\numberwithin{equation}{section}
 \DeclareMathOperator{\Ker}{Ker}
 \DeclareMathOperator{\Dom}{Dom}
\newtheorem{thm}{Theorem}[section]
\newtheorem{lemma}[thm]{Lemma}
\newtheorem{prop}[thm]{Proposition}
\newtheorem{cor}[thm]{Corollary}
\theoremstyle{definition}
\newtheorem{rem}[thm]{Remark}
\theoremstyle{definition}
\newtheorem{defn}[thm]{Definition}
\newcommand{\be}{\begin{eqnarray}}
\newcommand{\ee}{\end{eqnarray}}
\newcommand{\comment}[1]{}
\newcommand{\sbt}{\begin{picture}(-1,1)(-1,-3)\circle*{3}\end{picture}}
\begin{document}

%\title{Geometric quantization for proper moment maps}

\title[The second coefficient of the asymptotic
expansion]{The second coefficient of the asymptotic \\
expansion of the Bergman kernel\\
of the Hodge-Dolbeault operator}
\author{Wen Lu}

\address{Mathematisches Institut,
Universit\"at zu K\"oln, Weyertal 86-90, 50931 K\"oln, Germany}
\email{wlu@math.uni-koeln.de}
\date{\today}

\begin{abstract}
We calculate the second coefficient of the asymptotic expansion of
the Bergman kernel of the Hodge-Dolbeault operator associated to high powers of
a Hermitian line bundle with non-degenerate curvature,
using the method of formal power series developed by Ma and Marinescu.
\end{abstract}

\maketitle

\setcounter{section}{-1}
%%%%%%%%%%%%%%%%%%%%%%%%%%%%%%%%%%%%%%%%%%%
%%%%%%%%%%%%%%%%%%%%%%%%%%%%%%%%%%%%%%%%%%%

\section{Introduction} \label{s0}
%%%%%%%%%%%%%%%%%%%%%%%%%%%%%%%%%%%%%%%%%%%
The study of the asymptotic expansion of Bergman kernels has attracted much attention recently.
The existence of the diagonal asymptotic expansion of the Bergman kernel of high
tensor powers of a positive line bundle over a compact complex manifold was first established
by Tian \cite{Tian90}, Ruan \cite{Ruan98}, Catlin \cite{Catlin} and Zelditch \cite{Zelditch98}.
Tian \cite{Tian90},  followed by
Lu \cite{Lu00} and Wang \cite{Wang05}, derived explicit formulae for
several terms of the asymptotic expansion on the diagonal, via Tian's method of peak sections.

Using Bismut-Lebeau's analytic localization techniques, Dai, Liu and Ma \cite{Dai04} established the
full off-diagonal asymptotic expansion of the Bergman kernel of the Spin$^{c}$ Dirac operator associated
to high powers of a Hermitian line bundle with positive curvature in the general context of symplectic manifolds.
Moreover, they calculated the second coefficient of the expansion in the case of K\"ahler
manifolds. Later, Ma and Marinescu \cite{Ma08} studied the expansion of generalized Bergman kernels
associated to Bochner-Laplacians and developed a method of formal power series to compute the coefficients.
By the same method, Ma and Marinescu \cite[Th.\,2.1]{Ma06} computed the second coefficient
of the asymptotic expansion of the Bergman kernel of the Spin$^{c}$ Dirac operator
acting on high tensor powers of line bundles with positive curvature in the case of symplectic manifolds.
In the same vein, they computed in \cite{Ma12} the first three coefficients of the expansion
of the kernel of Toeplitz operators. We recommend the survey \cite{Ma10} for the expansion
of the kernel of Toeplitz operators in the context of geometric quantization.

In this paper we consider the Hodge-Dolbeault operator (which is a modified Dirac operator (see (\ref{1.3})))
associated to high powers of a Hermitian line bundle with non-degenerate curvature over a
compact complex manifold. For the non-degenerate curvature case, Ma and Marinescu \cite{Ma06}
obtained the expansion of the Bergman kernel of the
Spin$^{c}$ Dirac operator \cite[Th.\,1.7]{Ma06} on any symplectic manifold and
they computed the first two coefficients \cite[Th.\,2.1]{Ma06}
in the case of positive curvature.
Berman and Sj\"ostrand \cite{Berman} also studied the asymptotic expansion for Bergman
kernels for high powers of holomorphic line bundles with non-degenerate curvature over compact
complex manifolds.

This paper is a continuation of \cite{Ma06}. We compute the second coefficient of asymptotic
expansion of the Hodge-Dolbeault operator by means of the method in \cite{Ma06, Ma08}.
Compared to \cite{Ma06, Ma08}, the main feature in this paper
is that we perform our calculations for line bundles with non-degenerate curvature of arbitrary
signature.

Let $(X,J)$ be a compact connected complex manifold with complex structure $J$ and $\dim_{\mathbb{C}}X=n$.
Let $(L, h^{L})$ be a
holomorphic Hermitian line bundle on $X$, and let $\nabla^{L}$ be the Chern connection of $(L, h^{L})$ with the curvature  $R^{L}=(\nabla^{L})^{2}$.

{\bf {Our basic assumption}} is that $\omega:=\frac{\sqrt{-1}}{2\pi}R^{L}$ defines a symplectic form on
$X$.

The complex structure $J$ induces a splitting
$TX\otimes_{\mathbb{R}}\mathbb{C}=T^{(1,0)}X\oplus T^{(0,1)}X$,
where $T^{(1,0)}X$ and $T^{(0, 1)}X$ are the eigenbundles of $J$
corresponding to the eigenvalues $\sqrt{-1}$ and $-\sqrt{-1}$
respectively. Since the ${J}$-invariant bilinear form $\omega(\cdot, J\cdot)$ is non-degenerate on $TX$,
there exist $J$-invariant subbundles denoted $V, V^{\bot}\subset TX$ such that
\begin{align}
\omega(\cdot, J\cdot)\big|_{V}<0,\ \ \omega(\cdot, J\cdot)\big|_{V^{\bot}}>0
\end{align}
and $V, V^{\bot}$ are orthogonal with respect to $\omega(\cdot, J\cdot)$.
Equivalently, there exist subbundles $W, W^{\bot}\subset T^{(1, 0)}X$ such that
$W\oplus W^{\bot}=T^{(1, 0)}X$, $W, W^{\bot}$ orthogonal with respect to $\omega$ and
\begin{align}
R^{L}(u, \overline{u})<0,\ \textup{for}\ u\in W; \ R^{L}(u, \overline{u})>0,\ \textup{for}\ u\in W^{\bot}.
\end{align}
Set $\textup{rank} W=q$. Then the curvature $R^{L}$ is non-degenerate of signature $(q, n-q)$.
Now take the Riemannian metric $g^{TX}$ on $X$ to be
\begin{align}\label{1.12a}
g^{TX}:=-\omega(\cdot, J\cdot)\big|_{V}
\oplus\omega(\cdot, J\cdot)\big|_{V^{\bot}}.
\end{align}
Since $\omega$ is compatible with the complex structure $J$, then the metric $g^{TX}$ is
also compatible with $J$. Note that $(X, g^{TX})$ is not necessarily K\"ahler. Denote
by $\big\langle\cdot,\cdot\big\rangle$ the $\mathbb{C}$-bilinear form on $TX\otimes_{\mathbb{R}}\mathbb{C}$
induced by $g^{TX}$.

Let $(E, h^{E})$ be a holomorphic Hermitian
vector bundle on $X$, and let $\nabla^{E}$ be the Chern connection of $(E, h^{E})$ with curvature $R^{E}=(\nabla^{E})^{2}$.
Denote by $\Omega^{0, j}(X, L^{p}\otimes E)$ the space of smooth
$(0,j)$-forms over $X$ with values in $L^{p}\otimes E$ and set $\Omega^{0,\bullet}(X, L^{p}\otimes E)=\oplus^{n}_{j=0}
\Omega^{0, j}(X, L^{p}\otimes E)$. We still denote by $\big\langle\cdot, \cdot\big\rangle$ be the fibrewise metric
on $\Lambda(T^{\ast(0, 1)}X)\otimes L^{p}\otimes E$ induced by $g^{TX}, h^{L}$ and $h^{E}$.
Let $dv_{X}$ be the Riemannian volume form of
$(X, g^{TX})$. The $L^{2}$-scalar product on $\Omega^{0, \bullet}(X,
L^{p}\otimes E)$ is given
by
\begin{align}\label{1.2}
   \langle s_{1}, s_{2}\rangle=\int_{X}\big\langle s_{1}(x),
   s_{2}(x)\big\rangle dv_{X}(x).
   \end{align}
Let $\overline{\partial} ^{L^{p}\otimes E, \ast}$ be the formal adjoint of the
Dolbeault operator $\overline{\partial} ^{L^{p}\otimes E}$  with respect to the scalar
product (\ref{1.2}).
\begin{defn}
The Hodge-Dolbeault operator is defined by
\begin{align}\label{1.3}
 D_{p}=\sqrt{2}\big(\overline{\partial} ^{L^{p}\otimes E}+\overline{\partial} ^{L^{p}\otimes E,
 \ast}\big):\ \Omega^{0,\bullet}(X, L^{p}\otimes E)\rightarrow \Omega^{0,\bullet}(X, L^{p}\otimes E).
 \end{align}
 \end{defn}

\noindent
Then \begin{align}D^{2}_{p}=2\big(\overline{\partial} ^{L^{p}\otimes E}\overline{\partial} ^{L^{p}\otimes E,
 \ast}+\overline{\partial} ^{L^{p}\otimes E,
 \ast}\overline{\partial} ^{L^{p}\otimes E}\big),
 \end{align}
 which preserves the $\mathbb{Z}$-grading on $\Omega^{0,\bullet}(X, L^{p}\otimes E)$.

 By Hodge theory, we know that
 \begin{align}\label{1.4}
 \textrm{Ker} D^2_{p}|_{\Omega^{0,j}(X, L^{p}\otimes E)}\simeq H^{0,
 j}(X, L^p\otimes E),
 \end{align}
 where $H^{0, \bullet}(X, L^{p}\otimes E)$ denotes the Dolbeault cohomology group.
 By Andreotti-Grauert coarse vanishing theorem (see e.g. \cite[(1.29)]{Ma06}, \cite[(8.2.18)]{Ma07}) we obtain that
 for $p$ large enough,
 \begin{align}\label{1.19}
 H^{0,j}(X,L^{p}\otimes E)=0, \ \ \textrm{for} \ j\neq q.
 \end{align}
 It is a consequence of (\ref{1.4}) and (\ref{1.19}) that the kernel of $D^2_{p}$ is concentrated in
 degree $q$ for $p$ large enough. Let $P_{p}^{0,q}$ be the orthogonal projection from $\Omega^{0,q}(X,L^{p}\otimes E)$
 on $\Ker(D^2_p)$, and let $P_{p}^{0,q}(\cdot,\cdot)$ be its kernel with respect to $dv_X$. The
operator $P_{p}^{0,q}$ is smoothing, so the kernel $P_{p}^{0,q}(\cdot,\cdot)$ is smooth.

Let $I_{\det(\overline{W}^{\ast}) \otimes E}$ be the orthogonal
projection from $\Lambda (T^{\ast(0,1)}X)\otimes E$ onto
$\det(\overline{W}^{\ast}) \otimes E$. We denote by
$\big(\det(\overline{W}^{\ast})\big)^{\bot}$ the orthogonal
complement of $\det(\overline{W}^{\ast})$ in $\Lambda
(T^{\ast(0,1)}X)$. Denote by $\Theta$ the K\"ahler form associated to $g^{TX}$, i.e.,
\begin{align}\label{7.1}
\Theta(U, V)=\big\langle JU, V\big\rangle\ \  \textup{for}\ U, V\in TX.\end{align} The following diagonal asymptotic expansion of the kernel $P^{0,q}_{p}(\cdot,\cdot)$ was derived by
Ma and Marinescu, see \cite[Th.\,1.7]{Ma06}.

\begin{thm}\label{t1.1}
There exist smooth coefficients ${\bf b}_{r}(x)\in \textup{End}\big(\Lambda^{q}(T^{\ast(0,1)}X)\otimes E\big)_{x}$,
which are polynomials in $R^{TX}, R^{E} \ (\textup{and}\ d\Theta, R^{L})$ and
their derivatives of order $\leqslant 2r-2\ (\textup{resp}. \leqslant 2r-1, 2r)$ at $x$, such that
\begin{align}
{\bf b}_{0}=I_{\textup{det}(\overline{W}^{\ast})\otimes E}
\end{align}
and for any $k,l\in \mathbb{N}$, there exists $C_{k,l}>0$ with
\begin{align}\label{1.22}
\Big|P^{0,q}_{p}(x,x)-\sum^{k}_{r=0}{\bf b}_{r}(x)p^{n-r} \Big|_{C^{l}(X)}\leqslant C_{k,l}p^{n-k-1}
\end{align}
for any $p\in \mathbb{N}$. Moreover, the expansion is uniform in that for any
$k,l\in \mathbb{N}$, there is an integer $s$ such that if all data
$(g^{TX}, h^{L}, \nabla^{L},h^{E}, \nabla^{E})$ run over a set which are bounded in $C^{s}$ and with
$g^{TX}$ bounded below, there exists the constant $C_{k,l}$ independent of $g^{TX}$, and
the $C^{l}$-norm in $\textup{(\ref{1.22})}$ includes also the derivative on the parameters.
\end{thm}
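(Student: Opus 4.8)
The plan is to follow the analytic-localization-and-rescaling strategy of Dai--Liu--Ma \cite{Dai04} and Ma--Marinescu \cite{Ma06, Ma08}. The first step is a spectral gap estimate for $D_p^2$: combining the Bochner--Kodaira--Nakano identity for $\tfrac12 D_p^2$ (equivalently, the Lichnerowicz formula for the associated modified spin$^{c}$ Dirac operator) with the basic assumption that $\omega=\tfrac{\sqrt{-1}}{2\pi}R^{L}$ is symplectic of signature $(q,n-q)$, together with the vanishing \eqref{1.19}, one produces constants $\mu_0>0$ and $C>0$ so that for all large $p$ the spectrum of $D_p^2$ lies in $\{0\}\cup[2\mu_0p-C,+\infty)$, with $\ker D_p^2$ concentrated in bidegree $(0,q)$. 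When $g^{TX}$ is non-K\"ahler the identity carries extra torsion terms built from $d\Theta$, and one must check these are dominated by the curvature contribution $p\,R^{L}$ for $p$ large while bidegree $q$ remains isolated. In particular $P^{0,q}_p=\frac{1}{2\pi\sqrt{-1}}\oint_{\delta}(\lambda-D_p^2)^{-1}\,d\lambda$ for a fixed small circle $\delta$ around $0$.

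Next, finite propagation speed for the first-order formally self-adjoint operator $D_p$ shows that $P^{0,q}_p(x,x)$ and all of its derivatives depend, up to an $O(p^{-\infty})$ error, only on the restriction of $D_p^2$ to an arbitrarily small geodesic ball about a fixed point $x_0$; so the problem transplants to $\mathbb{R}^{2n}\cong T_{x_0}X$ with $L$ and $E$ trivialized along geodesics. Introducing normal coordinates, the rescaling $Z\mapsto Z/\sqrt{p}$, and the parameter $t=1/\sqrt{p}$, a unitary conjugation turns $\tfrac1p D_p^2$ into a second-order elliptic operator $\mathcal{L}_t$ on $C^\infty(\mathbb{R}^{2n},\Lambda^{0,\bullet}\otimes E)$ which admits a Taylor expansion $\mathcal{L}_t=\mathcal{L}_0+\sum_{r\geq1}t^r\mathcal{O}_r$ whose coefficients are polynomials in the jets at $x_0$ of $(g^{TX},h^{L},\nabla^{L},h^{E},\nabla^{E})$. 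The model operator $\mathcal{L}_0$ is a generalized harmonic oscillator determined by $\omega$; its $L^2$-kernel consists of Gaussian sections valued in $\det(\overline{W}^{\ast})\otimes E$ (so concentrated in bidegree $q$), and the kernel $\mathcal{P}$ of the orthogonal projection onto it is explicitly computable, with $\mathcal{P}(0,0)=I_{\det(\overline{W}^{\ast})\otimes E}$.

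Then I would carry out the method of formal power series. Expanding $(\lambda-\mathcal{L}_t)^{-1}=\sum_{r\geq0}t^r\mathcal{C}_r(\lambda)$, the operators $\mathcal{C}_r(\lambda)$ satisfy $\mathcal{C}_0(\lambda)=(\lambda-\mathcal{L}_0)^{-1}$ and $\mathcal{C}_r(\lambda)=\sum_{i=1}^{r}\mathcal{C}_0(\lambda)\,\mathcal{O}_i\,\mathcal{C}_{r-i}(\lambda)$; putting $\mathbf{F}_r=\frac{1}{2\pi\sqrt{-1}}\oint_{\delta}\mathcal{C}_r(\lambda)\,d\lambda$, one gets $\mathbf{b}_r(x_0)=\mathbf{F}_{2r}(0,0)$, while the odd-index $\mathbf{F}_r$ vanish on the diagonal by a parity argument, which is why the expansion in \eqref{1.22} runs over integer powers $p^{n-r}$. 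Tracking which jets of the data enter $\mathcal{O}_1,\dots,\mathcal{O}_{2r}$, hence $\mathbf{F}_{2r}$, yields exactly the claimed orders: $\leq 2r-2$ for $R^{TX}$ and $R^{E}$, $\leq 2r-1$ for $d\Theta$, and $\leq 2r$ for $R^{L}$; and $\mathbf{b}_0=\mathbf{F}_0(0,0)=\mathcal{P}(0,0)=I_{\det(\overline{W}^{\ast})\otimes E}$. To turn this formal identity into \eqref{1.22} with $C^{l}$-control, I would, as in \cite{Ma06, Ma08}, establish uniform Sobolev estimates for $\mathcal{L}_t$ on suitable weighted spaces (elliptic estimates with constants governed by the $C^{s}$-norms of the data, using the spectral gap to bound $(\lambda-\mathcal{L}_t)^{-1}$ along $\delta$) together with Gaussian off-diagonal decay for the rescaled kernels, and then invoke Taylor's formula with remainder in $t$; the uniformity statement follows since every constant produced depends only on finitely many derivatives of the data and a lower bound for $g^{TX}$.

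The main difficulty is the technical core just flagged: (i) proving the spectral gap in the non-K\"ahler, arbitrary-signature setting, where the torsion terms of the Bochner--Kodaira identity must be absorbed into $p\,R^{L}$ without spoiling the concentration in bidegree $q$; and (ii) setting up the uniform resolvent and Sobolev estimates for $\mathcal{L}_t$ on the noncompact model $\mathbb{R}^{2n}$, which genuinely require the weighted-space functional-analytic machinery of Ma--Marinescu rather than any shortcut. Granted those, extracting $\mathbf{b}_0$ and checking the polynomiality and derivative-order assertions amounts to bookkeeping on the Taylor coefficients $\mathcal{O}_r$.
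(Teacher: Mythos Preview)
Your outline is essentially the Ma--Marinescu strategy from \cite{Ma06, Ma08} and is correct in its broad strokes. Note, however, that the paper itself does \emph{not} prove Theorem~\ref{t1.1}: it is quoted verbatim as \cite[Th.\,1.7]{Ma06}, and the paper's contribution begins afterward with the explicit computation of $\mathbf{b}_1$. So there is no ``paper's own proof'' to compare against here; what you have written is a faithful sketch of the argument in the cited reference, with the spectral gap, finite-propagation localization, rescaling to the model operator $\mathcal{L}_0$, and resolvent expansion all in the right places.

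One small correction worth flagging: the paper's later computations (Theorem~\ref{t2.2} and \S\ref{s7}) do carry out the Taylor expansion $\mathcal{L}^t_2 = \mathcal{L}^0_2 + \sum_{r\geq 1} t^r \mathcal{O}_r$ explicitly in this non-K\"ahler, mixed-signature setting, and the model kernel is identified in (\ref{2.26}) as $\mathcal{P}^N(Z,Z') = P^N(Z,Z')\, I_{\det(\overline{W}^{\ast})\otimes E}$, confirming your description of $\mathbf{b}_0$. The spectral gap you mention is indeed established in \cite[Th.\,1.5]{Ma06} (and restated in the paper at (\ref{8.5}) for the covering-manifold application), so your concern about absorbing the torsion terms is handled there.
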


The purpose of this paper is to calculate the second coefficient ${\bf b}_{1}$ in the asymptotic expansion (\ref{1.22}).
The readers are referred to the monograph \cite{Ma07} for a comprehensive study of the asymptotic expansion
of Bergman kernels and the methods of calculation of the coefficients along the lines of the present paper.

To state our main result we continue to introduce more notations.
Let $\nabla^{TX}$ denote the Levi-Civita connection on $(TX, g^{TX})$,
and let $\nabla^{T^{(1, 0)}X}$ be the Chern
connection of $(T^{(1, 0)}X$, $h^{T^{(1, 0)}X})$,
where $h^{T^{(1, 0)}X}$ is the Hermitian metric on $T^{(1, 0)}X$
induced by $g^{TX}$ in (\ref{1.12a}). We denote by $R^{T^{(1,0)}X}$ the curvature of $\nabla^{T^{(1,0)}X}$.
Let ${\bf J}: TX\rightarrow TX$ be the almost complex structure defined by
\begin{align}\label{1.12}
 \omega(U, V)=g^{TX}({\bf J}U, V) \ \ \textrm{for} \ U, V\in TX.
 \end{align}
Then $J$ commutes with ${\bf J}$.
Let $v_{1}, \ldots, v_{n}$ an orthonormal frame of $(T^{(1, 0)}X, h^{T^{(1, 0)}X})$
such that the subbundle $W$ is spanned by $v_{1}, \ldots, v_{q}$, and let
$v^{1}, \ldots, v^{n}$ be the dual frame.  It is a consequence of (\ref{1.12a}) and (\ref{1.12}) that
\begin{align}\label{1.13}
 {\bf J}v_{j}=-\sqrt{-1}v_{j},\ \textrm{for}\ j\leqslant q;\ \ \  {\bf J}v_{j}=\sqrt{-1}v_{j},\
 \textrm{for}\ j\geqslant q+1.
\end{align} Let $T_{\bf J}^{(1, 0)}X$ and $T_{\bf J}^{(0, 1)}X$ be the eigenbundles of ${\bf J}$ corresponding
to the eignevalues $\sqrt{-1}$ and $-\sqrt{-1}$ respectively. Set $u_{j}=\overline{v}_{j}$ if $j\leqslant q$ and
$u_{j}=v_{j}$ otherwise. Then $u_{1}, \ldots, u_{n}$ forms an orthonormal frame of the subbundle $T_{\bf J}^{(1, 0)}X$
and \begin{align}\label{3.2}
\omega=\sqrt{-1}\sum^{n}_{j=1}u^{j}\wedge \overline{u}^{j}.
\end{align}

Let $\nabla^{B}$ be the Bismut connection (see (\ref{1.12b})) on $\Lambda (T^{\ast(0, 1)}X)$
whose curvature is denoted by $R^{B}$. Denote by $\nabla^{X}\psi, \nabla^{B}\psi$ the covariant derivative of a tensor
$\psi$ with respect to $\nabla^{TX}$ and $\nabla^{B}$, respectively.
Let $e_{1}, \ldots, e_{2n}$ denote an orthonormal frame of $(TX, g^{TX})$, set
\begin{align}\label{1.12g}
\big|\nabla^{X}{\bf J}\big|^{2}=\sum_{i, j=1}^{2n}\big|(\nabla_{e_{i}}^{X}{\bf J})e_{j}\big|^{2},\ \
\big|\nabla^{B}{\bf J}\big|^{2}=\sum_{i, j=1}^{2n}\big|(\nabla_{e_{i}}^{B}{\bf J})e_{j}\big|^{2};
\end{align}
We denote by $T_{as}$ the anti-symmetrization of the torsion tensor of the connection induced by
the Chern connection $\nabla^{T^{(1,0)}X}$ on $TX$ (cf. (\ref{1.5}), (\ref{1.6})).
Let $\Lambda_{\omega}$ be the contraction operator with the form $\omega$.
Let $P$ be the smooth $2$-form over $X$ defined by
\begin{align}
P(U, V)=&\frac{1}{2}\big\langle R^{B}(u_{j}, \overline{u}_{j})U, V\big\rangle
\nonumber \\&+\frac{1}{4}(dT_{as})(u_{j}, \overline{u}_{j}, U, V)+
\Big(\frac{1}{2}\textup{Tr}\big[R^{T^{(1, 0)}X}\big]+R^{E}\Big)(U, V).
\end{align}
The summation convention of summing over repeated indices is used
here and throughout in this paper. Note that we have (cf. \cite[(1.2.51)]{Ma07}):
\begin{align}
T_{as}=-\sqrt{-1}\big(\partial-\overline{\partial}\big)\Theta,\ \
dT_{as}=2\sqrt{-1} \partial\overline{\partial}\Theta.
\end{align}

The main result in this paper is as follows.

\begin{thm}\label{t2.1} Let $X$ be a compact complex manifold and $(L,h^{L})$ be a holomorphic
Hermitian line bundle whose curvature is non-degenerate of signature
$(q,n-q)$. Let $(E,h^{E})$ be a holomorphic Hermitian vector bundle.
Endow $\Omega^{0,\bullet}(X, L^{p}\otimes E)$ with the $L^2$-scalar product induced by
the Riemannian metric $g^{TX}$ defined by $\textup{(\ref{1.12a})}$ and by $h^{L},h^{E}$. Then the
coefficient ${\bf b}_1$ from the expansion $\textup{(\ref{1.22})}$ of the Bergman kernel
$P^{0,q}_p(\cdot, \cdot)$ on $(0,q)$-forms is given by
\begin{align}\label{2.1}
\pi {\bf b}_{1}(x)=&
\bigg[\frac{1}{2} R^{E}(u_{j}, \overline{u}_{j})+\frac{1}{4}\textup{Tr}
\big[R^{T^{(1,0)}X}\big](u_{j}, \overline{u}_{j})
\nonumber\\&
-\frac{1}{16}\Lambda_{\omega}\big(d(\Lambda_{\omega}T_{as})\big)-\frac{1}{144}\big|(\nabla_{u_{i}}^{B}{\bf J})u_{j}\big|^{2}
\bigg]I_{\det({\overline W}^{\ast})\otimes E}
\\&
+\frac{1}{72}\sum^{q}_{i,j=1}\sum^{n}_{k,l=q+1}
\Big\langle(\nabla_{u_{m}}^{B}{\bf J})u_{j}, u_{k}\Big\rangle \Big\langle(\nabla_{\overline{u}_{m}}^{B}{\bf J})
\overline{u}_{i}, \overline{u}_{l}\Big\rangle \overline{u}^{l}\wedge i_{u_{i}}
I_{\det(\overline{W}^{\ast})\otimes E}u^{j}\wedge i_{\overline{u}_{k}}
\nonumber \\&-\frac{1}{4}
\sum_{j=1}^{q}\sum_{k=q+1}^{n}\Big[P(\overline{u}_{j}, \overline{u}_{k})
-\frac{\sqrt{-1}}{3}\big\langle(\nabla^{B}\nabla^{B}{\bf J})_{(u_{i}, \overline{u}_{i})}\overline{u}_{j}, \overline{u}_{k}\big\rangle \Big]\overline{u}^{k}\wedge
 i_{u_{j}}I_{\det(\overline{W}^{\ast})\otimes E}
\nonumber  \\&+
\frac{1}{8}\sum_{i,j=1}^{q}\sum_{k,l=q+1}^{n}\Big[\frac{1}{8}(dT_{as})(\overline{u}_{i}, \overline{u}_{j}, \overline{u}_{k}, \overline{u}_{l})-\frac{1}{15}\big\langle (\nabla_{u_{m}}^{B}{\bf J})\overline{u}_{i}, \overline{u}_{l}\big\rangle\cdot
\big\langle (\nabla_{\overline{u}_{m}}^{B}{\bf J})\overline{u}_{j}, \overline{u}_{k}\big\rangle
\nonumber \\& \ \ \ \ \ \ \ \ \ \ \ \ \ \ \ \ \ \ \
-\frac{1}{10}\big\langle (\nabla_{\overline{u}_{m}}^{B}{\bf J})\overline{u}_{i}, \overline{u}_{l}\big\rangle\cdot
\big\langle (\nabla_{u_{m}}^{B}{\bf J})\overline{u}_{j}, \overline{u}_{k}\big\rangle \Big]
\overline{u}^{k}\wedge \overline{u}^{l}
\wedge i_{u_{i}} i_{u_{j}}I_{\det(\overline{W}^{\ast})\otimes E}
\nonumber \\&-\frac{1}{4}
\sum_{j=1}^{q}\sum_{k=q+1}^{n}\Big[P(u_{k}, u_{j})
-\frac{\sqrt{-1}}{3}\big\langle(\nabla^{B}\nabla^{B}{\bf J})_{(\overline{u}_{i}, u_{i})}u_{k}, u_{j}\big\rangle\Big]I_{\det(\overline{W}^{\ast})\otimes E}
u^{j}\wedge i_{\overline{u}_{k}}
\nonumber \\&+
\frac{1}{8}\sum_{i,j=1}^{q}\sum_{k,l=q+1}^{n}
\Big[\frac{1}{8}(dT_{as})(u_{i}, u_{j}, u_{k}, u_{l})
-\frac{1}{15}\big\langle (\nabla_{\overline{u}_{m}}^{B}{\bf J})u_{i}, u_{l}\big\rangle\cdot
\big\langle (\nabla_{u_{m}}^{B}{\bf J})u_{j}, u_{k}\big\rangle
\nonumber \\& \ \ \ \ \ \ \ \ \ \ \ \ \ \ \ \ \ \ \
-\frac{1}{10}\big\langle (\nabla_{u_{m}}^{B}{\bf J})u_{i}, u_{l}\big\rangle\cdot
\big\langle (\nabla_{\overline{u}_{m}}^{B}{\bf J})u_{j}, u_{k}\big\rangle\Big]
I_{\det(\overline{W}^{\ast})\otimes E} u^{i}\wedge u^{j}
\wedge i_{\overline{u}_{l}} i_{\overline{u}_{k}}.\nonumber
\end{align}
In particular,
\begin{align}\begin{split}\label{2.1b}
& \pi\cdot \textup{Tr}|_{\Lambda^{q}(T^{\ast(0,1)}X)}\big[{\bf b}_{1}(x)\big]
\\=&\frac{1}{2}R^{E}(u_{j}, \overline{u}_{j})+\frac{1}{4}
\textup{Tr}\big[R^{T^{(1,0)}X}\big](u_{j}, \overline{u}_{j})
-\frac{1}{16}\Lambda_{\omega}\big(d(\Lambda_{\omega}T_{as})\big).
\end{split}\end{align}
\end{thm}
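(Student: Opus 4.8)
\emph{Approach.} The plan is to use the formal power series method of Ma and Marinescu, following \cite{Ma06,Ma08} and carrying along the torsion terms that distinguish the Hodge--Dolbeault operator $D_p$ from the $\mathrm{Spin}^{c}$ Dirac operator whose second coefficient was computed in \cite[Th.\,2.1]{Ma06}. By the spectral gap for $D_p^{2}$ and finite propagation speed (\cite[\S1]{Ma06}) the kernel $P^{0,q}_p$ is concentrated near the diagonal, so $\mathbf b_1(x_0)$ is determined by the geometry in an arbitrarily small neighbourhood of a fixed $x_0\in X$. Working in normal coordinates $Z\in T_{x_0}X\cong\R^{2n}$, with $L$ and $E$ trivialized by their Chern connections along radial geodesics, the rescaling $Z\mapsto Z/\sqrt p$ together with the usual conjugation turns $D_p^{2}=2\big(\overline\partial\,\overline\partial^{\ast}+\overline\partial^{\ast}\overline\partial\big)$ into a family $\mL_t$, $t=1/\sqrt p$, with $\mL_t=\mL_0+\sum_{r\geqslant1}t^{r}\cO_r$, the $\cO_r$ being obtained by Taylor-expanding about $x_0$ the coefficients of $D_p^2$ (the metric, the Chern connections, the Bismut connection, and Clifford multiplication). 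Here $\mL_0$ is the model harmonic oscillator of \cite[\S1]{Ma06} for signature $(q,n-q)$; its projection $\mathscr P$ onto $\Ker\mL_0$ is an explicit Gaussian with values in $\det(\overline W^{\ast})\otimes E_{x_0}$, giving $\mathbf b_0(x_0)=\mathscr P(0,0)=I_{\det(\overline W^{\ast})\otimes E}$. The operator $\cO_1$ is first order with coefficients linear in the $1$-jet of $(g^{TX},\nabla^{L},\nabla^{E})$ --- equivalently in $d\Theta$, i.e.\ in $T_{as}$ and $\nabla^{B}\mathbf J$ --- while $\cO_2$ carries $R^{TX},R^{E},R^{L}$, the Hessian of $g^{TX}$, $dT_{as}$ and quadratic expressions in $\nabla^{B}\mathbf J$. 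The one structural difference from \cite{Ma06} is a zeroth-order summand: $D_p$ is the $\mathrm{Spin}^{c}$ Dirac operator modified by the Clifford action of the torsion $T_{as}=-\sqrt{-1}(\partial-\overline\partial)\Theta$ (see \cite{Ma06,Ma07}), and propagating this through the rescaling is exactly what feeds the $T_{as}$- and $dT_{as}$-terms into $\cO_1,\cO_2$.

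\emph{The recursion.} Expanding the resolvent $(\lambda-\mL_t)^{-1}$ in powers of $t$ and integrating over a small circle around $0$, as in \cite[\S4]{Ma08} and \cite[\S2]{Ma06}, produces the coefficients of the rescaled Bergman kernel. Since \eqref{1.22} runs in integer powers of $p$, the odd-in-$t$ coefficients vanish; this is the parity identity $\mathscr P\,\cO_1\,\mathscr P=0$, and using it the recursion gives, up to the sign conventions of \cite{Ma06,Ma08}, a formula of the shape
\begin{align*}
\pi\,\mathbf b_1(x_0)=\pi\big(\mathscr P\,\cO_2\,\mathscr P-\mathscr P\,\cO_1\,\mL_0^{-1}\,\mathscr P^{\bot}\,\cO_1\,\mathscr P\big)(0,0),
\end{align*}
where $\mathscr P^{\bot}=\Id-\mathscr P$, $\mL_0^{-1}$ is the inverse of $\mL_0$ on $\ran(\mathscr P^{\bot})$, and the overall $\pi$ reflects the normalization $\omega=\tfrac{\sqrt{-1}}{2\pi}R^{L}$ built into $\mL_0$ and $\mathscr P$.

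\emph{Evaluation and trace.} One then evaluates the right-hand side by applying the creation/annihilation operators appearing in $\cO_1,\cO_2$ to the Gaussian $\mathscr P(\cdot,0)$ and using the explicit action of $\mL_0^{-1}$ on polynomial multiples of the Gaussian on each weight space of the mixed-signature oscillator. Sorting the output by bidegree in $T^{\ast(0,1)}X$ and by whether the frame indices lie in $W$ (indices $\leqslant q$) or $W^{\bot}$ (indices $>q$) yields the successive groups of terms in \eqref{2.1}; the rational constants $\tfrac1{144},\tfrac1{72},\tfrac1{15},\tfrac1{10},\tfrac18,\dots$ are precisely the numbers produced by these harmonic-oscillator moment integrals, and the $\det(\overline W^{\ast})\otimes E$-valued scalar part $\tfrac12R^{E}(u_j,\overline u_j)+\tfrac14\textup{Tr}[R^{T^{(1,0)}X}](u_j,\overline u_j)-\tfrac1{16}\Lambda_\omega d(\Lambda_\omega T_{as})$ is the diagonal part of $\mathscr P\,\cO_2\,\mathscr P$. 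Finally \eqref{2.1b} follows by taking the fibrewise trace $\textup{Tr}|_{\Lambda^{q}(T^{\ast(0,1)}X)}$ of \eqref{2.1}: the $P$-terms and the degree-$(2,2)$ wedge--contraction operators are off-diagonal, hence traceless, while the remaining $\nabla^{B}\mathbf J$-contributions cancel the term $-\tfrac1{144}|\nabla^{B}\mathbf J|^{2}$, so that only the scalar part survives, using $\textup{Tr}|_{\Lambda^{q}}[I_{\det(\overline W^{\ast})\otimes E}]=\Id_{E}$.

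\emph{Main obstacle.} The real work lies in deriving $\cO_1$ and especially $\cO_2$ with all curvature and torsion terms correctly identified --- the second-order Taylor expansion of $D_p^{2}$ expressed through the Bismut connection --- and then in the long operator-algebra computation of $\mathscr P\,\cO_2\,\mathscr P$ and $\mathscr P\,\cO_1\,\mL_0^{-1}\,\cO_1\,\mathscr P$. The genuinely new difficulty compared with the positive-curvature case of \cite{Ma06} is that in signature $(q,n-q)$ the partial inverse $\mL_0^{-1}$ acts on a model oscillator mixing the $W$ and $W^{\bot}$ modes, so this mixed index structure has to be tracked carefully throughout; the conceptual framework, however, is entirely that of \cite{Ma06,Ma08,Ma07}.
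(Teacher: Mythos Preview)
Your overall architecture is the paper's: localize by the spectral gap, rescale, expand $\mathcal{L}^{t}_{2}=\mathcal{L}^{0}_{2}+\sum_{r\geqslant1} t^{r}\mathcal{O}_{r}$, and read $\mathbf b_{1}$ off the $t^{2}$--coefficient of the rescaled projection. The genuine gap is the recursion formula you write down. Carrying out the contour integral $\tfrac{1}{2\pi i}\oint(\lambda-\mathcal{L}^{0}_{2})^{-1}\mathcal{O}_{j_{1}}(\lambda-\mathcal{L}^{0}_{2})^{-1}\cdots\,d\lambda$ with $(\lambda-\mathcal{L}^{0}_{2})^{-1}=\lambda^{-1}\mathcal{P}^{N}+(\lambda-\mathcal{L}^{0}_{2})^{-1}\mathcal{P}^{N^{\perp}}$ and using $\mathcal{P}^{N}\mathcal{O}_{1}\mathcal{P}^{N}=0$ yields at order $t^{2}$ the six--term operator $F_{2}$ of \eqref{2.31b}, and then $\mathbf b_{1}(x_{0})=I_{\mathbf E}\,F_{2}(0,0)\,I_{\mathbf E}$ with $I_{\mathbf E}$ the projection onto $\Lambda^{q}(T^{\ast(0,1)}X)\otimes E$ (not onto $\det(\overline W^{\ast})\otimes E$). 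In particular the residue of $\lambda^{-2}$ at $0$ vanishes, so there is \emph{no} term $\mathcal{P}^{N}\mathcal{O}_{2}\mathcal{P}^{N}$ at all; neither of the two summands you keep appears in $F_{2}$, and this is a structural discrepancy, not a sign convention.

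The error is consequential because $\mathbf b_{1}\in\End\big(\Lambda^{q}(T^{\ast(0,1)}X)\otimes E\big)$ is genuinely non--scalar here. Your expression is sandwiched on both sides by $\mathcal{P}^{N}$, hence after evaluation at $(0,0)$ it can only produce multiples of $I_{\det(\overline W^{\ast})\otimes E}$. All the off--diagonal blocks of \eqref{2.1} --- the terms $\overline u^{k}\wedge i_{u_{j}}I_{\det(\overline W^{\ast})\otimes E}$ and their adjoints $I_{\det(\overline W^{\ast})\otimes E}\,u^{j}\wedge i_{\overline u_{k}}$, the degree--two pieces $\overline u^{k}\wedge\overline u^{l}\wedge i_{u_{i}}i_{u_{j}}I_{\det(\overline W^{\ast})\otimes E}$, and the mixed block $\overline u^{l}\wedge i_{u_{i}}I_{\det(\overline W^{\ast})\otimes E}\,u^{j}\wedge i_{\overline u_{k}}$ --- come precisely from the four summands of $F_{2}$ that are \emph{not} flanked by $\mathcal{P}^{N}$ on both sides (cf.\ \eqref{2.54}, \eqref{2.60}, \eqref{2.100c} and their adjoints). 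Even the scalar part is not $\mathcal{P}^{N}\mathcal{O}_{2}\mathcal{P}^{N}(0,0)$: in the paper it is assembled from $-(\mathcal{L}^{0}_{2})^{-1}\mathcal{P}^{N^{\perp}}\mathcal{O}_{2}\mathcal{P}^{N}(0,0)$, from the $\mathcal{O}_{1}\mathcal{O}_{1}$ terms \eqref{2.53}, \eqref{2.60}, and their adjoints, and then simplified through Proposition~\ref{t2.4c} relating $R^{B}$ and $R^{TX}$. The remainder of your outline (identification of $\mathcal{O}_{1},\mathcal{O}_{2}$ via the Bismut connection, creation/annihilation calculus on the mixed--signature model, the trace argument for \eqref{2.1b}) is on the right track, but it has to be fed the correct $F_{2}$.
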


\noindent Note that  Hsiao \cite{Hsiao12} independently calculated by other methods the coefficient ${\bf b}_{1}$
for the trivial line bundle with mixed curvature over $\mathbb{C}^{n}$ endowed with the Euclidean metric.

By integrating $P_{p}^{0, q}(x, x)$ over $X$ we obtain $\dim H^{0, q}(X, L^{p}\otimes E)$
which by (\ref{1.19}) equals the Euler characteristic of $L^{p}\otimes E$ for large $p$. This is in turn given by
the Riemann-Roch-Hirzebruch formula (\ref{1.19a}). Thus, by integration of the asymptotic expansion (\ref{1.22})
we can compare coefficients with the Riemamm-Roch-Hirzebruch formula and we can check our formulas for ${\bf b}_{1}(x)$.
This will be carried out in $\S {\ref{s6}}$.

Since the explicit formula (\ref{2.1}) seems rather lengthy, it is worthwhile to show
what it reduces to in various interesting special cases.
Denote by $R^{TX}, r^{X}$ the curvature and scalar curvature of the Levi-Civita connection $\nabla^{TX}$, respectively.
\begin{cor}\label{t1.2}
If $(X, g^{TX}, J)$ is K\"ahler, then we have
\begin{align}\label{1.12e}
\pi {\bf b}_{1}(x)=&
\bigg[\frac{1}{2}R^{E}(u_{j}, \overline{u}_{j})+\frac{1}{4}\textup{Tr}
\big[R^{T^{(1,0)}X}\big](u_{j}, \overline{u}_{j})-\frac{1}{144}\big|(\nabla_{u_{i}}^{X}{\bf J})u_{j}\big|^{2}
\bigg]I_{\det({\overline W}^{\ast})\otimes E}
\nonumber \\&
+\frac{1}{72}\sum^{q}_{i,j=1}\sum^{n}_{k,l=q+1}
\Big\langle(\nabla_{u_{m}}^{X}{\bf J})u_{j}, u_{k}\Big\rangle \Big\langle(\nabla_{\overline{u}_{m}}^{X}{\bf J})
\overline{u}_{i}, \overline{u}_{l}\Big\rangle \overline{u}^{l}\wedge i_{u_{i}}
I_{\det(\overline{W}^{\ast})\otimes E}u^{j}\wedge i_{\overline{u}_{k}}
\nonumber \\&-\frac{1}{4}
\sum_{j=1}^{q}\sum_{k=q+1}^{n}\Big[\big(\frac{1}{2}\textup{Tr}\big[R^{T^{(1, 0)}X}\big]+R^{E}\big)(\overline{u}_{j}, \overline{u}_{k})
 \\&\ \ \ \ \ \ \ \ \ \ \ \ \ \ \ \ \ \ \
-\frac{1}{6}\big\langle R^{TX}(u_{i}, \overline{u}_{i})\overline{u}_{j}, \overline{u}_{k}\big\rangle\Big]
\overline{v}^{k}\wedge
 i_{\overline{v}_{j}}I_{\det(\overline{W}^{\ast})\otimes E}
\nonumber  \\&-\frac{1}{4}
\sum_{j=1}^{q}\sum_{k=q+1}^{n}\Big[\big(\frac{1}{2}\textup{Tr}\big[R^{T^{(1, 0)}X}\big]+R^{E}\big)(u_{k}, u_{j})
\nonumber \\&\ \ \ \ \ \ \ \ \ \ \ \ \ \ \ \ \ \ \
-\frac{1}{6}\big\langle R^{TX}(u_{i}, \overline{u}_{i})u_{k}, u_{j}\big\rangle\Big]
I_{\det(\overline{W}^{\ast})\otimes E}
\overline{v}^{j}\wedge i_{\overline{v}_{k}}. \nonumber
\end{align}
Taking the trace over $\Lambda^{q}(T^{\ast(0, 1)}X)$ yields
\begin{align}
\pi\cdot \textup{Tr}|_{\Lambda^{q}(T^{\ast(0,1)}X)}\big[{\bf b}_{1}(x)\big]
=\frac{1}{2}R^{E}(u_{j}, \overline{u}_{j})+\frac{1}{4}
\textup{Tr}\big[R^{T^{(1,0)}X}\big](u_{j}, \overline{u}_{j}).
\end{align}
\end{cor}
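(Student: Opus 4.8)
The plan is to derive (\ref{1.12e}) by specializing the general formula (\ref{2.1}) of Theorem \ref{t2.1} to the K\"ahler case, and then to read off the trace identity from the general formula (\ref{2.1b}). First I would record the simplifications forced by the K\"ahler condition. Since $(X,g^{TX},J)$ is K\"ahler we have $d\Theta=0$, so by the identities $T_{as}=-\sqrt{-1}(\partial-\overline\partial)\Theta$ and $dT_{as}=2\sqrt{-1}\,\partial\overline\partial\Theta$ recorded before Theorem \ref{t2.1} we get $T_{as}=0$ and $dT_{as}=0$. In particular the term $-\frac{1}{16}\Lambda_{\omega}(d(\Lambda_{\omega}T_{as}))$ vanishes and every explicit $dT_{as}$-term in (\ref{2.1}) disappears. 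Moreover the Bismut connection $\nabla^{B}$, whose torsion is governed by $T_{as}$, reduces to the Levi-Civita connection $\nabla^{X}$, so that $R^{B}=R^{TX}$ and one may replace $\nabla^{B}$ by $\nabla^{X}$ throughout; correspondingly the $2$-form $P$ collapses to $P(U,V)=\frac12\langle R^{TX}(u_i,\overline u_i)U,V\rangle+(\frac12\textup{Tr}[R^{T^{(1,0)}X}]+R^{E})(U,V)$.

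With these substitutions the bracketed scalar term and the $\frac{1}{72}$-term of (\ref{2.1}) pass verbatim (with $\nabla^{B}$ replaced by $\nabla^{X}$) to the first two lines of (\ref{1.12e}). The substantive step concerns the two rank-one terms of (\ref{2.1}), those carrying $\overline u^{k}\wedge i_{u_j}$ and $u^{j}\wedge i_{\overline u_k}$. Here I would combine the reduced $P$ with the Hessian correction $-\frac{\sqrt{-1}}{3}\langle(\nabla^{B}\nabla^{B}{\bf J})_{(u_i,\overline u_i)}\,\cdot\,,\cdot\rangle$ via the Ricci identity for the second covariant derivative of ${\bf J}$. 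Contracting $(\nabla^{X}\nabla^{X}{\bf J})_{(u_i,\overline u_i)}$ over $i$ and using $\nabla^{X}J=0$ together with the fact that ${\bf J}$ commutes with $J$ should reduce this contraction to the curvature commutator $[R^{TX}(u_i,\overline u_i),{\bf J}]$. Evaluating on $\overline u_j$ (with $j\le q$, so $\overline u_j\in W$) and pairing against $\overline u_k$ (with $k\ge q+1$, so $\overline u_k\in\overline{W^{\bot}}$) by means of (\ref{1.13}) — which gives ${\bf J}\overline u_j=-\sqrt{-1}\,\overline u_j$ and ${\bf J}\overline u_k=-\sqrt{-1}\,\overline u_k$ — together with the skew-adjointness of ${\bf J}$, I expect $\langle(\nabla^{X}\nabla^{X}{\bf J})_{(u_i,\overline u_i)}\overline u_j,\overline u_k\rangle=-2\sqrt{-1}\,\langle R^{TX}(u_i,\overline u_i)\overline u_j,\overline u_k\rangle$. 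Then the term $\frac12\langle R^{TX}(u_i,\overline u_i)\overline u_j,\overline u_k\rangle$ from $P$ and the $-\frac{\sqrt{-1}}{3}\cdot(-2\sqrt{-1})=-\frac23$ multiple of $\langle R^{TX}(u_i,\overline u_i)\overline u_j,\overline u_k\rangle$ from the Hessian correction add up to $(\frac12-\frac23)\langle R^{TX}(u_i,\overline u_i)\overline u_j,\overline u_k\rangle=-\frac16\langle R^{TX}(u_i,\overline u_i)\overline u_j,\overline u_k\rangle$, exactly the coefficient in (\ref{1.12e}); the conjugate term $u^{j}\wedge i_{\overline u_k}$ is handled in the same way.

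It then remains to show that the two rank-two terms of (\ref{2.1}) — those carrying $\overline u^{k}\wedge\overline u^{l}\wedge i_{u_i}i_{u_j}$ and $u^{i}\wedge u^{j}\wedge i_{\overline u_l}i_{\overline u_k}$ — vanish in the K\"ahler case, as (\ref{1.12e}) has no such terms. Their $dT_{as}$-pieces vanish as above, so the point is that the antisymmetrization in $(i,j)$ and in $(k,l)$ forced by the wedge and contraction factors annihilates the products $\langle(\nabla^{X}_{u_m}{\bf J})\overline u_i,\overline u_l\rangle\,\langle(\nabla^{X}_{\overline u_m}{\bf J})\overline u_j,\overline u_k\rangle$ and the companion product with the two derivative directions interchanged. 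For this I would write ${\bf J}=J\epsilon$, where $\epsilon=-1$ on $W$ and $\epsilon=+1$ on $W^{\bot}$, so that $\nabla^{X}{\bf J}=J\,\nabla^{X}\epsilon$ and hence $\langle(\nabla^{X}_{u_m}{\bf J})\overline u_i,\overline u_l\rangle$ is, up to a constant, the second fundamental form $\langle\nabla^{X}_{u_m}v_i,\overline v_l\rangle$ of $W\subset T^{(1,0)}X$ (recall $\overline u_i=v_i$ for $i\le q$ and $\overline u_l=\overline v_l$ for $l\ge q+1$). Metric compatibility, $\langle v_i,\overline v_l\rangle=\delta_{il}=0$ for $i\le q<l$, relates the derivative along $u_m$ to the conjugate derivative, and this symmetry (together with torsion-freeness of $\nabla^{X}$) is what I expect to force the doubly antisymmetrized product to vanish.

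Finally, the trace identity is immediate from the general formula (\ref{2.1b}): the K\"ahler condition $T_{as}=0$ kills the term $-\frac{1}{16}\Lambda_{\omega}(d(\Lambda_{\omega}T_{as}))$, leaving $\pi\,\textup{Tr}|_{\Lambda^{q}(T^{\ast(0,1)}X)}[{\bf b}_1]=\frac12 R^{E}(u_j,\overline u_j)+\frac14\textup{Tr}[R^{T^{(1,0)}X}](u_j,\overline u_j)$. As an internal check one may instead trace (\ref{1.12e}) directly: the two rank-one terms trace to zero since $\textup{Tr}[\overline v^{k}\wedge i_{\overline v_j}I_{\det(\overline W^{\ast})\otimes E}]=0$ for $j\le q<k$, while the $-\frac{1}{144}\big|(\nabla^{X}_{u_i}{\bf J})u_j\big|^{2}$ in the bracket cancels exactly against the trace of the $\frac{1}{72}$-term, the two norms being equal by the second fundamental form symmetry of the previous paragraph. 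The main obstacle is precisely the two ${\bf J}$-Hessian identities — the reduction to $[R^{TX}(u_i,\overline u_i),{\bf J}]$ and the antisymmetric cancellation — where the K\"ahler hypotheses ($\nabla^{X}J=0$, torsion-freeness, and integrability of $J$) do the real work.
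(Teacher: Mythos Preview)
Your overall plan — specialize (\ref{2.1}) using $T_{as}=0$, $\nabla^{B}=\nabla^{X}$, $R^{B}=R^{TX}$ — is the same as the paper's, and your arithmetic for the commutator (giving the factor $-2\sqrt{-1}$ and hence the $-\tfrac{1}{6}$) is correct. But the mechanism you propose for the two nontrivial reductions does not work, and the paper uses a different (and simpler) device in both places.

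For the rank-one terms you claim that ``$\nabla^{X}J=0$ together with $J{\bf J}={\bf J}J$'' reduces $\sum_i(\nabla^{X}\nabla^{X}{\bf J})_{(u_i,\overline u_i)}$ to the commutator $[R^{TX}(u_i,\overline u_i),{\bf J}]$. The Ricci identity (\ref{1.15}) only gives the \emph{difference} $(\nabla^{X}\nabla^{X}{\bf J})_{(u_i,\overline u_i)}-(\nabla^{X}\nabla^{X}{\bf J})_{(\overline u_i,u_i)}=[R^{TX}(u_i,\overline u_i),{\bf J}]$, so you must show $\big\langle(\nabla^{X}\nabla^{X}{\bf J})_{(\overline u_i,u_i)}\overline u_j,\overline u_k\big\rangle=0$. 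Knowing that $(\nabla^{X}\nabla^{X}{\bf J})$ commutes with $J$ (which is all your hypotheses yield) does \emph{not} kill this pairing: it only says the operator preserves $T^{(1,0)}X$ and $T^{(0,1)}X$, and $\overline u_j=v_j\in T^{(1,0)}X$, $\overline u_k=\overline v_k\in T^{(0,1)}X$ pair nontrivially. The paper obtains the needed vanishing from the ${\bf J}$-type decomposition (\ref{2.14}) (a consequence of $d\omega=0$ and ${\bf J}^2=-1$, not of the K\"ahler condition) together with (\ref{1.14c}) and (\ref{1.16}); see (\ref{7.5})--(\ref{7.6}).

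For the rank-two terms, no antisymmetrization or second-fundamental-form argument is needed (and the one you sketch is not a proof). In the K\"ahler case each of the products contains a factor of the form $\big\langle(\nabla^{X}_{u_m}{\bf J})\overline u_{\bullet},\overline u_{\bullet}\big\rangle$, which has mixed ${\bf J}$-type $(1,0)\otimes(0,1)\otimes(0,1)$ and therefore vanishes directly by (\ref{2.14}); the whole block drops out for this reason. The same remark disposes of your ``internal check'' of the trace: the $\tfrac{1}{72}$-term is not merely cancelled under the trace, its coefficient $\big\langle(\nabla^{X}_{u_m}{\bf J})u_j,u_k\big\rangle\,\big\langle(\nabla^{X}_{\overline u_m}{\bf J})\overline u_i,\overline u_l\big\rangle$ survives in (\ref{1.12e}) only because both factors have pure ${\bf J}$-type, and its trace equals $\tfrac{1}{72}\sum|\langle(\nabla^{X}_{u_m}{\bf J})u_j,u_k\rangle|^{2}$, which by (\ref{2.14}) is exactly $\tfrac{1}{144}|(\nabla^{X}_{u_i}{\bf J})u_j|^{2}$.
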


\begin{cor}\label{t1.3}
If $q=0$, then it follows $\textup{(\ref{1.12a})}$ and $\textup{(\ref{1.12})}$ that $(X, g^{TX}, J)$ is K\"ahler. Then
the formula $\textup{(\ref{2.1})}$ reduces to the known one \cite[(4.1.8)]{Ma07} for positive line bundles:
\begin{align}\label{1.12f}
\pi {\bf b}_{1}(x)=
\frac{1}{2}R^{E}(v_{j}, \overline{v}_{j})+\frac{1}{4}\textup{Tr}
\big[R^{T^{(1,0)}X}\big](v_{j}, \overline{v}_{j})
\textup{Id}_{E}=\frac{1}{2} R^{E}(v_{j}, \overline{v}_{j})+\frac{r^{X}}{8}\textup{Id}_{E}.
\end{align}
\end{cor}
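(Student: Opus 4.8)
The plan is to read off \eqref{1.12f} by setting $q=0$ in the master formula \eqref{2.1}: the hypothesis $q=0$ forces the manifold to be K\"ahler and simultaneously empties all the ``off-diagonal'' summations, so that only the scalar term on $\det(\overline{W}^{\ast})\otimes E=E$ survives.

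First I would justify the K\"ahler claim. When $q=0$ the subbundle $W$ has rank zero, so \eqref{1.13} gives ${\bf J}v_j=\sqrt{-1}\,v_j$ for every $j$, i.e. ${\bf J}=J$ on $TX\otimes_{\mathbb R}\mathbb C$. Simultaneously $V=0$ and $V^{\bot}=TX$, so \eqref{1.12a} reduces to $g^{TX}=\omega(\cdot,J\cdot)$; combining \eqref{7.1} with the $J$-invariance of $\omega$ gives $\Theta=\omega$, and since $\omega=\frac{\sqrt{-1}}{2\pi}R^L$ is closed we conclude $d\Theta=0$, so $(X,g^{TX},J)$ is K\"ahler.

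Next I would record the vanishings. On a K\"ahler manifold the Bismut connection equals the Levi-Civita connection, $\nabla^B=\nabla^{TX}$, and $T_{as}=-\sqrt{-1}(\partial-\overline{\partial})\Theta=0$, so $dT_{as}=0$ and $\Lambda_\omega(d(\Lambda_\omega T_{as}))=0$; moreover $\nabla^{TX}J=0$, and since here ${\bf J}=J$ this yields $\nabla^B{\bf J}=0$. It is worth stressing that the vanishing of $|\nabla^B{\bf J}|^2$ is genuinely special to $q=0$: in the general K\"ahler situation of Corollary \ref{t1.2} one has ${\bf J}=J$ only when $q=0$, which is exactly why that term persists in \eqref{1.12e}. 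Now every block of \eqref{2.1} past the first bracket carries either a factor $\nabla^B{\bf J}$ or $dT_{as}$, or a summation $\sum_{j=1}^{q}$ or $\sum_{i,j=1}^{q}$ that is empty at $q=0$; hence all five of these blocks vanish, as does the term $-\frac{1}{144}|(\nabla^B_{u_i}{\bf J})u_j|^2$ inside the first bracket.

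It remains to identify the surviving first bracket. Since $q=0$ we have $u_j=v_j$ for all $j$, while $\det(\overline{W}^{\ast})$ is the trivial line, so $I_{\det(\overline{W}^{\ast})\otimes E}=\textup{Id}_E$. This gives
\[
\pi{\bf b}_1(x)=\Big(\tfrac12 R^E(v_j,\overline{v}_j)+\tfrac14\,\textup{Tr}\big[R^{T^{(1,0)}X}\big](v_j,\overline{v}_j)\Big)\textup{Id}_E,
\]
the first equality in \eqref{1.12f}. For the second equality I would invoke the standard K\"ahler identity relating the trace of the Chern curvature to the Riemannian scalar curvature, which in the conventions of \cite{Ma07} reads $\tfrac14\,\textup{Tr}[R^{T^{(1,0)}X}](v_j,\overline{v}_j)=\tfrac{r^X}{8}$. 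The whole argument is a term-by-term collapse, so I expect no substantive obstacle; the only non-mechanical point is this last scalar-curvature normalization, whose correctness is in any case confirmed by agreement with the known positive-line-bundle formula \cite[(4.1.8)]{Ma07}.
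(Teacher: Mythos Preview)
Your argument is correct and follows essentially the same path as the paper, though the paper is terser: it simply notes that \eqref{1.12f} follows immediately from the K\"ahler formula \eqref{1.12e} of Corollary~\ref{t1.2}, whereas you go directly from the master formula \eqref{2.1}. Since passing from \eqref{2.1} to \eqref{1.12e} already uses $T_{as}=0$ and $R^B=R^{TX}$, and passing from \eqref{1.12e} to \eqref{1.12f} just empties the $\sum_{j=1}^q$ sums and kills $\nabla^X{\bf J}$ via ${\bf J}=J$, your direct route merely merges these two specializations; you also spell out the K\"ahler justification that the paper leaves implicit in the statement.
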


\noindent Formula (\ref{1.12f}) follows immediately from (\ref{1.12e}).

%%%%%%%%%%%%%%%%%%%%%%%%%%%%%%%%%%%%%%%%%%%
\section{Comparison between curvatures of Bismut and Levi-Civita connections}\label{s2}
%%%%%%%%%%%%%%%%%%%%%%%%%%%%%%%%%%%%%%%%%%%
In this section we introduce the notion of Bismut connection and develop some properties of the tensors $\nabla^{X}{\bf J}$ and
$\nabla^{B}{\bf J}$. We also derive a formula for the difference between $R^{B}$ and $R^{TX}$.

We use freely the notions from the Introduction.
The Chern connection $\nabla^{T^{(1,0)}X}$ on $T^{(1,0)}X$ induces naturally
a Hermitian connection $\nabla^{T^{(0,1)}X}$ on
$T^{(0,1)}X$. Set
\begin{align}\label{1.5}
\tilde{\nabla}^{TX}=\nabla^{T^{(1,0)}X}\oplus \nabla^{T^{(0,1)}X}.
\end{align}
Then $\tilde{\nabla}^{TX}$ is a Hermitian connection on
$TX\otimes_{\mathbb{R}}\mathbb{C}$ which preserves the
decomposition $TX\otimes_{\mathbb{R}}\mathbb{C}=T^{(1,0)}X\oplus
T^{(0,1)}X$. We still denote by $\tilde{\nabla}^{TX}$ the induced
connection on $TX$. Let $T$ be the torsion of the connection
$\tilde{\nabla}^{TX}$, and let $T_{as}$ be the anti-symmetrization of the
tensor $T$, i.e., for $U, V, W\in TX$,
\begin{align}\label{1.6}
  T_{as}(U,V,W)=\big\langle T(U,V), W\big\rangle +\big\langle T(V,W), U\big\rangle
  +\big\langle T(W, U), V \big\rangle.
  \end{align}
By (\ref{1.5}), the torsion operator $T$ maps $T^{(1,0)}X\otimes T^{(1,0)}X$
(resp. $T^{(0,1)}X\otimes T^{(0,1)}X$) into $T^{(1,0)}X$ (resp. $T^{(0,1)}X$) and
vanishes on $T^{(1,0)}X\otimes T^{(0,1)}X$.

Denote by $S^{B}$ the $1$-form with values in the antisymmetric element of $\textup{End}(TX)$
which satisfies for $U, V, W\in TX$,
\begin{align}\label{1.11a}
\big\langle S^{B}(U)V, W\big\rangle=-\frac{1}{2}T_{as}(U, V, W).
\end{align}
Then the Bismut connection on $TX$ is defined by
\begin{align}\label{1.12b}
\nabla^{B}=\nabla^{TX}+S^{B}.
\end{align}
By \cite[Prop.\,2.5]{Bismut89}, $\nabla^{B}$ preserves the metric $g^{TX}$ and the complex structure $J$ of $TX$.
Then the curvature $R^{B}$ is compatible with the complex structure $J$ of $TX$.

We now continue with some elementary observations about the tensor $\nabla^{X}{\bf J}$.

It follows from (\ref{1.12}) that ${\bf J}, \nabla_{\bullet}^{X}{\bf J},
(\nabla^{X}\nabla^{X}{\bf J})_{(\bullet, \bullet)}$
are skew-adjoint endomorphisms of $TX$ and that for $U,V,W\in TX$, we have
\begin{align}\label{1.14}
\Big\langle\big(\nabla^{X}_{U}{\bf J}\big)V, W\Big\rangle=
\big(\nabla^{X}_{U}\omega\big)(V, W),
\end{align}
which implies immediately
\begin{align}\label{1.14a}
\Big\langle\big(\nabla^{X}_{U}{\bf J}\big)V, W\Big\rangle+
\Big\langle\big(\nabla^{X}_{V}{\bf J}\big)W, U\Big\rangle+&
\Big\langle\big(\nabla^{X}_{W}{\bf J}\big)U, V\Big\rangle=d\omega(U,V,W)=0.
\end{align}

By the definition of $(\nabla^{X}\nabla^{X}{\bf J})_{(U, V)}$,
\begin{align}\label{1.15}
(\nabla^{X}\nabla^{X}{\bf J})_{(U, V)}-
(\nabla^{X}\nabla^{X}{\bf J})_{(V, U)}=\big[R^{TX}(U,V),{\bf J}\big].
\end{align}
From ${\bf J}^{2}=-1$, we obtain
\begin{align}\label{1.14b}
(\nabla_{U}^{X}{\bf J}){\bf J}+{\bf J}(\nabla_{U}^{X}{\bf J})=0
\end{align}
and
\begin{align}\label{1.14c}
 {\bf J}\cdot(\nabla^{X}\nabla^{X}{\bf J})_{(U, V)}+(\nabla_{U}^{X}{\bf J})\cdot
(\nabla_{V}^{X}{\bf J}) +(\nabla_{V}^{X}{\bf J})\cdot
(\nabla_{U}^{X}{\bf J})+
(\nabla^{X}\nabla^{X}{\bf J})_{(U, V)}\cdot{\bf J}=0.
\end{align}

\noindent
From (\ref{1.14a}), we have for $Y\in TX$,
\begin{align}\label{1.16}
\Big\langle (\nabla^{X}\nabla^{X}{\bf J})_{(Y, U)}V, W\Big\rangle
+\Big\langle (\nabla^{X}\nabla^{X}{\bf J})_{(Y, V)}W, U\Big\rangle
+\Big\langle (\nabla^{X}\nabla^{X}{\bf J})_{(Y, W)}U, V\Big\rangle=0.
\end{align}

\noindent
Let $T_{\bf J}^{\ast(1,0)}X$ and $T_{\bf J}^{\ast(0,1)}X$
be the dual bundle of $T_{\bf J}^{(1,0)}X$ and $T_{\bf J}^{(0,1)}X$, respectively.
From (\ref{1.14a}) and (\ref{1.14b}), we find that
\begin{align}\label{2.14}
\big\langle(\nabla_{\bullet}^{X}{\bf J})\ {\sbt}\ , \ {\sbt}\ \big\rangle
\textup{ is of type} \ \big(T_{\bf J}^{\ast(1,0)}X\big)^{\otimes 3}\oplus
\big(T_{\bf J}^{\ast(0,1)}X\big)^{\otimes 3}.\end{align}

On the other hand for the tensor $\nabla^{B}{\bf J}$,  we have for $U, V, W\in TX$,
\begin{align}
\Big\langle\big(\nabla^{B}_{U}{\bf J}\big)V, W\Big\rangle=
\big(\nabla^{B}_{U}\omega\big)(V, W)
\end{align}
and
\begin{align}\label{1.15a}
(\nabla^{B}\nabla^{B}{\bf J})_{(U, V)}-
(\nabla^{B}\nabla^{B}{\bf J})_{(V, U)}=\big[R^{B}(U,V),&{\bf J}\big].
\end{align}
From ${\bf J}^{2}=-1$ we obtain
\begin{align}
 {\bf J}\cdot(\nabla^{B}\nabla^{B}{\bf J})_{(U, V)}+(\nabla_{U}^{B}{\bf J})\cdot
(\nabla_{V}^{B}{\bf J}) +(\nabla_{V}^{B}{\bf J})\cdot
(\nabla_{U}^{B}{\bf J})+
(\nabla^{B}\nabla^{B}{\bf J})_{(U, V)}\cdot{\bf J}=0.
\end{align}

Since $\nabla^{B}$ is not torsion-free, then the analogue of  (\ref{1.14a})
does not hold for the tensor $\nabla^{B}{\bf J}$, neither does the analogue of (\ref{1.16}).
Hence $\nabla^{B}{\bf J}$ does not admit such a decomposition as (\ref{2.14}).
In spite of this, $\nabla^{B}{\bf J}$ does satisfy the following property.

\begin{prop}\label{t3.1}
$\nabla^{B}{\bf J}$ preserves $T^{(1, 0)}X$ and $T^{(0, 1)}X$. Furthermore, it exchanges the subbundles
$W$ and $W^{\bot}$.
\end{prop}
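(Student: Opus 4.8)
The plan is to establish both assertions by working with the $\mathbf{J}$-eigenbundle decomposition $TX\otimes_{\mathbb R}\mathbb C = T^{(1,0)}_{\mathbf J}X\oplus T^{(0,1)}_{\mathbf J}X$ and tracking where $\nabla^B\mathbf J$ sends the frame vectors $u_1,\dots,u_n$. First I would record the basic identity: since $\nabla^B$ preserves both the metric $g^{TX}$ and the complex structure $J$ (this is the cited \cite[Prop.\,2.5]{Bismut89}), the endomorphism $(\nabla^B_U\mathbf J)$ commutes with $J$ for every $U$; indeed differentiating $\mathbf J J=J\mathbf J$ (which holds because $J$ commutes with $\mathbf J$) and using $\nabla^B J=0$ gives $(\nabla^B_U\mathbf J)J = J(\nabla^B_U\mathbf J)$. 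An endomorphism commuting with $J$ preserves the $\pm\sqrt{-1}$-eigenbundles of $J$, i.e. preserves $T^{(1,0)}X$ and $T^{(0,1)}X$. That disposes of the first claim.

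For the second claim I would use that $\nabla^B$ preserves $\mathbf J$-type structure only up to the fact that $\mathbf J^2=-1$ is parallel: from ${\mathbf J}^2=-1$ we get $(\nabla^B_U\mathbf J)\mathbf J + \mathbf J(\nabla^B_U\mathbf J)=0$, so $(\nabla^B_U\mathbf J)$ \emph{anti}commutes with $\mathbf J$ and hence \emph{exchanges} the $\mathbf J$-eigenbundles $T^{(1,0)}_{\mathbf J}X$ and $T^{(0,1)}_{\mathbf J}X$. Now combine this with the first claim. Recall from (\ref{1.13}) that $W$ is the part of $T^{(1,0)}X$ on which $\mathbf J$ acts as $-\sqrt{-1}$ and $W^\bot$ the part on which $\mathbf J$ acts as $+\sqrt{-1}$; equivalently, with $u_j=\overline v_j$ for $j\le q$ and $u_j=v_j$ for $j\ge q+1$, one has $W=\mathrm{span}(\overline u_1,\dots,\overline u_q)\subset T^{(0,1)}_{\mathbf J}X$ and $W^\bot=\mathrm{span}(u_{q+1},\dots,u_n)\subset T^{(1,0)}_{\mathbf J}X$ (so $\overline W = \mathrm{span}(u_1,\dots,u_q)$ and $\overline{W^\bot}=\mathrm{span}(\overline u_{q+1},\dots,\overline u_n)$). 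Take $u\in W$; then $(\nabla^B_U\mathbf J)u$ lies in $T^{(0,1)}X$ (first claim, applied in the $J$-grading, since $W\subset T^{(0,1)}X$) and simultaneously in $T^{(1,0)}_{\mathbf J}X$ (the anticommutation just proved, since $u$ is in the $-\sqrt{-1}$-eigenspace of $\mathbf J$, hence in $T^{(0,1)}_{\mathbf J}X$, and gets moved to the other $\mathbf J$-eigenbundle). The intersection of $T^{(0,1)}X$ with $T^{(1,0)}_{\mathbf J}X$ is exactly $W^\bot$ by the description above, so $(\nabla^B_U\mathbf J)$ maps $W$ into $W^\bot$. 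The symmetric argument with the roles of $W,W^\bot$ (and the $\mp\sqrt{-1}$-eigenspaces) reversed shows it maps $W^\bot$ into $W$, which gives "exchanges."

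I do not expect a serious obstacle here; the only point requiring a little care is bookkeeping the two different complex structures $J$ and $\mathbf J$ and the conjugation conventions relating $W$, $W^\bot$ to the $u_j$'s and the $\mathbf J$-eigenbundles — i.e. correctly identifying $W=T^{(0,1)}X\cap T^{(0,1)}_{\mathbf J}X$ and $W^\bot=T^{(0,1)}X\cap T^{(1,0)}_{\mathbf J}X$ from (\ref{1.13}). Once that dictionary is fixed, both statements fall out of the two algebraic identities "$(\nabla^B_U\mathbf J)$ commutes with $J$" and "$(\nabla^B_U\mathbf J)$ anticommutes with $\mathbf J$," the first from $\nabla^B J=0$ together with $[J,\mathbf J]=0$, the second from $\mathbf J^2=-1$. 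One could alternatively phrase the whole argument in the orthonormal frame $u_1,\dots,u_n,\overline u_1,\dots,\overline u_n$, checking directly that $\langle(\nabla^B_U\mathbf J)u_a,u_b\rangle$-type matrix entries vanish unless one index is $\le q$ and the other is $\ge q+1$; this is essentially the component form of the same computation and may be the cleaner way to present it given that the later sections use exactly such frame expansions.
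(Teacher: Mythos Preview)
Your strategy is sound and the first claim is handled exactly as in the paper: both derive $J(\nabla^B_U\mathbf J)=(\nabla^B_U\mathbf J)J$ from $\nabla^B J=0$ together with $[J,\mathbf J]=0$. For the second claim your anticommutation argument (from $\mathbf J^2=-1$) is a clean conceptual version of what the paper does; the paper instead checks directly in the frame that $\langle(\nabla_U^B\mathbf J)v_j,\overline v_k\rangle=0$ whenever $j,k$ are both $\le q$ or both $\ge q+1$, using $\mathbf J v_j=\mp\sqrt{-1}\,v_j$. The two arguments are equivalent: the paper's computation is the component form of ``commutes with $J$, anticommutes with $\mathbf J$.''

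However, your bookkeeping is off precisely where you warned it might be. By definition $W,W^\bot\subset T^{(1,0)}X$ (spanned by $v_1,\dots,v_q$ and $v_{q+1},\dots,v_n$ respectively), so the correct dictionary is
\[
W=T^{(1,0)}X\cap T^{(0,1)}_{\mathbf J}X,\qquad W^\bot=T^{(1,0)}X\cap T^{(1,0)}_{\mathbf J}X,
\]
not the $T^{(0,1)}X$ versions you wrote. With this fixed, the argument runs: for $u\in W$ one has $(\nabla^B_U\mathbf J)u\in T^{(1,0)}X$ (commutation with $J$) and $(\nabla^B_U\mathbf J)u\in T^{(1,0)}_{\mathbf J}X$ (anticommutation with $\mathbf J$), hence $(\nabla^B_U\mathbf J)u\in W^\bot$; similarly $W^\bot\to W$. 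The paper's direct frame computation has the practical advantage of bypassing this $J$-vs-$\mathbf J$ bookkeeping altogether.
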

\begin{proof} It is a consequence of the facts that $\nabla^{B}J=0$ (cf.\,\cite[Prop.\,2.5]{Bismut89})
and $J{\bf J}={\bf J}J$ that
\begin{align}
J(\nabla^{B}{\bf J})=(\nabla^{B}{\bf J})J,
\end{align}
which implies immediately that $\nabla^{B}{\bf J}$ preserves $T^{(1, 0)}X$ and $T^{(0, 1)}X$. Note that
the metric $\big\langle\cdot, \cdot \big\rangle$ is $\mathbb{C}$-bilinear. Then
for $1\leqslant j, k\leqslant q$ and $U\in TX$, we have
\begin{align}
\Big\langle(\nabla_{U}^{B}{\bf J})v_{j}, \overline{v}_{k}\Big\rangle=
\Big\langle\nabla_{U}^{B}({\bf J}v_{j}), \overline{v}_{k}\Big\rangle+
\Big\langle\nabla_{U}^{B}v_{j}, {\bf J}\overline{v}_{k}\Big\rangle=0.
\end{align}
Similarly, for  $q+1\leqslant j, k\leqslant n$,
\begin{align}
\Big\langle(\nabla_{U}^{B}{\bf J})v_{j}, \overline{v}_{k}\Big\rangle=0.
\end{align} This completes the proof of Proposition \ref{t3.1}.
\end{proof}

\begin{lemma}\label{t3.3}
\begin{align}\begin{split}\label{3.8}
\sum^{q}_{j=1}\sum_{k=q+1}^{n}\Big|\big\langle (\nabla_{\overline{u}_{i}}^{B}{\bf J})u_{j}, u_{k}\big\rangle \Big|^{2}
&=2\Big|\big\langle S^{B}(\overline{u}_{i})u_{j}, u_{k}\big\rangle\Big|^{2},
\\ \sum^{q}_{j=1}\sum_{k=q+1}^{n}\Big|\big\langle (\nabla_{u_{i}}^{B}{\bf J})u_{j}, u_{k}\big\rangle \Big|^{2}
&=\frac{1}{4}\big|\nabla^{B}{\bf J}\big|^{2}-2\Big|\big\langle S^{B}(\overline{u}_{i})u_{j}, u_{k}\big\rangle\Big|^{2}.
\end{split}\end{align}
\end{lemma}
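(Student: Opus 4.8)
The plan is to reduce $\nabla^{B}{\bf J}$ to Levi--Civita data. Since $\nabla^{B}=\nabla^{TX}+S^{B}$ and $\nabla^{TX}{\bf J}=\nabla^{X}{\bf J}$, we have $(\nabla^{B}_{U}{\bf J})V=(\nabla^{X}_{U}{\bf J})V+[S^{B}(U),{\bf J}]V$ for all $U,V$. For the first identity in \eqref{3.8} I would take $U=\overline{u}_{i}$, $V=u_{j}$ with $j\leqslant q$, and pair against $u_{k}$ with $q+1\leqslant k\leqslant n$. By \eqref{2.14} the tensor $\big\langle(\nabla^{X}_{\bullet}{\bf J})\,{\sbt}\,,\,{\sbt}\,\big\rangle$ is supported in the types $\big(T^{\ast(1,0)}_{\bf J}X\big)^{\otimes 3}$ and $\big(T^{\ast(0,1)}_{\bf J}X\big)^{\otimes 3}$, hence it vanishes on the mixed triple $(\overline{u}_{i},u_{j},u_{k})$, so $\big\langle(\nabla^{B}_{\overline{u}_{i}}{\bf J})u_{j},u_{k}\big\rangle=\big\langle[S^{B}(\overline{u}_{i}),{\bf J}]u_{j},u_{k}\big\rangle$. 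Now ${\bf J}u_{j}=\sqrt{-1}\,u_{j}$ and ${\bf J}u_{k}=\sqrt{-1}\,u_{k}$ by \eqref{1.13}, and ${\bf J}$ is skew-adjoint; expanding the commutator and moving ${\bf J}$ across $\langle\cdot,\cdot\rangle$ shows that $\big\langle[S^{B}(\overline{u}_{i}),{\bf J}]u_{j},u_{k}\big\rangle$ is a fixed scalar multiple of $\big\langle S^{B}(\overline{u}_{i})u_{j},u_{k}\big\rangle$. Taking moduli squared and summing over $j\leqslant q$, $k\geqslant q+1$ (and $i$) gives the first line of \eqref{3.8}.

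For the second identity I would compute $\big|\nabla^{B}{\bf J}\big|^{2}=\sum_{a,b}\big|(\nabla^{B}_{e_{a}}{\bf J})e_{b}\big|^{2}$ in the unitary frame $\{u_{i},\overline{u}_{i}\}$ of $TX\otimes_{\mathbb{R}}\mathbb{C}$. Since ${\bf J}^{2}=-1$ forces $\nabla^{B}_{\bullet}{\bf J}$ to anticommute with ${\bf J}$, this operator interchanges $T^{(1,0)}_{\bf J}X$ and $T^{(0,1)}_{\bf J}X$; writing $(\nabla^{B}_{e_{a}}{\bf J})u_{j}$ in the dual unitary frame and converting each real--frame sum into a sum over $\{u_{i},\overline{u}_{i}\}$ (using that $\nabla^{B}$, ${\bf J}$, and hence $\nabla^{B}{\bf J}$, are real) expresses $\big|\nabla^{B}{\bf J}\big|^{2}$ as a fixed linear combination of $\sum_{i,j,k}\big|\big\langle(\nabla^{B}_{u_{i}}{\bf J})u_{j},u_{k}\big\rangle\big|^{2}$ and $\sum_{i,j,k}\big|\big\langle(\nabla^{B}_{\overline{u}_{i}}{\bf J})u_{j},u_{k}\big\rangle\big|^{2}$, all indices running from $1$ to $n$. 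By Proposition \ref{t3.1} the operator $\nabla^{B}{\bf J}$ preserves $T^{(1,0)}X$ and $T^{(0,1)}X$ and exchanges $W$ and $W^{\bot}$; expressing the $u_{j}$ in the frame $v_{1},\dots,v_{n}$ this forces $\big\langle(\nabla^{B}_{\bullet}{\bf J})u_{j},u_{k}\big\rangle=0$ unless $j\leqslant q<k$ or $k\leqslant q<j$, and the skew-adjointness of $\nabla^{B}_{\bullet}{\bf J}$ identifies these two ranges. Thus each of the two sums above reduces, up to a numerical factor, to the corresponding left-hand side of \eqref{3.8}; solving the resulting relation for $\sum_{j\leqslant q<k}\big|\big\langle(\nabla^{B}_{u_{i}}{\bf J})u_{j},u_{k}\big\rangle\big|^{2}$ and inserting the first identity to eliminate the $\nabla^{B}_{\overline{u}_{i}}$--term yields the second line of \eqref{3.8}.

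The algebraic steps here are routine; the part that needs care is the numerical bookkeeping --- keeping track of the factors $2$ produced when the real orthonormal sums $\sum_{a,b}$ are rewritten over $\{u_{i},\overline{u}_{i}\}$, and invoking Proposition \ref{t3.1} together with skew-adjointness precisely enough both to discard the ``diagonal'' blocks (both indices $\leqslant q$, or both $>q$) and to match the two surviving off-diagonal blocks. This constant-chasing is the only genuine obstacle; once it is carried out, the two displayed identities follow.
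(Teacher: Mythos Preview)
Your proposal is correct and follows essentially the same route as the paper. The paper's proof is terser but uses precisely the ingredients you identify: for the first line it invokes \eqref{2.14} (to kill the Levi--Civita contribution on the mixed ${\bf J}$-type $(\overline{u}_{i},u_{j},u_{k})$) together with Proposition~\ref{t3.1} (to pass between the restricted sum $j\leqslant q<k$ and the full sum over $j,k$, picking up the factor $\tfrac12$), and then the commutator computation $\big\langle[S^{B}(\overline{u}_{i}),{\bf J}]u_{j},u_{k}\big\rangle=2\sqrt{-1}\,\big\langle S^{B}(\overline{u}_{i})u_{j},u_{k}\big\rangle$ gives the factor $4$; for the second line it expands $|\nabla^{B}{\bf J}|^{2}$ in the frame $\{u_{i},\overline{u}_{i}\}$ exactly as you describe, uses Proposition~\ref{t3.1} to restrict to the off-diagonal block, and subtracts the first identity. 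Your caveat about the factors of $2$ is apt, but once tracked they match the paper's constants.
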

\begin{proof}
In view of (\ref{2.14}) and Proposition \ref{t3.1}, we find
\begin{align}\label{3.9}
\sum^{q}_{j=1}\sum_{k=q+1}^{n}\Big|\big\langle (\nabla_{\overline{u}_{i}}^{B}{\bf J})u_{j}, u_{k}\big\rangle \Big|^{2}
=\frac{1}{2}\Big|\big\langle (\nabla_{\overline{u}_{i}}^{B}{\bf J})u_{j}, u_{k}\big\rangle \Big|^{2}
=2\Big|\big\langle S^{B}(\overline{u}_{i})u_{j}, u_{k}\big\rangle\Big|^{2}.
\end{align}
Again by Proposition \ref{t3.1} we get
\begin{align}\begin{split}\label{3.10}
\big|\nabla^{B}{\bf J}\big|^{2}&=2\big|(\nabla_{\overline{u}_{i}}^{B}{\bf J})u_{j}\big|^{2}
+2\big|(\nabla_{u_{i}}^{B}{\bf J})u_{j}\big|^{2}
\\ &=
4\sum^{q}_{j=1}\sum_{k=q+1}^{n}\Big|\big\langle (\nabla_{\overline{u}_{i}}^{B}{\bf J})u_{j}, u_{k}\big\rangle \Big|^{2}
+4\sum^{q}_{j=1}\sum_{k=q+1}^{n}\Big|\big\langle (\nabla_{u_{i}}^{B}{\bf J})u_{j}, u_{k}\big\rangle \Big|^{2}.
\end{split}\end{align}
Combining (\ref{3.9}) and (\ref{3.10}), we obtain the second equality of (\ref{3.8}). This completes the proof of Lemma \ref{t3.3}.
\end{proof}

The main result of this section is the following formula for the difference $R^{B}-R^{TX}$.
\begin{prop}\label{t2.4c} We have
\begin{align}\begin{split}\label{3.15}
&\Big\langle \big(R^{B}-R^{TX}\big)(u_{i}, \overline{u}_{i})u_{j}, \overline{u}_{j}\Big\rangle
\\=&
\frac{1}{8}\Big(\big|\nabla^{B}{\bf J}\big|^{2}-\big|\nabla^{X}{\bf J}\big|^{2}\Big)+\frac{1}{4}\Lambda_{\omega}\big(d(\Lambda_{\omega}T_{as})\big)
-2\Big|\big\langle S^{B}(\overline{u}_{i})u_{j}, u_{k}\big\rangle \Big|^{2}.
\end{split}\end{align}
\end{prop}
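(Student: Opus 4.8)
The plan is to compute $R^{B}-R^{TX}$ directly from the defining relation $\nabla^{B}=\nabla^{TX}+S^{B}$ of \eqref{1.12b}, contract with the frame $u_{i},\overline{u}_{i},u_{j},\overline{u}_{j}$, and then translate every term that appears into the quantities on the right-hand side of \eqref{3.15}. Concretely, from $\nabla^{B}=\nabla^{TX}+S^{B}$ one gets the standard identity
\begin{align*}
R^{B}=R^{TX}+\nabla^{TX}S^{B}+S^{B}\wedge S^{B},
\end{align*}
so that, writing things out on the pair $(u_{i},\overline{u}_{i})$,
\begin{align*}
\big(R^{B}-R^{TX}\big)(u_{i},\overline{u}_{i})
=\big(\nabla^{TX}_{u_{i}}S^{B}\big)(\overline{u}_{i})-\big(\nabla^{TX}_{\overline{u}_{i}}S^{B}\big)(u_{i})
+\big[S^{B}(u_{i}),S^{B}(\overline{u}_{i})\big]-S^{B}\big(T^{TX}(u_{i},\overline{u}_{i})\big),
\end{align*}
the last term vanishing since $\nabla^{TX}$ is torsion-free. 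Pairing with $\langle\,\cdot\,u_{j},\overline{u}_{j}\rangle$ and using the summation convention, the commutator term produces exactly a sum of $|\langle S^{B}(\overline{u}_{i})u_{j},u_{k}\rangle|^{2}$-type expressions once one inserts $\sum_{k}\overline u_k\otimes u^k+\cdots$ for the identity and invokes Proposition \ref{t3.1} to see which index ranges survive; this is the origin of the term $-2|\langle S^{B}(\overline{u}_{i})u_{j},u_{k}\rangle|^{2}$.

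Next I would handle the derivative terms $\langle(\nabla^{TX}_{u_{i}}S^{B})(\overline u_i)u_j,\overline u_j\rangle$. Here I use \eqref{1.11a}, which expresses $\langle S^{B}(U)V,W\rangle=-\tfrac12 T_{as}(U,V,W)$, so that the covariant derivative of $S^{B}$ is governed by $\nabla^{TX}T_{as}$, i.e.\ essentially by $dT_{as}$ after antisymmetrization. Contracting $dT_{as}=2\sqrt{-1}\,\partial\overline\partial\Theta$ twice against $\omega$ (using \eqref{3.2} to identify $\omega=\sqrt{-1}u^{j}\wedge\overline u^{j}$) yields the term $\tfrac14\Lambda_{\omega}\big(d(\Lambda_{\omega}T_{as})\big)$. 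One must be careful to separate the genuine trace-of-derivative part from correction terms coming from $\nabla^{TX}u_{i}$: these correction terms, in which a derivative falls on the frame rather than on $T_{as}$, recombine with pieces of the $S^{B}\wedge S^{B}$ contribution and with the difference between $\nabla^{TX}\mathbf J$ and $\nabla^{B}\mathbf J$.

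The bridge to the $|\nabla^{B}\mathbf J|^{2}-|\nabla^{X}\mathbf J|^{2}$ term is the relation $\nabla^{B}\mathbf J=\nabla^{TX}\mathbf J+[S^{B},\mathbf J]$, whence
\begin{align*}
\big|\nabla^{B}\mathbf J\big|^{2}-\big|\nabla^{X}\mathbf J\big|^{2}
=2\big\langle\nabla^{X}_{e_i}\mathbf J,[S^{B}(e_i),\mathbf J]\big\rangle+\big|[S^{B}(e_i),\mathbf J]\big|^{2},
\end{align*}
and one shows that the cross term $\langle\nabla^{X}\mathbf J,[S^{B},\mathbf J]\rangle$ vanishes (it is a full contraction of an object of type $(T^{*(1,0)}_{\mathbf J}X)^{\otimes3}\oplus(T^{*(0,1)}_{\mathbf J}X)^{\otimes3}$, by \eqref{2.14}, against something of complementary type coming from Proposition \ref{t3.1}), so that $|\nabla^{B}\mathbf J|^{2}-|\nabla^{X}\mathbf J|^{2}=|[S^{B}(e_i),\mathbf J]|^{2}$, a pure $S^{B}$-quadratic expression. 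I would then re-expand this in the unitary frame $u_j$, again using Lemma \ref{t3.3} and Proposition \ref{t3.1} to keep only the $W\leftrightarrow W^{\bot}$ components, and match the resulting $S^{B}\cdot S^{B}$ sums against what came out of the $[S^{B}(u_i),S^{B}(\overline u_i)]$ commutator in the curvature computation. Assembling all pieces and bookkeeping the numerical coefficients gives \eqref{3.15}.

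The main obstacle is the last matching step: there are several inequivalent quadratic combinations of $S^{B}$ (equivalently of $\langle(\nabla^{B}\mathbf J)\cdot,\cdot\rangle$) — those with all three arguments holomorphic versus mixed, and those contracted in $i$ versus not — and one must use the symmetry $T_{as}(U,V,W)$, the type decomposition \eqref{2.14} for $\nabla^{X}\mathbf J$ (which does \emph{not} hold for $\nabla^{B}\mathbf J$), and Proposition \ref{t3.1} repeatedly to reduce everything to the single surviving scalar $|\langle S^{B}(\overline u_i)u_j,u_k\rangle|^{2}$ with $j\le q<k$. Getting the coefficient $-2$ (rather than $-1$ or $-4$) is where all the combinatorial factors from $\omega=\sqrt{-1}u^j\wedge\overline u^j$, the doubling between $(u_i,\overline u_i)$ and $(\overline u_i,u_i)$, and Lemma \ref{t3.3} must be tracked with care; I expect this to absorb most of the work.
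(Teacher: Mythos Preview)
Your overall strategy---expand $R^{B}-R^{TX}$ using $\nabla^{B}=\nabla^{TX}+S^{B}$, then translate each ingredient---matches the paper's approach exactly (cf.\ Lemmas~\ref{t2.4a}, \ref{t3.2}, \ref{t3.4}). But there is a genuine error in the middle of your argument.

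You claim that the cross term
\[
2\big\langle \nabla^{X}_{e_{i}}\mathbf J,\,[S^{B}(e_{i}),\mathbf J]\big\rangle
\]
vanishes by a type argument combining \eqref{2.14} with Proposition~\ref{t3.1}. It does not. Proposition~\ref{t3.1} concerns the decomposition according to the complex structure $J$ (it says $\nabla^{B}\mathbf J$ preserves $T^{(1,0)}X$), whereas \eqref{2.14} is the decomposition according to $\mathbf J$. These are different splittings, so no orthogonality follows. In fact, expanding in the $u_{i}$-frame as the paper does in \eqref{3.7} gives
\[
\tfrac{1}{8}\big(|\nabla^{B}\mathbf J|^{2}-|\nabla^{X}\mathbf J|^{2}\big)
=|\langle S^{B}(u_{i})u_{j},u_{k}\rangle|^{2}+|\langle S^{B}(\overline u_{i})u_{j},u_{k}\rangle|^{2}
+\text{(nonzero cross terms)},
\]
the cross terms being $\tfrac{\sqrt{-1}}{2}\langle S^{B}(u_{i})u_{j},(\nabla^{X}_{\overline u_{i}}\mathbf J)\overline u_{j}\rangle$ and its conjugate; see \eqref{3.1}. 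A short computation using $\mathbf J\overline u_{j}=-\sqrt{-1}\,\overline u_{j}$ shows these equal $\langle S^{B}(u_{i})u_{j},u_{k}\rangle\langle\nabla^{TX}_{\overline u_{i}}\overline u_{j},\overline u_{k}\rangle$ and its conjugate, which are \emph{exactly} the correction terms (with opposite sign) that appear in Lemma~\ref{t3.4} when you relate $\Lambda_{\omega}(d(\Lambda_{\omega}T_{as}))$ to $\Lambda_{\omega}\Lambda_{\omega}(dT_{as})$. So the mechanism is not ``cross term vanishes'' but ``cross term from $|\nabla\mathbf J|^{2}$ cancels the frame-derivative corrections in the $\Lambda_{\omega}$ computation.''

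Relatedly, your claim that the commutator $[S^{B}(u_{i}),S^{B}(\overline u_{i})]$ alone produces the final $-2|\langle S^{B}(\overline u_{i})u_{j},u_{k}\rangle|^{2}$ is also off: by \eqref{3.3} the commutator yields the \emph{difference} $|\langle S^{B}(u_{i})u_{j},u_{k}\rangle|^{2}-|\langle S^{B}(\overline u_{i})u_{j},u_{k}\rangle|^{2}$, and the coefficient $-2$ on the $\overline u_{i}$-term only emerges after subtracting the quadratic part of $\tfrac{1}{8}(|\nabla^{B}\mathbf J|^{2}-|\nabla^{X}\mathbf J|^{2})$, which contributes the \emph{sum} of those two squares. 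Once you fix the cross-term cancellation and this bookkeeping, your argument becomes identical to the paper's.
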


\noindent The proof of Proposition \ref{t2.4c} is based on the following three Lemmas.

\begin{lemma}\label{t2.4a}
\begin{align}\begin{split}\label{3.3}
&\Big\langle \big(R^{B}-R^{TX}\big)(u_{i}, \overline{u}_{i})u_{j}, \overline{u}_{j}\Big\rangle
\\=&
\Big|\big\langle S^{B}(u_{i})u_{j}, u_{k}\big\rangle\Big|^{2}-\Big|\big\langle S^{B}(\overline{u}_{i})u_{j}, u_{k}\big\rangle\Big|^{2}
+\frac{1}{16}\Lambda_{\omega}\Lambda_{\omega}(dT_{as}).
\end{split}\end{align}
\end{lemma}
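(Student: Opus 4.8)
The plan is to compute $\big\langle (R^{B}-R^{TX})(u_i,\overline u_i)u_j,\overline u_j\big\rangle$ directly from the definition $\nabla^{B}=\nabla^{TX}+S^{B}$ in \textup{(\ref{1.12b})}. Writing $R^{B}=R^{TX}+\nabla^{TX}S^{B}+S^{B}\wedge S^{B}$ (with the appropriate sign conventions), one gets
\begin{align*}
\big(R^{B}-R^{TX}\big)(U,V)=\big(\nabla^{TX}_{U}S^{B}\big)(V)-\big(\nabla^{TX}_{V}S^{B}\big)(U)+\big[S^{B}(U),S^{B}(V)\big].
\end{align*}
First I would deal with the quadratic term $[S^{B}(u_i),S^{B}(\overline u_i)]$: pairing with $u_j$ and $\overline u_j$ and inserting a complete orthonormal frame $\{u_k,\overline u_k\}$ of $TX\otimes\C$ produces, via \textup{(\ref{1.11a})}, sums of products of $T_{as}$-components, which after using the type restrictions on $T$ (it maps $T^{(1,0)}X\otimes T^{(1,0)}X\to T^{(1,0)}X$ etc., and vanishes on the mixed part) collapse to $\big|\langle S^{B}(u_i)u_j,u_k\rangle\big|^{2}-\big|\langle S^{B}(\overline u_i)u_j,u_k\rangle\big|^{2}$; this is the first two terms on the right of \textup{(\ref{3.3})}.

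Next I would handle the first-order term $\big(\nabla^{TX}_{u_i}S^{B}\big)(\overline u_i)-\big(\nabla^{TX}_{\overline u_i}S^{B}\big)(u_i)$ paired with $u_j,\overline u_j$. Using \textup{(\ref{1.11a})} this becomes a combination of covariant derivatives of $T_{as}$, and the summation over $i$ together with the identity $T_{as}=-\sqrt{-1}(\partial-\overline\partial)\Theta$ (equivalently $\omega=\sqrt{-1}\sum u^j\wedge\overline u^j$ from \textup{(\ref{3.2})}) should let me recognise the trace $\sum_i \nabla_{u_i}T_{as}(\overline u_i,\cdot,\cdot)-\nabla_{\overline u_i}T_{as}(u_i,\cdot,\cdot)$ as (a multiple of) a double $\omega$-contraction $\Lambda_{\omega}\Lambda_{\omega}(dT_{as})$. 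The key point here is that $dT_{as}$ is obtained by antisymmetrising $\nabla^{TX}T_{as}$ (since $\nabla^{TX}$ is torsion-free), so the combination that appears from the curvature computation is exactly a piece of $dT_{as}$ after contracting twice with $\omega$; getting the numerical factor $\tfrac{1}{16}$ right is the delicate bookkeeping step.

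The main obstacle will be this last identification: organising the many terms of $\nabla^{TX}S^{B}$ so that the antisymmetrisations combine correctly into $dT_{as}$ rather than into stray non-closed pieces, and keeping track of the factors of $\sqrt{-1}$ and $\tfrac12$ coming from \textup{(\ref{1.11a})}, from the definition of $\Lambda_\omega$, and from expressing everything in the unitary frame $u_1,\dots,u_n$ (where $\omega=\sqrt{-1}\sum u^j\wedge\overline u^j$). A useful reduction is that both sides of \textup{(\ref{3.3})} are scalars (pointwise numbers), so I may choose, at a fixed point $x$, normal coordinates for $\nabla^{TX}$ and a frame with $\nabla^{TX}u_i|_x=0$, which kills the terms where the derivative falls on the frame and isolates the intrinsic contribution; this should make the comparison with the coordinate expression for $dT_{as}$ transparent. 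Once the factor is pinned down, \textup{(\ref{3.3})} follows, and then Lemma \ref{t3.3} and the two remaining lemmas (not yet stated) are used to convert $\big|\langle S^{B}(u_i)u_j,u_k\rangle\big|^{2}$ into the $|\nabla^{B}{\bf J}|^{2}-|\nabla^{X}{\bf J}|^{2}$ form of Proposition \ref{t2.4c}.
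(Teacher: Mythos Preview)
Your plan is correct and follows essentially the same route as the paper: expand $R^{B}-R^{TX}$ via \textup{(\ref{1.12b})} into the quadratic piece $[S^{B}(u_i),S^{B}(\overline u_i)]$ and the derivative piece $(\nabla^{X}_{u_i}S^{B})(\overline u_i)-(\nabla^{X}_{\overline u_i}S^{B})(u_i)$, then use \textup{(\ref{1.11a})} to rewrite the latter as covariant derivatives of $T_{as}$ and recognise the double $\omega$-contraction of $dT_{as}$. The paper is simply terser about the two identifications you describe (it asserts them as ``one verifies directly'' and ``substituting \ldots\ yields \textup{(\ref{3.3})}''); one small caveat is that your appeal to the $J$-type restrictions on $T$ should be made with care, since the $u_i$ are $\mathbf{J}$-holomorphic rather than $J$-holomorphic, but the direct verification goes through regardless.
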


\begin{proof}
One verifies directly that
\begin{align}\begin{split}\label{3.4}
&\Big\langle \big(R^{B}-R^{TX}\big)(u_{i}, \overline{u}_{i})u_{j}, \overline{u}_{j}\Big\rangle
\\=&
\Big\langle S^{B}(u_{i})u_{j}, S^{B}(\overline{u}_{i})\overline{u}_{j}\Big\rangle
-\Big\langle S^{B}(\overline{u}_{i})u_{j}, S^{B}(u_{i})\overline{u}_{j}\Big\rangle
\\&+\Big\langle \big(\nabla_{u_{i}}^{X}S^{B}\big)(\overline{u}_{i})u_{j}, \overline{u}_{j}\Big\rangle
- \Big\langle \big(\nabla_{\overline{u}_{i}}^{X}S^{B}\big)(u_{i})u_{j}, \overline{u}_{j}\Big\rangle
\end{split}\end{align}
By (\ref{1.11a}), we obtain
\begin{align}\begin{split}\label{3.5}
\Big\langle \big(\nabla_{u_{i}}^{X}S^{B}\big)(\overline{u}_{i})u_{j}, \overline{u}_{j}\Big\rangle
&=-\frac{1}{2}\big(\nabla_{u_{i}}^{X}T_{as}\big)(\overline{u}_{i}, u_{j}, \overline{u}_{j}),
\\ \Big\langle \big(\nabla_{\overline{u}_{i}}^{X}S^{B}\big)(u_{i})u_{j}, \overline{u}_{j}\Big\rangle
&=-\frac{1}{2}\big(\nabla_{\overline{u}_{i}}^{X}T_{as}\big)(u_{i}, u_{j}, \overline{u}_{j}).
\end{split}\end{align}
Substituting (\ref{3.5}) into (\ref{3.4}) yields (\ref{3.3}).
\end{proof}

\begin{lemma}\label{t3.2}
\begin{align}\begin{split}\label{3.1}
&\frac{1}{8}\Big(\big|\nabla^{B}{\bf J}\big|^{2}-\big|\nabla^{X}{\bf J}\big|^{2}\Big)
\\=&
\Big|\big\langle S^{B}(u_{i})u_{j}, u_{k}\big\rangle\Big|^{2}+\Big|\big\langle S^{B}(\overline{u}_{i})u_{j}, u_{k}\big\rangle\Big|^{2}
\\&
+\frac{\sqrt{-1}}{2}\Big\langle S^{B}(u_{i})u_{j}, (\nabla_{\overline{u}_{i}}^{X}{\bf J})\overline{u}_{j}\Big\rangle
-\frac{\sqrt{-1}}{2}\Big\langle S^{B}(\overline{u}_{i})\overline{u}_{j}, (\nabla_{u_{i}}^{X}{\bf J})u_{j}\Big\rangle.
\end{split}\end{align}
\end{lemma}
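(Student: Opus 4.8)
The plan is to express both $|\nabla^{B}{\bf J}|^{2}$ and $|\nabla^{X}{\bf J}|^{2}$ in the unitary frame $u_{1},\dots,u_{n},\overline u_{1},\dots,\overline u_{n}$, expand $\nabla^{B}=\nabla^{X}+S^{B}$, and then sort the resulting terms by type using Proposition \ref{t3.1}. First I would write
\[
|\nabla^{B}{\bf J}|^{2}=\sum_{a,b}\big|(\nabla^{B}_{e_{a}}{\bf J})e_{b}\big|^{2}
=2\sum_{i}\big|(\nabla^{B}_{u_{i}}{\bf J})u_{j}\big|^{2}+2\sum_{i}\big|(\nabla^{B}_{\overline u_{i}}{\bf J})u_{j}\big|^{2},
\]
using that $\nabla^{B}{\bf J}$ preserves $T^{(1,0)}X$ and $T^{(0,1)}X$ (Proposition \ref{t3.1}), and the same decomposition for $\nabla^{X}{\bf J}$; here the implicit sum over $j$ runs $1,\dots,n$. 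Substituting $\nabla^{B}_{U}{\bf J}=\nabla^{X}_{U}{\bf J}+[S^{B}(U),{\bf J}]$ into each summand produces three pieces: the pure $\nabla^{X}{\bf J}$ term (which cancels against $|\nabla^{X}{\bf J}|^{2}$ on the left side of \eqref{3.1}), a pure $S^{B}$ term $|[S^{B}(U),{\bf J}]u_{j}|^{2}$ summed appropriately, and a cross term $2\,\Re\big\langle(\nabla^{X}_{U}{\bf J})u_{j},[S^{B}(U),{\bf J}]u_{j}\big\rangle$.

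Next I would evaluate the pure $S^{B}$ contribution. Since ${\bf J}u_{j}=\sqrt{-1}u_{j}$ for the frame $u_{j}$ of $T^{(1,0)}_{\bf J}X$, one has $[S^{B}(U),{\bf J}]u_{j}=-2\sqrt{-1}\,(S^{B}(U)u_{j})^{(0,1)}_{\bf J}$, i.e. only the $T^{(0,1)}_{\bf J}X$-component survives, with a factor of magnitude $2$. Hence $\sum_{j}\big|[S^{B}(U),{\bf J}]u_{j}\big|^{2}=4\sum_{j,k}\big|\langle S^{B}(U)u_{j},\overline u_{k}\rangle\big|^{2}$; but $S^{B}(U)$ is antisymmetric and preserves the splitting $T^{(1,0)}X\oplus T^{(0,1)}X$ compatibly (via $T_{as}$ vanishing on mixed type, \eqref{1.6}), which lets me rewrite $\langle S^{B}(U)u_{j},\overline u_{k}\rangle$ in terms of $\langle S^{B}(U)u_{j},u_{k}\rangle$ by complex conjugation of indices. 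Carefully accounting for the factors $2$ from the $u_{i}$-sum and $4$ from the commutator and comparing with the $1/8$ prefactor in \eqref{3.1}, this yields exactly $|\langle S^{B}(u_{i})u_{j},u_{k}\rangle|^{2}+|\langle S^{B}(\overline u_{i})u_{j},u_{k}\rangle|^{2}$.

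Finally the cross term: using \eqref{1.14} and \eqref{1.11a} to trade $(\nabla^{X}_{U}{\bf J})$ for $\nabla^{X}_{U}\omega$ and $S^{B}$ for $-\tfrac12 T_{as}$, and again projecting onto the surviving type component of $[S^{B}(U),{\bf J}]u_{j}$, the real-part cross term collapses to the two terms $\tfrac{\sqrt{-1}}{2}\langle S^{B}(u_{i})u_{j},(\nabla^{X}_{\overline u_{i}}{\bf J})\overline u_{j}\rangle-\tfrac{\sqrt{-1}}{2}\langle S^{B}(\overline u_{i})\overline u_{j},(\nabla^{X}_{u_{i}}{\bf J})u_{j}\rangle$ after relabeling $u_{i}\leftrightarrow\overline u_{i}$ in one half and using that these two quantities are complex conjugates (so $2\Re$ of one equals the sum). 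The main obstacle is bookkeeping: keeping the numerical factors from the commutator with ${\bf J}$, from summing over the doubled frame, and from the antisymmetrization \eqref{1.6} all consistent, and making sure the type-purity arguments (which hold for $\nabla^{X}{\bf J}$ by \eqref{2.14} but only partially for $\nabla^{B}{\bf J}$ via Proposition \ref{t3.1}) are invoked only where valid. Everything else is a direct expansion.
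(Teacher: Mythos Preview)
Your approach is essentially the same as the paper's: rewrite both $|\nabla^{B}{\bf J}|^{2}$ and $|\nabla^{X}{\bf J}|^{2}$ in the $u_{j}$-frame via Proposition~\ref{t3.1} and \eqref{2.14}, expand $\nabla^{B}_{U}{\bf J}=\nabla^{X}_{U}{\bf J}+[S^{B}(U),{\bf J}]$, and read off the pure $S^{B}$-terms and the cross terms. Two small slips to fix when you write it out: first, $[S^{B}(U),{\bf J}]u_{j}=2\sqrt{-1}\,(S^{B}(U)u_{j})^{(0,1)}_{\bf J}$ (your sign is off, harmless for the norm); second, since the $\mathbb{C}$-bilinear form satisfies $\langle \overline{u}_{k},u_{l}\rangle=\delta_{kl}$ and $T^{(1,0)}_{\bf J}X$ is isotropic, one has directly $\big|[S^{B}(U),{\bf J}]u_{j}\big|^{2}=4\sum_{k}\big|\langle S^{B}(U)u_{j},u_{k}\rangle\big|^{2}$ with $u_{k}$, not $\overline{u}_{k}$, so no rewriting is needed---and indeed $S^{B}(U)$ does \emph{not} preserve the $J$-splitting $T^{(1,0)}X\oplus T^{(0,1)}X$ in general (e.g.\ $T_{as}(v_{i},\overline{v}_{j},\overline{v}_{k})=\langle T(\overline{v}_{j},\overline{v}_{k}),v_{i}\rangle$ need not vanish), so that justification should be dropped. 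With these corrections the computation goes through exactly as you outline and matches \eqref{3.7}--\eqref{3.7b}.
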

\begin{proof}
By (\ref{1.12g}) and (\ref{2.14}) we obtain
\begin{align}\label{3.6}
\big|\nabla^{X}{\bf J}\big|^{2}=2\big|(\nabla^{X}_{u_{i}}{\bf J})u_{j}\big|^{2}, \ \
\big|\nabla^{B}{\bf J}\big|^{2}=2\big|(\nabla^{B}_{u_{i}}{\bf J})u_{j}\big|^{2}+
2\big|(\nabla^{B}_{\overline{u}_{i}}{\bf J})u_{j}\big|^{2}.
\end{align}
Combining (\ref{1.12b}) and (\ref{3.6}) yields
\begin{align}\begin{split}\label{3.7}
&\big|\nabla^{B}{\bf J}\big|^{2}-\big|\nabla^{X}{\bf J}\big|^{2}
\\ =&
2\Big|[S^{B}(u_{i}), {\bf J}]u_{j}\Big|^{2}+2\Big|[S^{B}(\overline{u}_{i}), {\bf J}]u_{j}\Big|^{2}
\\&
+2\Big\langle [S^{B}(u_{i}), {\bf J}]u_{j}, \big(\nabla_{\overline{u}_{i}}^{X}{\bf J}\big)\overline{u}_{j}\Big\rangle
+2\Big\langle [S^{B}(\overline{u}_{i}), {\bf J}]\overline{u}_{j}, \big(\nabla_{u_{i}}^{X}{\bf J}\big)u_{j}\Big\rangle.
\end{split}\end{align}
Clearly,
\begin{align}\label{3.7a}
\Big|[S^{B}(u_{i}), {\bf J}]u_{j}\Big|^{2}=\Big|\big\langle [S^{B}(u_{i}), {\bf J}]u_{j}, u_{k}\big\rangle \Big|^{2}
=4\Big|\big\langle S^{B}(u_{i})u_{j}, u_{k}\big\rangle \Big|^{2},
\end{align}
and
\begin{align}\label{3.7b}
\Big|[S^{B}(\overline{u}_{i}), {\bf J}]u_{j}\Big|^{2}=\Big|\big\langle [S^{B}(\overline{u}_{i}), {\bf J}]u_{j}, u_{k}\big\rangle \Big|^{2}
=4\Big|\big\langle S^{B}(\overline{u}_{i})u_{j}, u_{k}\big\rangle \Big|^{2}.
\end{align}
Substituting (\ref{3.7a}) and (\ref{3.7b}) into (\ref{3.7}) yields (\ref{3.1}).
\end{proof}

\begin{lemma}\label{t3.4}
\begin{align}\begin{split}\label{3.11}
\frac{1}{4}\Lambda_{\omega}\big(d(\Lambda_{\omega}T_{as})\big)
=&\frac{1}{16}\Lambda_{\omega}\Lambda_{\omega}(dT_{as})
-\big\langle S^{B}(u_{i})u_{j}, u_{k}\big\rangle \big\langle \nabla_{\overline{u}_{i}}^{TX}\overline{u}_{j},
\overline{u}_{k}\big\rangle
\\& -\big\langle S^{B}(\overline{u}_{i})\overline{u}_{j}, \overline{u}_{k}\big\rangle
\big\langle \nabla_{u_{i}}^{TX}u_{j}, u_{k}\big\rangle.
\end{split}\end{align}
\end{lemma}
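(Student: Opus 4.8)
The plan is to expand both sides in the orthonormal frame $u_1,\dots,u_n$ and reduce everything to contractions of $T_{as}$ and of Christoffel-type symbols $\langle\nabla^{TX}_\bullet u_\bullet,u_\bullet\rangle$. First I would compute $\Lambda_\omega T_{as}$: using (\ref{3.2}) one has $\omega=\sqrt{-1}\,u^j\wedge\overline u^j$, so contraction with $\omega$ replaces a pair of slots by $i_{u_j}i_{\overline u_j}$ up to the factor from (\ref{3.2}); thus $\Lambda_\omega T_{as}$ is (a multiple of) the $1$-form $W$ defined by $W(\cdot)=T_{as}(u_j,\overline u_j,\cdot)$. Then $d(\Lambda_\omega T_{as})$ is computed by the usual Koszul formula for the exterior derivative in terms of $\nabla^{TX}$ (which is torsion-free, so $d$ on a $1$-form is the antisymmetrization of $\nabla^{TX}$), and a final contraction with $\omega$ produces the left-hand side. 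On the right, $\Lambda_\omega\Lambda_\omega(dT_{as})$ is the double contraction $(dT_{as})(u_i,\overline u_i,u_j,\overline u_j)$ up to a constant, and $dT_{as}=\nabla^{TX}T_{as}$ antisymmetrized (again torsion-freeness of $\nabla^{TX}$). So the identity becomes a purely combinatorial statement comparing $\Lambda_\omega(\nabla^{TX}(\Lambda_\omega T_{as}))$ with $\Lambda_\omega\Lambda_\omega(\nabla^{TX}T_{as})$.

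The key point is that these two differ precisely because in $\Lambda_\omega(d(\Lambda_\omega T_{as}))$ the inner contraction with $\omega$ is performed \emph{before} differentiating, so one picks up terms where $\nabla^{TX}$ hits the frame vectors $u_j,\overline u_j$ used in the inner contraction, rather than only the tensor $T_{as}$. Concretely, $\nabla^{TX}_\bullet(\Lambda_\omega T_{as}) = \Lambda_\omega(\nabla^{TX}_\bullet T_{as}) + (\text{terms involving } \nabla^{TX}_\bullet u_j,\ \nabla^{TX}_\bullet\overline u_j)$, since $\omega$ itself is not $\nabla^{TX}$-parallel but the combination $u^j\wedge\overline u^j$ is handled by Leibniz. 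Writing $\nabla^{TX}_{u_i}u_j = \langle\nabla^{TX}_{u_i}u_j,u_k\rangle u_k + \langle\nabla^{TX}_{u_i}u_j,\overline u_k\rangle\overline u_k$ and using that $T_{as}$ restricted to slots in $T^{(1,0)}_{\mathbf J}X$ or $T^{(0,1)}_{\mathbf J}X$ has the (3,0)$\oplus$(0,3) type behaviour coming from (\ref{1.6}) together with the vanishing of $T$ on mixed types, most of these correction terms drop out, leaving exactly two surviving contractions. Finally I would rewrite these via (\ref{1.11a}), $\langle S^B(U)V,W\rangle=-\tfrac12 T_{as}(U,V,W)$, to get the $\langle S^B(u_i)u_j,u_k\rangle\langle\nabla^{TX}_{\overline u_i}\overline u_j,\overline u_k\rangle$ and conjugate terms appearing on the right of (\ref{3.11}), with the correct sign and coefficient.

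I expect the main obstacle to be exactly this bookkeeping: tracking which of the many terms produced by moving $\nabla^{TX}$ past the two $\omega$-contractions survive, keeping careful account of the factors of $\sqrt{-1}$ from (\ref{3.2}) and of the combinatorial constants (the $\tfrac14$, $\tfrac1{16}$) coming from antisymmetrization conventions for $d$ on a $2$-form versus a $3$-form, and making sure the reality/conjugation structure is respected so that the two error terms combine into the symmetric pair displayed in (\ref{3.11}). The identities (\ref{1.14a}) and the type statement (\ref{2.14}) — equivalently the block structure of $T_{as}$ — are what force the cancellation of all the unwanted terms, so invoking them at the right moment is the crux of the argument.
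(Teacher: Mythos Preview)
Your proposal is correct and follows essentially the same route as the paper. The paper's proof is precisely the computation you outline: write $-\tfrac14\Lambda_\omega(d(\Lambda_\omega T_{as}))$ via the Cartan formula for $d$ applied to the $1$-form $T_{as}(u_j,\overline u_j,\cdot)$ (this is (\ref{3.12})), then use the Leibniz rule for $\nabla^{TX}$ to separate the covariant-derivative term $-\tfrac{1}{16}\Lambda_\omega\Lambda_\omega(dT_{as})$ from the frame-derivative corrections, rewriting the latter through (\ref{1.11a}) as four $S^B$-terms (this is (\ref{3.13})); finally combine these four terms pairwise into the two displayed in (\ref{3.11}) (this is (\ref{3.14})). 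One small caution: the ``$(3,0)\oplus(0,3)$ type'' you invoke via (\ref{2.14}) concerns $\nabla^X\mathbf J$, while the vanishing of $T$ on mixed types is with respect to $J$, not $\mathbf J$; the simplification in (\ref{3.14}) actually rests only on the total antisymmetry of $T_{as}$ and metric compatibility of $\nabla^{TX}$, so you need less structure than you suggest.
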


\begin{proof}
Clearly,
\begin{align}\begin{split}\label{3.12}
&-\frac{1}{4}\Lambda_{\omega}\big(d(\Lambda_{\omega}T_{as})\big)=\frac{\sqrt{-1}}{4}d(\Lambda_{\omega}T_{as})(u_{i}, \overline{u}_{i})
\\=&
\frac{1}{2}\nabla_{u_{i}}\big(T_{as}(u_{j}, \overline{u}_{j}, \overline{u}_{i})\big)
-\frac{1}{2}\nabla_{\overline{u}_{i}}\big(T_{as}(u_{j}, \overline{u}_{j}, u_{i})\big)
-\frac{1}{2}T_{as}(u_{j}, \overline{u}_{j}, [u_{i}, \overline{u}_{i}])
\end{split}\end{align}
By (\ref{1.11a}) we obtain
\begin{align}\begin{split}\label{3.13}
-\frac{1}{4}\Lambda_{\omega}\big(d(\Lambda_{\omega}T_{as})\big)=&-\frac{1}{16}\Lambda_{\omega}\Lambda_{\omega}(dT_{as})
+\big\langle S^{B}(\overline{u}_{i})\overline{u}_{j}, \nabla_{u_{i}}^{TX}u_{j}\big\rangle
+\big\langle S^{B}(u_{j})\overline{u}_{i}, \nabla_{u_{i}}^{TX}\overline{u}_{j}\big\rangle
\\&-\big\langle S^{B}(u_{i})\overline{u}_{j}, \nabla_{\overline{u}_{i}}^{TX}u_{j}\big\rangle
+\big\langle S^{B}(u_{i})u_{j}, \nabla_{\overline{u}_{i}}^{TX}\overline{u}_{j}\big\rangle.
\end{split}\end{align}
Clearly,
\begin{align}\begin{split}\label{3.14}
\big\langle S^{B}(\overline{u}_{i})\overline{u}_{j}, \nabla_{u_{i}}^{TX}u_{j}\big\rangle
+\big\langle S^{B}(u_{j})\overline{u}_{i}, \nabla_{u_{i}}^{TX}\overline{u}_{j}\big\rangle
=&
\big\langle S^{B}(\overline{u}_{i})\overline{u}_{j}, \overline{u}_{k}\big\rangle\cdot
\big\langle\nabla_{u_{i}}^{TX}u_{j}, u_{k}\big\rangle,
\\
\big\langle S^{B}(u_{i})u_{j}, \nabla_{\overline{u}_{i}}^{TX}\overline{u}_{j}\big\rangle
-\big\langle S^{B}(u_{i})\overline{u}_{j}, \nabla_{\overline{u}_{i}}^{TX}u_{j}\big\rangle
=&
\big\langle S^{B}(u_{i})u_{j}, u_{k}\big\rangle\cdot
\big\langle\nabla_{\overline{u}_{i}}^{TX}\overline{u}_{j}, \overline{u}_{k}\big\rangle.
\end{split}\end{align}
Substituting (\ref{3.14}) into (\ref{3.13}) yields (\ref{3.11}). The proof of Lemma \ref{t3.4}
is complete.
\end{proof}

\begin{proof}[Proof of Proposition \ref{t2.4c}] Formula (\ref{3.15}) follows immediately from
(\ref{3.3}), (\ref{3.1}) and (\ref{3.11}).
\end{proof}

%%%%%%%%%%%%%%%%%%%%%%%%%%%%%%%%%%%%%%%%%%%
\section{Bergman Kernel of the Hodge-Dolbeault Operator}\label{s1}
%%%%%%%%%%%%%%%%%%%%%%%%%%%%%%%%%%%%%%%%%%%

In this section we introduce the corresponding Lichnerowicz
formula for the square of the Hodge-Dolbeault operator
and calculate out the curvature operator of the connection $\nabla^{B, \Lambda^{0,\bullet}\otimes L^{p}\otimes E}$
which arises in the Lichnerowicz formula.

%%%%%%%%%%%%%%%%%%%%%%%%%%%%%%%%%%%%%%%%%%%
\subsection{Lichnerowicz formula for the Hodge-Dolbeault operator}\label{s1a}
%%%%%%%%%%%%%%%%%%%%%%%%%%%%%%%%%%%%%%%%%%%
For any $v\in TX\otimes_{\mathbb{R}}\mathbb{C}$ with
decomposition $v=v_{1,0}+v_{0,1}\in T^{(1,0)}X\oplus T^{(0,1)}X$,
let $\overline{v}^{\ast}_{1,0}$ be the metric dual of $v_{1,0}$.
Then $c(v)=\sqrt{2}(\overline{v}^{\ast}_{1,0}\wedge-i_{v_{0,1}})$
defines the Clifford action of $v$ on $\Lambda (T^{\ast(0,1)}X)$,
where $\wedge$ and $i$ denote the standard exterior and interior multiplication,
respectively.

The Chern connection $\nabla^{T^{(1,0)}X}$ on $T^{(1, 0)}X$ induces
the Chern connection $\nabla^{\det(T^{(1,0)}X)}$
on the line bundle $\det(T^{(1,0)}X)$.
Let $\nabla^{Cl}$ denote the Clifford connection on $\Lambda(T^{\ast(0,
1)}X)$ induced canonically by $\nabla^{TX}$ and $\nabla^{\det(T^{(1,0)}X)}$.

If $e^{1}, \ldots, e^{2n}$ denotes an orthonormal frame of $T^{\ast}X$,
then define
\begin{align}\label{1.7}
^{c}(e^{i_{1}}\wedge \cdots \wedge e^{i_{j}})=c(e_{i_{1}})\cdots c(e_{i_{j}}),
\ \ \textup{for}\ i_{1} < \cdots< i_{j}.
\end{align}
In this sense $^{c}B$ is defined for any $B\in \Lambda (T^{\ast}X)\otimes_{\mathbb{R}}\mathbb{C}$
by extending $\mathbb{C}$-linearity.

Recall that $T_{as}$ is defined by (\ref{1.6}). Take $U\in TX$. Let
\begin{align}\label{1.8}
\nabla_{U}^{B,\Lambda^{0, \bullet}}
=\nabla_{U}^{Cl}-\frac{1}{4} \ ^{c}(i_{U}T_{as}) \end{align}
denote the Hermitian connection on $\Lambda (T^{\ast(0, 1)}X)$ induced by $\nabla^{Cl}$ and $T_{as}$.
Then $\nabla^{B, \Lambda^{0, \bullet}}$ is the Clifford connection on the spinor bundle $\Lambda (T^{\ast(0, 1)}X)$
induced by $\nabla^{B}$ on $TX$ and $\nabla^{\det(T^{(1,0)}X)}$ on $\det(T^{(1,0)}X)$.
Here $\Lambda (T^{\ast(0, 1)}X)$ is formally $S(TX)\otimes \det(T^{(1,0)}X)^{1/2}$ and
$S(TX)$ is the fundamental spinor bundle for the (possibly nonexistent) spin structure on $TX$ and
$\det(T^{(1,0)}X)^{1/2}$ is the (possibly nonexistent) square root of $\det(T^{(1,0)}X)$ (cf. \cite[Appendix D,\,p 397]{Mi}).
The connection $\nabla^{B, \Lambda^{0, \bullet}}$ preserves the $\mathbb{Z}$-grading of $\Lambda(T^{\ast(0, 1)}X)$
(cf. \cite[(1.4.27)]{Ma07}).
If $v_{1}, \ldots, v_{n}$ denotes an orthonormal frame of $T^{(1, 0)}X$, set
\begin{align}
e_{2j-1}=\frac{1}{\sqrt{2}}(v_{j}+\overline{v}_{j}), \ \ e_{2j}=\frac{\sqrt{-1}}{\sqrt{2}}(v_{j}-\overline{v}_{j}).
\end{align}
Then $e_{1}, \ldots, e_{2n}$ forms an orthonormal frame of $TX$. Set
\begin{align}
\nabla^{TX}e_{j}=\Gamma^{TX}e_{j}, \ \
\nabla^{\textup{det}(T^{(1, 0)}X)}(v_{1}\wedge\cdots \wedge v_{n})
=\Gamma^{\textup{det}(T^{(1, 0)}X)}(v_{1}\wedge\cdots \wedge v_{n}).
\end{align}
Denote by $\overline{v}^{j}$ the metric dual of $v_{j}$.
It is a consequence of \cite[(1.3.5)]{Ma07} that $\nabla^{B, \Lambda^{0, \bullet}}$ is given,
with respect to the frame
$\{\overline{v}^{j_{1}}\wedge\cdots \wedge \overline{v}^{j_{k}}, \ 1\leqslant j_{1}<\cdots<j_{k}\leqslant n \}$
of $\Lambda (T^{\ast(0, 1)}X)$,  by the local formula
\begin{align}
d+\frac{1}{4}\big\langle \Gamma^{TX}e_{i}, e_{j}\big\rangle c(e_{i})c(e_{j})
+\frac{1}{2}\Gamma^{\textup{det}(T^{(1, 0)}X)}-\frac{1}{4}\ ^{c}(i_{\cdot}T_{as}).
\end{align}
Let $\Gamma^{B, \Lambda^{0, \bullet}}$ be the connection $1$-form of $\nabla^{B, \Lambda^{0, \bullet}}$
(associated to the above frame of $\Lambda (T^{\ast(0, 1)}X)$), i.e.,
\begin{align}\label{1.8a}
\Gamma^{B, \Lambda^{0, \bullet}}=\frac{1}{4}\big\langle \Gamma^{TX}e_{i}, e_{j}\big\rangle  c(e_{i})c(e_{j})
+\frac{1}{2}\Gamma^{\textup{det}(T^{(1, 0)}X)}-\frac{1}{4}\ ^{c}(i_{\cdot}T_{as}).
\end{align}

Denote by $\nabla^{L^{p}\otimes E}$ the Chern connection on $L^{p}\otimes E$ induced by
$\nabla^{L}$ and $\nabla^{E}$. Set
\begin{align}
\nabla^{B, \Lambda^{0,\bullet}\otimes L^{p}\otimes E}=\nabla^{B, \Lambda^{0, \bullet}}\otimes 1+1\otimes \nabla^{L^{p}\otimes E}.
\end{align}
Then $\nabla^{B, \Lambda^{0,\bullet}\otimes L^{p}\otimes E}$ is a Hermitian connection on $\Lambda (T^{\ast(0, 1)}X)\otimes L^{p}\otimes E$.
Let $R^{B, \Lambda^{0,\bullet}\otimes L^{p}\otimes E}$ be the curvature operator of $\nabla^{B, \Lambda^{0,\bullet}\otimes L^{p}\otimes E}$, and let $\Delta^{B, \Lambda^{0,\bullet}\otimes L^{p}\otimes E}$ be the Bochner Laplacian associated to $\nabla^{B, \Lambda^{0,\bullet}\otimes L^{p}\otimes E}$, i.e.,
 \begin{align}\label{1.9}
\Delta^{B, \Lambda^{0,\bullet}\otimes L^{p}\otimes E}=-\sum^{2n}_{j=1}\Big[\big(\nabla_{e_{j}}^{B, \Lambda^{0,\bullet}\otimes L^{p}\otimes E}\big)^2-
 \nabla^{B, \Lambda^{0,\bullet}\otimes L^{p}\otimes E}_{\nabla_{e_{j}}^{TX}e_{j}}\Big].
 \end{align}

If $e_{1}, \ldots, e_{2n}$ denotes an
orthonormal frame of $TX$, then set
\begin{align}\label{1.10}|A|^{2}=\sum_{i<j<k}|A(e_{i}, e_{j},
e_{k})|^2, \ \ \textup{for}\ A\in \Lambda^{3}(T^{\ast}X).
\end{align}
The following Lichnerowicz formula \cite[(1.4.29)]{Ma07} for
$D^2_{p}$ holds:
\begin{align}\label{1.11} \begin{split}
 D^{2}_{p}=&\Delta^{B, \Lambda^{0,\bullet}\otimes L^{p}\otimes E}+\frac{r^{X}}{4}+\frac{1}{2}pR^{L}(e_{i}, e_{j})c(e_{i})c(e_{j})
 \\&+  \ ^c\big(R^{E}+\frac{1}{2}\textrm{Tr}\big[R^{T^{(1,0)}X}\big]\big)
 -\frac{1}{4} \ ^c(d T_{as})-\frac{1}{8}\big|T_{as} \big|^2.
 \end{split}
  \end{align}
  If $A$ is a $2$-form, then
\begin{align}\begin{split}\label{1.16a}
\frac{1}{4} A( e_{i}, e_{j}) c(e_{i})c(e_{j})=&
-\frac{1}{2} A( v_{j},\overline{v}_{j}\big)
+ A\big( v_{j}, \overline{v}_{k}) \overline{v}^{k}\wedge i_{\overline{v}_{j}}
\\&+\frac{1}{2} A( v_{j}, v_{k})  i_{\overline{v}_{j}}i_{\overline{v}_{k}}
+\frac{1}{2} A ( \overline{v}_{j}, \overline{v}_{k})
\overline{v}^{j}\wedge \overline{v}^{k}\wedge.\end{split}
\end{align}
If $A$ is a skew-adjoint endomorphism of $TX$, then
\begin{align}\begin{split}\label{1.16b}
\frac{1}{4}\big\langle Ae_{i}, e_{j}\big\rangle c(e_{i})c(e_{j})=&
-\frac{1}{2}\big\langle Av_{j},\overline{v}_{j}\big\rangle
+\big\langle Av_{j}, \overline{v}_{k}\big\rangle \overline{v}^{k}\wedge i_{\overline{v}_{j}}
\\&+\frac{1}{2}\big\langle Av_{j}, v_{k}\big\rangle  i_{\overline{v}_{j}}i_{\overline{v}_{k}}
+\frac{1}{2}\big\langle A \overline{v}_{j}, \overline{v}_{k}\big\rangle
\overline{v}^{j}\wedge \overline{v}^{k}\wedge.\end{split}
\end{align}

Set
 \begin{align}\label{1.17}\begin{split}
 \omega_{d,x}=&-2\pi\sum^{q}_{j=1}i_{\overline{v}_{j}}\wedge
 \overline{v}^{j}-2\pi\sum^{n}_{j=q+1}\overline{v}^{j}\wedge
 i_{\overline{v}_{j}},
  \\
 \tau_x=&\pi \textrm{Tr}\big|_{TX}(-{\bf J}^2)^{1/2}=2n\pi. \end{split}
 \end{align}
From (\ref{1.12}), (\ref{1.13}), (\ref{1.16a}) and (\ref{1.17}), we have
 \begin{align}\label{1.18}
 \frac{1}{2}R^{L}(e_{i}, e_{j})c(e_{i})c(e_{j})=-2\omega_{d}-\tau.
 \end{align}

  \begin{defn}
  The Bergman kernel $P_{p}(x,y)\ (x, y\in X)$ is the smooth kernel
  with respect to $dv_{X}(y)$ of the orthogonal projection $P_{p}$
  from $\Omega^{0, \bullet}(X,L^{p}\otimes E)$ onto $\Ker(D_{p})$. \end{defn}
\noindent  Then $P_{p}(x,x)$ is an element of
  $\textrm{End}\big(\Lambda(T^{\ast(0,1)}X)\otimes E\big)_{x}$. It is a consequence of the Andreotti-Grauert
  vanishing theorem (\ref{1.19}) that the kernel $P^{0,q}_{p}(x,y)$
  coincides with the Bergman kernel $P_{p}(x,y)$ for $p$ large enough. Hence, Theorem \ref{t1.1} in fact gives
  the diagonal asymptotic expansion of the Bergman kernel $P_{p}(x,x)$.

%%%%%%%%%%%%%%%%%%%%%%%%%%%%%%%%%%%%%%%%%%%
\subsection{A formula of the curvature operator $R^{B, \Lambda^{0,\bullet}\otimes L^{p}\otimes E}$}
\label{s1b}
%%%%%%%%%%%%%%%%%%%%%%%%%%%%%%%%%%%%%%%%%%%
Set
\begin{align}\label{1.11d}
Q=[\nabla^{TX}, S^{B}]+S^{B}\wedge S^{B}.
\end{align}
Combining (\ref{1.12b}) and (\ref{1.11d}) yields
\begin{align}\label{1.12d}
R^{B}=R^{TX}+Q.
\end{align}

\noindent
By \cite[(3.4)]{Berline04}, (\ref{1.12d}) and the explanation after (\ref{1.8}), we know that
\begin{align}\label{7.3}
R^{B, \Lambda^{0, \bullet}}=
\frac{1}{4}\big\langle R^{B} e_{i}, e_{j}\big\rangle c(e_{i})c(e_{j})
+\frac{1}{2}\textup{Tr}\big[R^{T^{(1, 0)}X}\big].
\end{align}

\noindent
Of course, we can get (\ref{7.3}) from (\ref{1.8a}) by a direct computation.
Note that since $R^{B}$ is compatible with the complex structure $J$, we obtain from (\ref{1.16b}) and (\ref{7.3})
that $R^{B, \Lambda^{0, \bullet}}$ preserves the $\mathbb{Z}$-grading of $\Lambda(T^{\ast(0, 1)}X)$.
From (\ref{7.3}), we have
\begin{align}\label{1.11b}
R^{B, \Lambda^{0,\bullet}\otimes L^{p}\otimes E}=&R^{B, \Lambda^{0, \bullet}}+pR^{L}+R^{E}
\\ \nonumber =&
\frac{1}{4}\big\langle R^{B} e_{i}, e_{j}\big\rangle c(e_{i})c(e_{j})
+\frac{1}{2}\textup{Tr}\big[R^{T^{(1, 0)}X}\big]+pR^{L}+R^{E}.
\end{align}

%%%%%%%%%%%%%%%%%%%%%%%%%%%%%%%%%%%%%%%%%%%
\section{An Explicit formula of ${\bf b}_{1}$}
%%%%%%%%%%%%%%%%%%%%%%%%%%%%%%%%%%%%%%%%%%%
In this section we provide an explicit formula for the second coefficient ${\bf b}_{1}$
in the diagonal asymptotic expansion (\ref{1.22}).

%%%%%%%%%%%%%%%%%%%%%%%%%%%%%%%%%%%%%%%%%%%
\subsection{Trivialization}\label{s4a}
%%%%%%%%%%%%%%%%%%%%%%%%%%%%%%%%%%%%%%%%%%%
 Fix a point $x_{0}\in X$.
For $\varepsilon>0$, denote by $B^{X}(x_{0}, \varepsilon)$ (resp. $B^{T_{x_0}X}(0, \varepsilon)$) the open
ball in $X$ (resp. $T_{x_{0}}X$) with the center $x_{0}$ and radius $\varepsilon$.
Then we identify $B^{T_{x_0}X}(0, \varepsilon)$ with $B^{X}(x_{0}, \varepsilon)$
by the exponential map $Z\mapsto \textup{exp}^{X}_{x_{0}}(Z)$ for $Z\in T_{x_{0}}X$.
Let $v_{1}, \ldots, v_{n}$ be an orhonormal basis of $T^{(1, 0)}_{x_{0}}X$ such that
the equations (\ref{1.13}) hold at the point $x=x_{0}$.
Set
\begin{align}
e_{2j-1}=\frac{1}{\sqrt{2}}(v_{j}+\overline{v}_{j}), \
e_{2j}=\frac{\sqrt{-1}}{\sqrt{2}}(v_{j}-\overline{v}_{j}).
\end{align}
Then $e_{1}, \ldots, e_{2n}$ forms an orthonormal basis of $T_{x_{0}}X$. The coordinates on
$T_{x_{0}}X\simeq \mathbb{R}^{2n}$ is given by
\begin{align}
(Z_{1}, \cdots, Z_{2n})\in\mathbb{R}^{2n}\mapsto \sum_{j=1}^{2n}Z_{j}e_{j}\in T_{x_{0}}X.
\end{align}

We identify $L_{Z}, E_{Z}$ and $(E_{p})_{Z}$ for $Z\in B^{T_{x_{0}}X}(0,\varepsilon)$ to $L_{x_{0}}, E_{x_{0}}$
and $(E_{p})_{x_{0}}$ by parallel transport with respect to the connection $\nabla^{L},
\nabla^{E}$ and $\nabla^{B, \Lambda^{0, \bullet}\otimes L^{p}\otimes E}$ along the curve $u\mapsto uZ, u\in [0,1]$.
We denote by $\Gamma^{L}, \Gamma^{E}$ and $\Gamma^{B, \Lambda^{0, \bullet}\otimes L^{p}\otimes E}$ the corresponding connection form
of $\nabla^{L}, \nabla^{E}$ and $\nabla^{B, \Lambda^{0, \bullet}\otimes L^{p}\otimes E}$ on $B^{X}(x_{0}, \varepsilon)$, respectively.
Then by \cite[(1.2.30)]{Ma07},
\begin{align}\label{2.2a}
\Gamma_{x_{0}}^{\bf \bullet}=0,\ \ \textup{for}\
\Gamma^{\bf \bullet}=\Gamma^{L}, \Gamma^{E}\ \textup{and} \ \Gamma^{B, \Lambda^{0, \bullet}\otimes L^{p}\otimes E}.
\end{align}

%%%%%%%%%%%%%%%%%%%%%%%%%%%%%%%%%%%%%%%%%%%
\subsection{Taylor expansion of the operator $\mathcal{L}^{t}_{2}$}
%%%%%%%%%%%%%%%%%%%%%%%%%%%%%%%%%%%%%%%%%%%
Let $dv_{TX}$ be the Riemannian volume form on $(T_{x_{0}}X, g^{T_{x_{0}}X})$.
Let $k(Z)$ be the smooth positive function defined by the equation $dv_{X}(Z)=k(Z)dv_{TX}(Z)$
with $k(0)=1$. Set ${\bf E}=\Lambda^{q}(T^{\ast(0,1)}X)\otimes E$.

Let $s_{L}$ be an unit vector of $L_{x_{0}}$. Using $s_{L}$ and the above trivialization $\S$\ref{s4a},
we get an isometry $E^{q}_{p}\simeq {\bf E}_{x_{0}}$ on $B^{T_{x_{0}}X}(0,\varepsilon)$.
Under our identification, $h^{E^{q}_{p}}$ is $h^{{\bf E}_{x_{0}}}$ on $B^{T_{x_{0}}X}(0,\varepsilon)$.
If $s\in C^{\infty}(T_{x_{0}}X, {\bf E}_{x_{0}})$, then set
\begin{align}\label{2.2}
\big\|s\big\|^{2}_{0,0}=\int_{\mathbb{R}^{2n}}\big|
s(Z)\big|_{h_{x_{0}}^{{\bf E}}}^{2}dv_{TX}(Z).
\end{align}

We denote by $\nabla_{U}$ the ordinary differentiation operator on $T_{x_{0}}X$ in the direction
$U$. If $\alpha=(\alpha_{1}, \cdots, \alpha_{2n})$ is a multi-index, set
$Z^{\alpha}=Z^{\alpha_{1}}_{1}\cdot\cdot\cdot Z^{\alpha_{2n}}_{2n}$. Let $(\partial^{\alpha}R^{L})_{x_{0}}$
be the tensor
$(\partial^{\alpha}R^{L})_{x_{0}}(e_{i}, e_{j})=\partial^{\alpha}\big(R^{L}(e_{i},e_{j})\big)_{x_{0}}$. We denote by
$\mathcal{R}=\sum_{j}Z_{j}e_{j}=Z$ the radical vector field on $\mathbb{R}^{2n}$.
For $s\in C^{\infty}(B^{T_{x_{0}}X}(0,\varepsilon), {\bf E}_{x_{0}})$ and $Z\in B^{T_{x_{0}}X}(0,\varepsilon)$,
for $t=\frac{1}{\sqrt{p}}$, set
\begin{align}\begin{split}\label{2.3}
(\delta_{t}s)(Z)=s(Z/t), \ \ \ \ & \nabla_{t}=\delta_{t}^{-1}tk^{\frac{1}{2}}
\nabla^{B, \Lambda^{0, \bullet}\otimes L^{p}\otimes E}k^{-\frac{1}{2}}\delta_{t},
 \\  \nabla_{0,\bullet}=\nabla_{\bullet}+\frac{1}{2}R^{L}_{x_{0}}(\mathcal{R}, {\sbt}\ ), \ \ \ \ &
\mathcal{L}^{t}_{2}=\delta_{t}^{-1}t^{2}k^{\frac{1}{2}}D^{2}_{p}k^{-\frac{1}{2}}\delta_{t}.
\end{split}\end{align}

It is a consequence of our trivialization that $\mathcal{L}^{t}_{2}$ is self-adjoint
with respect to $\big\|\cdot\big\|_{0,0}$ on
$C^{\infty}_{0}(B^{T_{x_{0}}X}(0,\varepsilon/t), {\bf E}_{x_{0}})$.
We adopt the convention that all tensors will be
evaluated at the base point $x_{0}\in X$, and most of the time, we will omit the subscript $x_{0}$.
Let $\mathcal{L}_{0}, \mathcal{O}'_{1}, \mathcal{O}'_{2}$ be the operators defined as
\cite[(2.5)]{Ma06},\cite[(1.30)]{Ma08}:
\begin{align}\label{2.4}
\mathcal{L}_{0}=&-\sum_{j}(\nabla_{0,e_{j}})^{2}-2n\pi,
\nonumber \\
\mathcal{O}'_{1}(Z)=&-\frac{2}{3}(\partial_{j}R^{L})_{x_{0}}(\mathcal{R}, e_{i})Z_{j}\nabla_{0,e_{i}}
-\frac{1}{3}(\partial_{i}R^{L})_{x_{0}}(\mathcal{R}, e_{i}),
\nonumber \\
\mathcal{O}'_{2}(Z)=&\frac{1}{3}\Big\langle R_{x_{0}}^{TX}(\mathcal{R},e_{i})\mathcal{R}, e_{j}\Big\rangle
\nabla_{0,e_{i}}\nabla_{0,e_{j}}
\\&
+\Big[\frac{2}{3}\Big\langle R_{x_{0}}^{TX}(\mathcal{R},e_{j})e_{j},e_{i}\Big\rangle
-\Big(\frac{1}{2}\sum_{|\alpha|=2}
(\partial^{\alpha} R^{L})_{x_{0}}\frac{Z^{\alpha}}{\alpha!}+R^{E}_{x_{0}}\Big)(\mathcal{R},e_{i})
\Big]\nabla_{0,e_{i}}
\nonumber \\
&-\frac{1}{4}\nabla_{e_{i}}
\Big(\sum_{|\alpha|=2}(\partial^{\alpha} R^{L})_{x_{0}}\frac{Z^{\alpha}}{\alpha!}(\mathcal{R},e_{i})\Big)
-\frac{1}{9}\sum_{i}\Big[\sum_{j}(\partial_{j} R^{L})_{x_{0}}(\mathcal{R},e_{i})Z_{j}\Big]^{2}
\nonumber \\&-\frac{1}{12}\Big[\mathcal{L}_{0},
\Big\langle R_{x_{0}}^{TX}(\mathcal{R},e_{i})\mathcal{R}, e_{i}\Big\rangle_{x_{0}}\Big].
\nonumber \end{align}

Set
\begin{align*}
\Psi=\frac{1}{4}\ ^{c}(dT_{as})_{x_{0}}.
\end{align*}
Then we have the following analogue of \cite[Th.\,2.2]{Ma06}.
\begin{thm}\label{t2.2}
There are second order differential operators $\mathcal{L}^{0}_{2}, \mathcal{O}_{r}(r\geqslant 1)$
which are self-adjoint with respect to $\big\|\cdot \big\|_{0,0}$ on $C_{0}^{\infty}(\mathbb{R}^{2n},
{\bf E}_{x_{0}})$, and
\begin{align}\begin{split}\label{2.6}
\mathcal{L}^{0}_{2}=&\mathcal{L}_{0}-2\omega_{d,x_{0}},
\\ \mathcal{O}_{1}=&\mathcal{O}'_{1}-\pi \sqrt{-1}
\Big\langle \big(\nabla^{B}_{\mathcal{R}}{\bf J}\big)e_{i}, e_{j}\Big\rangle c(e_{i})c(e_{j}),
\\ \mathcal{O}_{2}=&\mathcal{O}'_{2}-R^{B, \Lambda^{0, \bullet}}_{x_{0}}(\mathcal{R}, e_{i})\nabla_{0,e_{i}}
-\frac{\pi}{2}\sqrt{-1}\Big\langle \big(\nabla^{B}\nabla^{B}{\bf J}\big)
_{(\mathcal{R}, \mathcal{R}),x_{0}}e_{i},e_{j}\Big\rangle c(e_{i})c(e_{j})
\\&+\frac{1}{2}\Big(R^{E}_{x_{0}}+\frac{1}{2}\textup{Tr}\big[ R^{T^{(1,0)}X}_{x_{0}}\big]\Big)
(e_{i},e_{j})c(e_{i})c(e_{j})+\frac{1}{4}r^{X}_{x_{0}}-\Psi
\end{split}\end{align}
such that
\begin{align}\label{2.7}
\mathcal{L}^{t}_{2}=\mathcal{L}^{0}_{2}+\sum^{\infty}_{r=1}\mathcal{O}_{r}t^{r}.
\end{align}
\end{thm}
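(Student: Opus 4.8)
The plan is to expand $\mathcal{L}^{t}_{2}=\delta_{t}^{-1}t^{2}k^{1/2}D^{2}_{p}k^{-1/2}\delta_{t}$ in powers of $t$ by feeding the Lichnerowicz formula (\ref{1.11}) into the rescaling (\ref{2.3}), and then matching the result against the known Spin$^{c}$ Dirac expansion of \cite[Th.\,2.2]{Ma06}. The key structural observation is that the Lichnerowicz formula (\ref{1.11}) writes $D^{2}_{p}$ as the sum of the Bochner Laplacian $\Delta^{B,\Lambda^{0,\bullet}\otimes L^{p}\otimes E}$, the curvature term $\frac{1}{2}pR^{L}(e_{i},e_{j})c(e_{i})c(e_{j})$, and a collection of $p$-independent zeroth-order terms, namely $\frac{r^{X}}{4}$, $\,^{c}\big(R^{E}+\frac{1}{2}\textup{Tr}[R^{T^{(1,0)}X}]\big)$, $-\frac{1}{4}\,^{c}(dT_{as})=-\Psi$, and $-\frac{1}{8}|T_{as}|^{2}$. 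Each of these is handled separately under $\delta_{t}^{-1}t^{2}k^{1/2}(\cdot)k^{-1/2}\delta_{t}$.

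First I would treat the Bochner Laplacian. The connection $\nabla^{B,\Lambda^{0,\bullet}\otimes L^{p}\otimes E}=\nabla^{B,\Lambda^{0,\bullet}}\otimes 1+1\otimes\nabla^{L^{p}\otimes E}$ splits as a $p$-large piece carrying $pR^{L}$ and a $p$-independent piece carrying $\Gamma^{B,\Lambda^{0,\bullet}}$ and $\Gamma^{E}$. The rescaled $L^{p}$-part of $\Delta^{B,\cdots}$ together with the rescaled curvature term $\frac{1}{2}pR^{L}(e_{i},e_{j})c(e_{i})c(e_{j})$ is exactly the computation already done in \cite{Ma08, Ma06}: by (\ref{1.18}) the curvature term equals $-2\omega_{d}-\tau$, and after rescaling the harmonic-oscillator part produces $\mathcal{L}_{0}-2\omega_{d,x_{0}}$ at order $t^{0}$, $\mathcal{O}'_{1}$ at order $t^{1}$, and $\mathcal{O}'_{2}$ at order $t^{2}$ from the Taylor expansions of $R^{L}$, $g^{TX}$ and $k$ in normal coordinates, with $\tau$ cancelling the constant $-2n\pi$ as in \cite[(2.5)]{Ma08}. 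This accounts for all the $\mathcal{O}'$-terms in (\ref{2.6}).

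Next I would expand the $p$-independent part of $\nabla^{B,\Lambda^{0,\bullet}}$. Since $\Gamma^{B,\Lambda^{0,\bullet}}_{x_{0}}=0$ by (\ref{2.2a}), its Taylor expansion starts at first order, so under $t^{2}\delta_{t}^{-1}(\cdots)\delta_{t}$ it contributes starting at order $t^{1}$. The leading term of $\Gamma^{B,\Lambda^{0,\bullet}}(Z)$ is $\frac{1}{2}R^{B,\Lambda^{0,\bullet}}_{x_{0}}(\mathcal{R},\cdot)$, so the cross term between this and $\nabla_{0,\bullet}$ in the Laplacian produces, at order $t^{2}$, the term $-R^{B,\Lambda^{0,\bullet}}_{x_{0}}(\mathcal{R},e_{i})\nabla_{0,e_{i}}$; this requires knowing the curvature of $\nabla^{B,\Lambda^{0,\bullet}}$, which is (\ref{7.3}). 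For the $c(e_{i})c(e_{j})$-type terms in $\mathcal{O}_{1}$ and $\mathcal{O}_{2}$, the point is that (\ref{1.8}) writes $\nabla^{B,\Lambda^{0,\bullet}}=\nabla^{Cl}-\frac{1}{4}\,^{c}(i_{\cdot}T_{as})$ and, comparing with the Clifford connection $\nabla^{Cl}$ built from $\nabla^{TX}$, the discrepancy between the rescaled $D^{2}_{p}$ here and the Spin$^{c}$ model of \cite{Ma06} is governed by $\nabla^{B}-\nabla^{TX}=S^{B}$ and hence by $\nabla^{B}{\bf J}$; using (\ref{1.14}) for $\nabla^{B}$ (i.e. $\langle(\nabla^{B}_{U}{\bf J})V,W\rangle=(\nabla^{B}_{U}\omega)(V,W)$) and the Taylor expansion of $\omega=\frac{2\pi}{\sqrt{-1}}\cdot\frac{\sqrt{-1}}{2\pi}R^{L}$, one extracts the $-\pi\sqrt{-1}\langle(\nabla^{B}_{\mathcal{R}}{\bf J})e_{i},e_{j}\rangle c(e_{i})c(e_{j})$ correction at order $t^{1}$ and the $-\frac{\pi}{2}\sqrt{-1}\langle(\nabla^{B}\nabla^{B}{\bf J})_{(\mathcal{R},\mathcal{R})}e_{i},e_{j}\rangle c(e_{i})c(e_{j})$ correction at order $t^{2}$, exactly as the analogous terms arise in \cite[Th.\,2.2]{Ma06} (the almost-complex-structure corrections there are phrased via $\nabla^{X}{\bf J}$, but in the present setup the relevant connection is the Clifford connection $\nabla^{B,\Lambda^{0,\bullet}}$, so ${\bf J}$ is differentiated with $\nabla^{B}$). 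Finally, the genuinely zeroth-order terms $\frac{r^{X}}{4}$, $\,^{c}(R^{E}+\frac{1}{2}\textup{Tr}[R^{T^{(1,0)}X}])$ and $-\Psi=-\frac{1}{4}\,^{c}(dT_{as})$ are multiplied by $t^{2}$ and evaluated at $x_{0}$ (their $Z$-dependence being absorbed into higher-order $\mathcal{O}_{r}$), landing in $\mathcal{O}_{2}$ together with $-\frac{1}{8}|T_{as}|^{2}_{x_{0}}$; here I would note that by a Clifford-algebra identity the term $-\frac{1}{8}|T_{as}|^{2}$ and part of $-\frac{1}{4}\,^{c}(dT_{as})$ reorganize, but in fact with the conventions above $-\frac{1}{8}|T_{as}|^{2}$ is absorbed and the surviving zeroth-order contribution is precisely $\frac{1}{4}r^{X}_{x_{0}}-\Psi+\frac{1}{2}(R^{E}_{x_{0}}+\frac{1}{2}\textup{Tr}[R^{T^{(1,0)}X}_{x_{0}}])(e_{i},e_{j})c(e_{i})c(e_{j})$, as in (\ref{2.6}). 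Self-adjointness of $\mathcal{L}^{0}_{2}$ and each $\mathcal{O}_{r}$ with respect to $\|\cdot\|_{0,0}$ follows from the self-adjointness of $\mathcal{L}^{t}_{2}$ noted after (\ref{2.3}) together with the fact that the expansion (\ref{2.7}) is in integer powers of $t$, so coefficients of distinct powers are individually self-adjoint.

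The main obstacle I anticipate is bookkeeping the almost-complex-structure corrections: one must carefully track how the torsion term $-\frac{1}{4}\,^{c}(i_{\cdot}T_{as})$ inside $\nabla^{B,\Lambda^{0,\bullet}}$ interacts with the rescaled connection $\nabla_{t}$, and verify that the net effect is exactly to replace every $\nabla^{X}{\bf J}$ appearing in \cite[Th.\,2.2]{Ma06} by $\nabla^{B}{\bf J}$ and to shift the curvature term from $R^{TX}$-type to $R^{B,\Lambda^{0,\bullet}}$-type in the first-order coefficient $\mathcal{O}_{2}$, with no spurious extra terms; the potential pitfall is double-counting the $T_{as}$-contributions, which enter both through $\Gamma^{B,\Lambda^{0,\bullet}}$ and through the explicit $-\frac{1}{4}\,^{c}(dT_{as})-\frac{1}{8}|T_{as}|^{2}$ in (\ref{1.11}).
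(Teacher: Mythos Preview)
Your approach is essentially the paper's: both reduce to \cite[Th.\,2.2]{Ma06} and isolate exactly the two places where the present situation differs, namely (i) the curvature governing the Taylor expansion of $\nabla_{t}$ is $R^{B,\Lambda^{0,\bullet}}$ rather than the Levi--Civita Clifford curvature, and (ii) the covariant derivatives of ${\bf J}$ that appear are with respect to $\nabla^{B}$ rather than $\nabla^{TX}$. The paper's proof is terser: it writes down the modified expansion of $\nabla_{t,e_{i}}$ (your point (i) is its formula~(\ref{2.8})) and then records the single identity
\[
\big[\nabla_{U}^{B,\Lambda^{0,\bullet}},\,c(V)\big]=c\big(\nabla_{U}^{B}V\big),
\]
which is precisely the mechanism that forces $\nabla^{B}{\bf J}$ into the Taylor expansion of the Clifford endomorphism $\tfrac{1}{2}R^{L}(e_{i},e_{j})c(e_{i})c(e_{j})=-2\omega_{d}-\tau$ under the $\nabla^{B,\Lambda^{0,\bullet}}$--trivialization. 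Your explanation of (ii) is correct in spirit but somewhat roundabout (you attribute it to tracking the torsion piece $-\tfrac{1}{4}{}^{c}(i_{\cdot}T_{as})$ inside $\nabla^{B,\Lambda^{0,\bullet}}$ and to expanding $\omega$); stating and using the commutator identity above makes the passage from $\nabla^{X}{\bf J}$ in \cite{Ma06} to $\nabla^{B}{\bf J}$ here immediate and removes your worry about double--counting $T_{as}$--contributions.

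One loose end you flag---the fate of the scalar $-\tfrac{1}{8}|T_{as}|^{2}_{x_{0}}$ from the Lichnerowicz formula~(\ref{1.11})---is not addressed in the paper's proof either; note, however, that for the purposes of the subsequent computation of ${\bf b}_{1}$ any scalar summand in $\mathcal{O}_{2}$ is annihilated by $(\mathcal{L}^{0}_{2})^{-1}\mathcal{P}^{N^{\bot}}(\,\cdot\,)\mathcal{P}^{N}$, so even if present it would not affect~(\ref{2.1}).
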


\begin{proof}
We carry out our proof along the lines in \cite[Th.\,2.2]{Ma06}. The details involved differ in two aspects.
First, the formula \cite[(2.11)]{Ma06} there should be replaced by
\begin{align}\label{2.8}
\nabla_{t,e_{i}}=\nabla_{e_{i}}&+\Big(\frac{1}{2}R^{L}_{x_{0}}+\frac{t}{3}(\partial_{k}R^{L})_{x_{0}}Z^{k}
+\frac{t^2}{4}\sum_{|\alpha|=2}(\partial^{\alpha}R^{L})_{x_{0}}\frac{Z^{\alpha}}{\alpha!}+
\frac{t^2}{2}R^{E}_{x_{0}}+\frac{t^2}{2}R^{B, \Lambda^{0, \bullet}}_{x_{0}}\Big)(\mathcal{R},e_{i})\nonumber
\\&-\frac{t^2}{6}\Big\langle R^{TX}_{x_{0}}(e_{i},e_{j})\mathcal{R}, e_{j} \Big\rangle
+O(t^{3}),
\end{align}
which differs from \cite[(2.11)]{Ma06} by the term $R^{B, \Lambda^{0, \bullet}}_{x_{0}}$. Secondly, the analogue
of \cite[(2.12)]{Ma06} is
\begin{align}\label{2.8a}
\big[\nabla_{U}^{B, \Lambda^{0,\bullet}}, c(V)\big]=c(\nabla_{U}^{B}V),\ \textup{for}\ U, V\in TX.
\end{align}
This explains how the Bismut connection $\nabla^{B}$ comes into the expressions (\ref{2.6}), while
in \cite[(2.6)]{Ma06} the Levi-Civita connection $\nabla^{TX}$ appeared. The readers are referred to
\cite[Th.\,2.2]{Ma06} for more details.
\end{proof}

%%%%%%%%%%%%%%%%%%%%%%%%%%%%%%%%%%%%%%%%%%%
\subsection{Explicit expression of the coefficient ${\bf b}_{1}$}\label{s7}
%%%%%%%%%%%%%%%%%%%%%%%%%%%%%%%%%%%%%%%%%%%
We now introduce the complex coordinates $z=(z_{1}, \cdots,
  z_{n})$ such that $v_{j}=\sqrt{2}\frac{\partial}{\partial z_{j}}$ is an
  orthonormal basis of $T_{x_{0}}^{(1, 0)}X$.
  We identify $z$ to $\sum^{n}_{j=1}z_{j}
  \frac{\partial}{\partial z_{j}}$ and $\overline{z}$ to
  $\sum^{n}_{j=1}\overline{z}_{j}\frac{\partial}{\partial
  \overline{z}_{j}}$ when we consider $z$ and $\overline{z}$ as
  vector fields. Then
  $Z=z+\overline{z}$ and
  \begin{align}\label{2.11}
  \big|\frac{\partial}{\partial z_{j}}\big|^2=\big|\frac{\partial}
  {\partial \overline{z}_{j}}\big|^2=\frac{1}{2},
  \ \
  |z|^{2}=|\overline{z}|^2=\frac{1}{2}\big|Z\big|^{2}.
  \end{align}

Set
\begin{align}\label{2.12}
 \xi=(\overline{z}_{1}, \cdots, \overline{z}_{q}, z_{q+1}, \cdots,
 z_{n}),\ \ \overline{\xi}=(z_{1}, \cdots, z_{q}, \overline{z}_{q+1}, \cdots,
 \overline{z}_{n});
 \end{align}
\noindent By (\ref{1.13}),
\begin{align}\label{2.13} {\bf J}\frac{\partial}{\partial \xi_{j}}=
\sqrt{-1}\frac{\partial}{\partial \xi_{j}}, \ \
{\bf J}\frac{\partial}{\partial \overline{\xi}_{j}}=-\sqrt{-1}
\frac{\partial}{\partial \overline{\xi}_{j}}, \ \ \textrm{for}\
j=1, \cdots, n. \end{align}
We also identify $\xi$ to $\sum^{n}_{j=1}\xi_{j}
  \frac{\partial}{\partial \xi_{j}}$ and $\overline{\xi}$ to
  $\sum^{n}_{j=1}\overline{\xi}_{j}\frac{\partial}{\partial
  \overline{\xi}_{j}}$ when we consider $\xi$ and $\overline{\xi}$ as
  vector fields. Then $\xi+\overline{\xi}=Z=z+\overline{z}$ and
  \begin{align}\label{2.15}
  \big|\frac{\partial}{\partial \xi_{j}}\big|^2=\big|\frac{\partial}
  {\partial \overline{\xi}_{j}}\big|^2=\frac{1}{2},
  \ \
  |\xi|^{2}=|\overline{\xi}|^2=\frac{1}{2}\big|Z\big|^{2}.
  \end{align}

  Set $u_{j}=\sqrt{2}\frac{\partial}{\partial \xi_{j}}$ and
  \begin{align}\label{2.16}
  f_{2j-1}=\frac{1}{\sqrt{2}}(u_{j}+\overline{u}_{j}), \ \ \
  f_{2j}=\frac{\sqrt{-1}}{\sqrt{2}}(u_{j}-\overline{u}_{j}), \ \ j=1,\cdots,n.
  \end{align}
  Then $\{u_{1},\cdots, u_{n}\}$ forms an orthonormal basis of $T^{(1, 0)}_{{\bf J}, x_{0}}X$ and
  $\{f_{1},\cdots, f_{2n}\}$ is an orthonormal basis of $T_{x_{0}}X$.

Recall that the operator $\mathcal{L}_{0}$ is defined in (\ref{2.4}).
It is very useful to rewrite $\mathcal{L}_{0}$ by using the creation and
annihilation operators. Set
\begin{align}\label{2.18}
b_{j}=-2\nabla_{0,\frac{\partial}{\partial \xi_{j}}}=-2\frac{\partial}{\partial\xi_{j}}
+\pi\overline{\xi}_{j},\ \
 b^{+}_{j}=2\nabla_{0,\frac{\partial}{\partial \overline{\xi}_{j}}}
 =2\frac{\partial}{\partial \overline{\xi}_{j}}+\pi \xi_{j} \
 \ \ b=(b_{1},\cdots b_{n}).\end{align}
Then for any polynomial $g(\xi, \overline{\xi})$ on $\xi$ and $\overline{\xi}$,
\begin{align}\begin{split}\label{2.19}
[b_{i}, b^{+}_{j}]=&b_{i}b^{+}_{j}-b^{+}_{j}b_{i}=-4\pi \delta_{ij},\ \
[b_{i}, b_{j}]=[b^{+}_{i}, b^{+}_{j}]=0, \\
[g(\xi, \overline{\xi}), b_{j}]=&2\frac{\partial}{\partial \xi_{j}}g(\xi, \overline{\xi}),
\ \ \ \ \  [g(\xi, \overline{\xi}), b^{+}_{j}]=-2\frac{\partial}{\partial \overline{\xi}_{j}}
g(\xi, \overline{\xi}). \end{split}\end{align}

By (\ref{2.18}) and (\ref{2.19}), one gets
\begin{align}
 \mathcal{L}_{0}=\sum^{n}_{j=1}b_{j}b^{+}_{j}.
  \end{align}
We now restate the following result, see \cite[Th.\,8.2.3]{Ma07}.
 \begin{thm}\label{t2.3}
 The spectrum of the restriction of $\mathcal{L}_{0}$ to
 $L^{2}(\mathbb{R}^{2n})$ is given by
 \begin{align}\label{2.21}
 \textup{Spec}(\mathcal{L}_{0}|_{L^2(\mathbb{R}^{2n})})
 =\big\{4\pi\sum^{n}_{j=1}\alpha_{j}\big| \
 \alpha=(\alpha_{1}, \cdots, \alpha_{n})\in \mathbb{N}^{n} \big\}\end{align}
 and an orthogonal basis of the eigenspace of
 $4\pi\sum^{n}_{j=1}\alpha_{j}$ is given by
 \begin{align} \label{2.22}
 b^{\alpha}\big(\xi^{\beta} \textup {exp}
 (-\frac{\pi}{2}\sum^{n}_{j=1}|\xi_{j}|^2)\big), \ \ \  with \ \beta\in
 \mathbb{N}^{n}.
 \end{align}
 \end{thm}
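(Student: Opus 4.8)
The plan is to recognize $\mathcal{L}_0$ as a direct sum of harmonic-oscillator-type operators. Indeed, from the relation $\mathcal{L}_0=\sum_{j=1}^n b_j b_j^+$ together with the commutation relations $[b_i,b_j^+]=-4\pi\delta_{ij}$ and $[b_i,b_j]=[b_i^+,b_j^+]=0$ of \textup{(\ref{2.19})}, the operators $b_j, b_j^+$ behave (up to the sign and the factor $4\pi$) like annihilation and creation operators for $n$ independent harmonic oscillators. First I would fix one index $j$ and analyze the one-variable operator $b_j b_j^+$ acting in the pair of real variables underlying $\xi_j$: writing $b_j^+ b_j = b_j b_j^+ - 4\pi$, one checks that $b_j b_j^+$ is non-negative (it is $b_j^+{}^* b_j^+$ up to a constant, using that $b_j^+$ is the formal adjoint of $-b_j$ with respect to $\|\cdot\|_{0,0}$, which follows from \textup{(\ref{2.18})} by integration by parts), hence its spectrum is contained in $[0,\infty)$, and the kernel of $b_j^+$ is spanned by the Gaussian $\exp(-\tfrac{\pi}{2}|\xi_j|^2)$, which lies in $L^2$.

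Next I would build the eigenfunctions by the standard ladder argument. The Gaussian $\exp(-\tfrac{\pi}{2}\sum_j|\xi_j|^2)$ is annihilated by every $b_j^+$, hence by $\mathcal{L}_0$, so $0$ is an eigenvalue. Applying the polynomial multiplication operators $\xi^\beta$ produces, via the commutator formulas in \textup{(\ref{2.19})}, further elements on which $\mathcal{L}_0$ acts; then applying $b^\alpha = b_1^{\alpha_1}\cdots b_n^{\alpha_n}$ and using $[b_j,\mathcal{L}_0]$ (computed from the commutation relations) shifts the eigenvalue by $4\pi$ for each application of a single $b_j$. A direct induction on $|\alpha|$ shows that the functions in \textup{(\ref{2.22})} are eigenfunctions of $\mathcal{L}_0$ with eigenvalue $4\pi\sum_j\alpha_j$; the $\xi^\beta$ factor does not change the eigenvalue because $b_j^+$ annihilates the Gaussian and $\xi_j$ commutes appropriately, so the $\beta$'s label the multiplicity within each eigenspace.

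To get that \textup{(\ref{2.22})} is an \emph{orthogonal basis} and that \textup{(\ref{2.21})} lists \emph{all} of the spectrum, I would invoke completeness of the Hermite-type functions: the family $\{b^\alpha(\xi^\beta\exp(-\tfrac{\pi}{2}\sum|\xi_j|^2))\}_{\alpha,\beta}$ spans (the closure of) $L^2(\mathbb{R}^{2n})$, since after the change of variables identifying $\xi$-coordinates with the Euclidean ones these are, up to normalization, products of one-dimensional Hermite functions, which form an orthonormal basis of $L^2(\mathbb{R})$. Orthogonality of eigenfunctions belonging to distinct eigenvalues is automatic from self-adjointness of $\mathcal{L}_0$; within a fixed eigenvalue one checks orthogonality by the usual creation/annihilation bookkeeping. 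Since $L^2(\mathbb{R}^{2n})$ is exhausted by these eigenspaces, $\mathcal{L}_0$ has pure point spectrum equal to $\{4\pi\sum\alpha_j : \alpha\in\mathbb{N}^n\}$.

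The main obstacle I expect is not the algebra of the ladder operators — that is routine once the commutation relations are in hand — but rather the functional-analytic bookkeeping: verifying that $b_j^+$ is genuinely the formal adjoint of $-b_j$ on the relevant domain, that the Gaussian-times-polynomial functions indeed lie in $L^2(\mathbb{R}^{2n})$ and are complete, and that no continuous spectrum or additional eigenvalues are hiding. All of this is classical for the harmonic oscillator, so in practice I would simply cite \cite[Th.\,8.2.3]{Ma07} (or the analogous computation in \cite{Ma08}) and present the ladder-operator verification as the content of the proof.
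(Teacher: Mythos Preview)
Your proposal is correct and matches the paper's treatment: the paper does not give a proof at all but simply restates the result and cites \cite[Th.\,8.2.3]{Ma07}, which is precisely the reference you end up invoking. The ladder-operator argument you sketch is exactly the content of that citation, so there is nothing to compare.
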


 Let $P^{N}$ denote the orthogonal projection
  from $\big(L^{2}(\mathbb{R}^{2n},{\bf E}_{x_{0}}), \|\cdot\|_{0,0}\big)$ onto
 $N=\textrm{Ker}(\mathcal{L}_{0})$. Let $P^{N}(Z, Z')$ be the smooth kernel of $P^{N}$ with respect to
 $dv_{TX}(Z')$. From (\ref{2.22}), we get
 \begin{align}P^{N}(Z,Z')=\textup{exp}
 \Big[-\frac{\pi}{2}\sum^{n}_{j=1}\big(|\xi_{j}|^2+|\xi'_{j}|^2\big)
 +\pi\sum^{n}_{j=1}\xi_{j}\overline{\xi}'_{j}\Big]
 \end{align} and \begin{align}\label{2.24}
 b^{+}_{j}P^{N}=0,\ \ (b_{j}P^{N})(Z,Z')=2\pi
 (\overline{\xi}_{j}-\overline{\xi}'_{j})P(Z,Z'), \ \ \textrm{for}\ j=1, \cdots, n.
 \end{align}

 Let $\mathcal{P}^{N}$ be the orthogonal projection from $\big(L^2(\mathbb{R}^{2n}, {\bf
 E}_{x_0}), \|\cdot\|_{0,0}\big)$ onto $\textrm{Ker}(\mathcal{L}^0_{2})$, and
 $\mathcal{P}^{N}(Z,Z')$ be its smooth kernel with respect to $dv_{TX}(Z')$.
 Set $P^{N^{\bot}}=\textrm{Id}-P^{N}, \mathcal{P}^{N^{\bot}}=\textrm{Id}-\mathcal{P}^{N}$.
 Since \begin{align}\label{2.25}
 \omega_{d}\big|_{(\det(\overline{W})^{\ast})^{\bot}}\leqslant -2\pi,
 \end{align}
  we find that
 \begin{align}\label{2.26}
 \mathcal{P}^{N}(Z,Z')=P^{N}(Z,Z')I_{\det(\overline{W}^{\ast})\otimes E}.
  \end{align}

It is a consequence of Proposition \ref{t3.1} that the second equality of (\ref{2.6})
reduces to the following formula
\begin{align}\label{2.26b}
\mathcal{O}_{1}=\mathcal{O}'_{1}-8\sqrt{-1}\pi
\Big\langle (\nabla_{\mathcal{R}}^{B}{\bf J})\frac{\partial}{\partial z_{j}},
\frac{\partial}{\partial \overline{z}_{k}}\Big\rangle d\overline{z}_{k}\wedge
i_{\frac{\partial}
{\partial \overline{z}_{j}}}.
\end{align}
\noindent Formula (\ref{2.26b}) differs from \cite[(2.6)]{Ma06} by the presence of $\nabla_{\mathcal{R}}^{B}$
instead of $\nabla_{\mathcal{R}}^{X}$. By repeating the proof of \cite[Th.\,2.3]{Ma06} we obtain the following result.
  \begin{thm}\label{t2.4}
  The following relation holds:
  \begin{align}\label{2.27}
  \mathcal{P}^{N}\mathcal{O}_{1}\mathcal{P}^{N}=0.
  \end{align}
  \end{thm}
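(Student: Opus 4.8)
The plan is to split $\mathcal{O}_{1}$ according to $(\ref{2.26b})$ as $\mathcal{O}_{1}=\mathcal{O}'_{1}+\mathcal{O}_{1}^{{\bf J}}$, where $\mathcal{O}_{1}^{{\bf J}}$ denotes the form-valued second summand on the right-hand side of $(\ref{2.26b})$, and to prove $\mathcal{P}^{N}\mathcal{O}'_{1}\mathcal{P}^{N}=0$ and $\mathcal{P}^{N}\mathcal{O}_{1}^{{\bf J}}\mathcal{P}^{N}=0$ separately. By $(\ref{2.26})$ we have $\mathcal{P}^{N}=P^{N}\otimes I_{\det(\overline{W}^{\ast})\otimes E}$, with $P^{N}$ acting on the $L^{2}(\mathbb{R}^{2n})$-factor; moreover $\mathcal{O}'_{1}$ acts trivially on the $\Lambda(T^{\ast(0,1)}X)\otimes E$-factor, while $\mathcal{O}_{1}^{{\bf J}}$ is a sum of terms, each a multiplication operator on $L^{2}(\mathbb{R}^{2n})$ tensored with an endomorphism of $\Lambda(T^{\ast(0,1)}X)\otimes E$. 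Hence the two sandwiches decouple.

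For the scalar part, $\mathcal{O}'_{1}$ from $(\ref{2.4})$ gives $\mathcal{P}^{N}\mathcal{O}'_{1}\mathcal{P}^{N}=\big(P^{N}\mathcal{O}'_{1}P^{N}\big)\otimes I_{\det(\overline{W}^{\ast})\otimes E}$, and the identity $P^{N}\mathcal{O}'_{1}P^{N}=0$ is proved exactly as in \cite[Th.\,2.3]{Ma06} and \cite{Ma08}: one rewrites $\mathcal{O}'_{1}$ in the creation/annihilation operators $b_{j},b_{j}^{+}$ of $(\ref{2.18})$ via $\nabla_{0,\partial/\partial\xi_{j}}=-\frac12 b_{j}$, $\nabla_{0,\partial/\partial\overline{\xi}_{j}}=\frac12 b_{j}^{+}$, uses $b_{j}^{+}P^{N}=0$ from $(\ref{2.24})$ and $P^{N}b_{j}=0$ from the explicit formula for $P^{N}(Z,Z')$, normal-orders with the help of $(\ref{2.19})$ (all $b_{j}$'s to the left, all $b_{j}^{+}$'s to the right), and checks that the only monomial that can survive the sandwich — a multiplication operator by a polynomial in $Z$ of degree $\leqslant 1$ — contributes $0$, using only $dR^{L}=0$. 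No feature of the Bismut connection enters here; this step is verbatim \cite{Ma06}.

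For the form-valued part I would invoke Proposition \ref{t3.1}. With $A=\nabla_{\mathcal{R}}^{B}{\bf J}$, formula $(\ref{1.16b})$ a priori writes $\frac14\langle Ae_{i},e_{j}\rangle c(e_{i})c(e_{j})$ as a sum of four types of terms; Proposition \ref{t3.1} kills three of them — $\langle Av_{j},v_{k}\rangle$ and $\langle A\overline{v}_{j},\overline{v}_{k}\rangle$ because $\nabla^{B}{\bf J}$ preserves $T^{(1,0)}X$ and $T^{(0,1)}X$, and $\langle Av_{j},\overline{v}_{j}\rangle$ because $\nabla^{B}{\bf J}$ exchanges $W$ and $W^{\bot}$ — leaving $\mathcal{O}_{1}^{{\bf J}}=\sum_{j,k}\phi_{jk}(Z)\,\overline{v}^{k}\wedge i_{\overline{v}_{j}}$ with scalar coefficients $\phi_{jk}$ linear in $Z$; the same exchange property forces $\phi_{jk}=0$ unless $j\leqslant q<k$ or $k\leqslant q<j$. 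On the other hand, since $\det(\overline{W}^{\ast})=\mathbb{C}\cdot(\overline{v}^{1}\wedge\cdots\wedge\overline{v}^{q})$, a direct computation shows that $I_{\det(\overline{W}^{\ast})}\big(\overline{v}^{k}\wedge i_{\overline{v}_{j}}\big)I_{\det(\overline{W}^{\ast})}$ equals $I_{\det(\overline{W}^{\ast})}$ when $j=k\leqslant q$ and vanishes otherwise. Since $j=k\leqslant q$ is incompatible with the index support of the $\phi_{jk}$, we get $I_{\det(\overline{W}^{\ast})\otimes E}\,\mathcal{O}_{1}^{{\bf J}}\,I_{\det(\overline{W}^{\ast})\otimes E}=0$, hence $\mathcal{P}^{N}\mathcal{O}_{1}^{{\bf J}}\mathcal{P}^{N}=0$. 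Combining the two parts gives $\mathcal{P}^{N}\mathcal{O}_{1}\mathcal{P}^{N}=0$.

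The only genuinely new ingredient compared with \cite[Th.\,2.3]{Ma06} is that $\mathcal{O}_{1}^{{\bf J}}$ now carries $\nabla^{B}$ in place of $\nabla^{X}$; the reason the argument still closes is precisely $\nabla^{B}J=0$, which via Proposition \ref{t3.1} endows $\nabla^{B}{\bf J}$ with exactly the $W$–$W^{\bot}$ block structure needed to annihilate the $\det(\overline{W}^{\ast})$-component of $\mathcal{O}_{1}^{{\bf J}}$. I therefore expect the only point requiring real verification to be this compatibility of Proposition \ref{t3.1} with $(\ref{1.16b})$; everything else is routine bookkeeping identical to \cite{Ma06, Ma08}.
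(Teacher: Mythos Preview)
Your proposal is correct and follows essentially the same approach as the paper. The paper's own proof is a one-line referral: it observes that, thanks to Proposition~\ref{t3.1}, the form-valued part of $\mathcal{O}_{1}$ has exactly the shape $(\ref{2.26b})$ (i.e.\ the same as \cite[(2.6)]{Ma06} with $\nabla^{B}$ in place of $\nabla^{X}$), and then says ``repeat the proof of \cite[Th.\,2.3]{Ma06}''. Your write-up simply unpacks what that repetition amounts to --- the creation/annihilation argument for $\mathcal{O}'_{1}$ and the $W$--$W^{\bot}$ index analysis for $\mathcal{O}_{1}^{{\bf J}}$ --- and you have correctly isolated Proposition~\ref{t3.1} as the one genuinely new ingredient that makes the \cite{Ma06} argument go through for $\nabla^{B}$.
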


Set
 \begin{align}\begin{split}\label{2.31b}
 F_{2}=&(\mathcal{L}_{2}^{0})^{-1}\mathcal{P}^{N^{\bot}}\mathcal{O}_{1}
 (\mathcal{L}_{2}^{0})^{-1}\mathcal{P}^{N^{\bot}}\mathcal{O}_{1}\mathcal{P}^{N}-
 (\mathcal{L}_{2}^{0})^{-1}\mathcal{P}^{N^{\bot}}\mathcal{O}_{2}\mathcal{P}^{N}
 \\&+\mathcal{P}^{N}\mathcal{O}_{1}(\mathcal{L}_{2}^{0})^{-1}\mathcal{P}^{N^{\bot}}
 \mathcal{O}_{1}(\mathcal{L}_{2}^{0})^{-1}\mathcal{P}^{N^{\bot}}-
 \mathcal{P}^{N}\mathcal{O}_{2}(\mathcal{L}_{2}^{0})^{-1}\mathcal{P}^{N^{\bot}}
 \\&+(\mathcal{L}_{2}^{0})^{-1}\mathcal{P}^{N^{\bot}}\mathcal{O}_{1}\mathcal{P}^{N}
 \mathcal{O}_{1}(\mathcal{L}_{2}^{0})^{-1}\mathcal{P}^{N^{\bot}}-
 \mathcal{P}^{N}\mathcal{O}_{1}\mathcal{P}^{N^{\bot}}(\mathcal{L}_{2}^{0})^{-2}
 \mathcal{O}_{1}\mathcal{P}^{N}.
 \end{split}\end{align}
By Theorem \ref{t2.4} and the same argument as in \cite[Sec.\,1.5-1.6]{Ma08}, we
 get
 \begin{align}\label{2.31c}
 {\bf b}_{1}(x_{0})=I_{{\bf E}}\cdot F_{2}(0,0) \cdot
 I_{{\bf E}},
 \end{align}
 where $I_{{\bf E}}$ denotes the orthogonal projection from
 $\Lambda (T^{\ast(0, 1)}X)\otimes E$ onto ${\bf E}$.
 We only need to computer the first two terms and the last two terms in (\ref{2.31b}), since the third and
 fourth term in (\ref{2.31b}) are adjoint of the first two terms by Theorem \ref{t2.2}.

\section{Computations of the coefficient ${\bf b}_{1}$}
In this section we calculate out term by term the expressions (\ref{2.31b}) of ${\bf b}_{1}$ and then prove
Corollary \ref{t1.2}.
\begin{lemma}\label{t2.5a}
For every 2-form  $A$, we have
\begin{align}\label{2.44a}
^{c}(A)\cdot I_{\textup{det}(\overline{W}^{\ast})\otimes E}=&
\bigg[-2A(\frac{\partial}{\partial \xi_{j}},\frac{\partial}{\partial\overline{\xi}_{j}})
+4\sum_{j=1}^{q}\sum_{k=q+1}^{n}
A(\frac{\partial}{\partial\overline{\xi}_{j}},\frac{\partial}{\partial\overline{\xi}_{k}})
d\overline{\xi}_{k}\wedge i_{\frac{\partial}{\partial \xi_{j}}}
\nonumber \\&+
4\sum_{j,k=1}^{q}
A(\frac{\partial}{\partial\overline{\xi}_{j}}, \frac{\partial}{\partial\overline{\xi}_{k}})
i_{\frac{\partial}{\partial \xi_{j}}}i_{\frac{\partial}{\partial \xi_{k}}}+
\sum_{j,k=q+1}^{n}
A(\frac{\partial}{\partial\overline{\xi}_{j}},\frac{\partial}{\partial\overline{\xi}_{k}})d\overline{\xi}_{j}\wedge d\overline{\xi}_{k}\bigg]I_{\textup{det}(\overline{W}^{\ast})\otimes E}.
\end{align}
Moreover, if $A$ is compatible with the complex structure $J$, then
\begin{align}\label{2.44b}
^{c}(A)\cdot I_{\textup{det}(\overline{W}^{\ast})\otimes E}=&
\bigg[-2A(\frac{\partial}{\partial \xi_{j}},\frac{\partial}{\partial\overline{\xi}_{j}})
+4\sum_{j=1}^{q}\sum_{k=q+1}^{n}
A(\frac{\partial}{\partial\overline{\xi}_{j}},\frac{\partial}{\partial\overline{\xi}_{k}})
d\overline{\xi}_{k}\wedge i_{\frac{\partial}{\partial \xi_{j}}}
\bigg]I_{\textup{det}(\overline{W}^{\ast})\otimes E}.
\end{align}
\end{lemma}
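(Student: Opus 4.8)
The plan is to compute the action of the Clifford operator $^{c}(A)$ on the subspace $\det(\overline{W}^{\ast})\otimes E$ directly from the frame-adapted formula for $\frac14 A(e_i,e_j)c(e_i)c(e_j)$, expressed in the $u_j$-basis rather than the $v_j$-basis. First I would recall that $\Lambda(T^{\ast(0,1)}X)$ is generated by exterior multiplication with the $d\overline v^j$'s, but since the splitting $TX\otimes\C = T^{(1,0)}_{\bf J}X\oplus T^{(0,1)}_{\bf J}X$ via the $u_j$ (with $u_j=\overline v_j$ for $j\le q$ and $u_j=v_j$ for $j>q$) is the one adapted to the Bergman kernel, I would re-express the Clifford action $c(u_j)=\sqrt2(\,\overline u^{\ast}_{1,0}\wedge - i_{u_{0,1}}\,)$ accordingly; note that for $j\le q$ the $(1,0)$-component of $u_j$ in the \emph{original} complex structure $J$ is actually $\overline v_j$ sitting in $T^{(0,1)}X$, so the roles of creation and annihilation get swapped on exactly the $W$-indices. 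This is what produces the asymmetry between the $j\le q$ and $j\ge q+1$ ranges in the stated formula, and it is precisely encoded in the model operator $\omega_{d}$ of (\ref{1.17}).

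Next I would start from the general identity (\ref{1.16a}) for $\frac14 A(e_i,e_j)c(e_i)c(e_j)$ written in the $v_j$-frame, and perform the substitution $v_j\leftrightarrow \overline v_j$ for $j\le q$ to rewrite everything in terms of $u_j,\overline u_j$ and the dual exterior/interior operators $d\overline\xi_j\wedge$, $i_{\partial/\partial\xi_j}$. Keeping careful track of which terms are $\wedge\wedge$, which are $\wedge\, i$, which are $i\,i$, and which is the scalar trace term, one collects: the scalar $-2A(\tfrac{\partial}{\partial\xi_j},\tfrac{\partial}{\partial\overline\xi_j})$; a $d\overline\xi_k\wedge i_{\partial/\partial\xi_j}$ term that, after projecting onto $\det(\overline W^{\ast})\otimes E$, survives only when $j\le q$ (so that $i_{\partial/\partial\xi_j}$ does not annihilate $\det(\overline W^{\ast})$) and $k\ge q+1$ (so that $d\overline\xi_k\wedge$ does not annihilate it either); an $i_{\partial/\partial\xi_j}i_{\partial/\partial\xi_k}$ term surviving only for $j,k\le q$; and a $d\overline\xi_j\wedge d\overline\xi_k\wedge$ term surviving only for $j,k\ge q+1$. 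Cross terms (one index $\le q$, the other $\ge q+1$) in the double-creation and double-annihilation pieces are killed by $I_{\det(\overline W^{\ast})\otimes E}$, which is exactly why the index ranges in (\ref{2.44a}) are restricted. The coefficients $-2,\,4,\,4,\,1$ come out of the $\tfrac12$'s and $\sqrt2$'s in $c(v)$ after this bookkeeping; I would double-check them against the degenerate special case $A=R^L$, which must reproduce (\ref{1.18}) and (\ref{1.17}).

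For the second assertion (\ref{2.44b}), I would invoke the compatibility of $A$ with $J$: when $A$ is $J$-invariant, $A$ pairs only $(1,0)$ with $(0,1)$ vectors in the $J$-complex structure and vanishes on $T^{(1,0)}X\otimes T^{(1,0)}X$ and $T^{(0,1)}X\otimes T^{(0,1)}X$. Translating this into the $u$-frame via the $v_j\leftrightarrow\overline v_j$ swap for $j\le q$, one checks that $A(\tfrac{\partial}{\partial\overline\xi_j},\tfrac{\partial}{\partial\overline\xi_k})$ vanishes whenever $j$ and $k$ lie in the same range ($\{1,\dots,q\}$ or $\{q+1,\dots,n\}$), so the double-annihilation term (which needs $j,k\le q$) and the double-creation term (which needs $j,k\ge q+1$) both drop out entirely, leaving only the scalar term and the mixed $j\le q$, $k\ge q+1$ term. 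This gives (\ref{2.44b}) at once.

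The main obstacle will be the sign and combinatorial bookkeeping in the basis change $v_j\leftrightarrow\overline v_j$ on the $W$-indices: under this swap, exterior multiplication by $\overline v^j$ and interior multiplication by $\overline v_j$ trade places, and one must be scrupulous about the induced orientation/anticommutation signs when reordering Clifford factors, as well as about the factor $\sqrt2$ appearing in each $c(\cdot)$. I expect the cleanest route is \emph{not} to re-derive from scratch but to apply (\ref{1.16a}) (or its endomorphism analogue (\ref{1.16b})) verbatim in an \emph{arbitrary} orthonormal $T^{(1,0)}$-frame, then simply take that frame to be $\{u_j\}$ — a legitimate orthonormal frame of $T^{(1,0)}_{\bf J}X$ by (\ref{2.16}) — and finally apply $I_{\det(\overline W^{\ast})\otimes E}$ on the left, using $i_{\frac{\partial}{\partial\xi_j}}I_{\det(\overline W^{\ast})\otimes E}=0$ for $j\ge q+1$ and $d\overline\xi_k\wedge I_{\det(\overline W^{\ast})\otimes E}=0$ for $k\le q$ to discard the vanishing terms. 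That reduces the whole lemma to one substitution plus a projection argument, with no genuinely new computation.
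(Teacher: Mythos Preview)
Your approach is essentially the paper's own: the paper's proof is the single sentence ``One easily get the result (\ref{2.44a}) from (\ref{1.16b}),'' and your outline of applying (\ref{1.16a}) in the $v_j$-frame, rewriting in $\xi$-coordinates, and then projecting onto $\det(\overline{W}^{\ast})\otimes E$ is exactly how one fills in that sentence.

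One caveat about the ``cleanest route'' in your final paragraph: applying (\ref{1.16a}) verbatim with the frame $\{u_j\}$ does not work as stated. Formula (\ref{1.16a}) is derived from the Clifford action $c(v)=\sqrt{2}(\overline{v}^{\ast}_{1,0}\wedge - i_{v_{0,1}})$, where the $(1,0)/(0,1)$ splitting is the one induced by $J$; it therefore requires an orthonormal frame of $T^{(1,0)}X$, not of $T^{(1,0)}_{\bf J}X$. For $j\le q$ one has $u_j=\overline{v}_j\in T^{(0,1)}X$, so $c(u_j)=-\sqrt{2}\,i_{\overline{v}_j}$ is an annihilation operator rather than a creation operator, and the right-hand side of (\ref{1.16a}) is simply false with $u$ in place of $v$. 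You already recognized this in your first paragraph (``the roles of creation and annihilation get swapped on exactly the $W$-indices''), so the correct procedure is the one you describe there: apply (\ref{1.16a}) in the $v$-frame and then perform the index-by-index substitution $v_j\leftrightarrow\overline{v}_j$ for $j\le q$, reading off which of $\overline{v}^k\wedge$ and $i_{\overline{v}_j}$ annihilate $\det(\overline{W}^{\ast})$. That is the computation; there is no shortcut that avoids it.
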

\begin{proof}
One easily get the result (\ref{2.44a}) from (\ref{1.16b}).
\end{proof}

If $A'$ is a antisymmetric endomorphism of $TX$, an analogue of (\ref{2.44a}) holds for $A'$, i.e.,
simply replacing $A(\cdot,\cdot)$ in (\ref{2.44a}) by $\big\langle A'\cdot,\cdot\big\rangle$.

\subsection{The terms in ${\bf b}_{1}$ containing the factor $\mathcal{O}_{1}$}
Set $\mathcal{J}=-2\pi\sqrt{-1}{\bf J}$. From (\ref{2.14}) and (\ref{2.18}),
 we find that
\begin{align}\label{2.30}
\mathcal{O}'_{1}(Z)=-\frac{2}{3}\bigg[ \Big\langle(\nabla_{Z}^{X}{\mathcal J})\mathcal{R},
\frac{\partial}{\partial \xi_{j}}\Big\rangle b^{+}_{j}-b_{j}
\Big\langle(\nabla_{Z}^{X}{\mathcal J})\mathcal{R},
\frac{\partial}{\partial \overline{\xi}_{j}}\Big\rangle\bigg]
\end{align}

By (\ref{2.14}), (\ref{2.26}), (\ref{2.26b}), (\ref{2.44b}) and (\ref{2.30}), we know that
\begin{align}\begin{split}\label{2.47}
&\mathcal{O}_{1}\mathcal{P}^{N}(Z,Z')
\\=&
\bigg[-\frac{2\sqrt{-1}}{3}b_{i}b_{j}\Big\langle(\nabla_{\frac{\partial}{\partial\overline{\xi}_{j}}}
^{X}{\bf J})\overline{\xi}', \frac{\partial}{\partial \overline{\xi}_{i}}\Big\rangle-
\frac{4\pi\sqrt{-1}}{3}b_{i}\Big\langle (\nabla^{X}_{\overline{\xi}'}{\bf J})\overline{\xi}',
\frac{\partial}{\partial \overline{\xi}_{i}}\Big\rangle
\\&
-4\sqrt{-1}
\sum_{j=1}^{q}\sum_{k=q+1}^{n}\Big\langle(\nabla_{\frac{\partial}{\partial \overline{\xi}_{m}}}^{B}{\bf J})
\frac{\partial}{\partial \overline{\xi}_{j}}, \frac{\partial}{\partial \overline{\xi}_{k}}\Big\rangle
d\overline{\xi}_{k}\wedge i_{\frac{\partial}{\partial \xi_{j}}}
(b_{m}+2\pi\overline{\xi}'_{m})
\\&
-8\pi\sqrt{-1}
\sum_{j=1}^{q}\sum_{k=q+1}^{n}\Big\langle(\nabla_{\xi}^{B}{\bf J})
\frac{\partial}{\partial \overline{\xi}_{j}}, \frac{\partial}{\partial \overline{\xi}_{k}}\Big\rangle
d\overline{\xi}_{k}\wedge i_{\frac{\partial}{\partial \xi_{j}}}
\bigg]P^{N}(Z, Z')I_{\textup{det} (\overline{W}^{\ast})\otimes E}.
\end{split}\end{align}

From Theorem \ref{t2.3} and (\ref{2.47}),
\begin{align}\begin{split}\label{2.48}
&\big((\mathcal{L}^{0}_{2})^{-1}\mathcal{P}^{N^{\bot}}\mathcal{O}_{1}\mathcal{P}^{N}\big)(Z,Z')
\\=&
-\sqrt{-1}\bigg[\frac{b_{i}b_{j}}{12\pi}\Big\langle(\nabla_{\frac{\partial}{\partial\overline{\xi}_{j}}}
^{X}{\bf J})\overline{\xi}', \frac{\partial}{\partial \overline{\xi}_{i}}\Big\rangle+
\frac{b_{i}}{3}\Big\langle (\nabla^{X}_{\overline{\xi}'}{\bf J})\overline{\xi}',
\frac{\partial}{\partial \overline{\xi}_{i}}\Big\rangle
\\&+
\sum^{q}_{j=1}\sum^{n}_{k=q+1}\Big\langle(\nabla_{\frac{\partial}{\partial \overline{\xi}_{m}}}^{B}{\bf J})
\frac{\partial}{\partial \overline{\xi}_{j}}, \frac{\partial}{\partial \overline{\xi}_{k}}\Big\rangle
d\overline{\xi}_{k}\wedge i_{\frac{\partial}{\partial \xi_{j}}}
(\frac{b_{m}}{3\pi}+\overline{\xi}'_{m})
\\&+
\sum^{q}_{j=1}\sum^{n}_{k=q+1}\Big\langle(\nabla_{\xi}^{B}{\bf J})
\frac{\partial}{\partial \overline{\xi}_{j}}, \frac{\partial}{\partial \overline{\xi}_{k}}\Big\rangle
d\overline{\xi}_{k}\wedge i_{\frac{\partial}{\partial \xi_{j}}}
\bigg]P^{N}(Z, Z')I_{\textup{det} (\overline{W}^{\ast})\otimes E}.\end{split}
\end{align}
Therefore,
\begin{align}\begin{split}\label{2.49}
 &\Big((\mathcal{L}^{0}_{2})^{-1}\mathcal{P}^{N^{\bot}}\mathcal{O}_{1}\mathcal{P}^{N}\Big)(0,Z')
\\ =& -\frac{\sqrt{-1}}{3}\sum^{q}_{j=1}\sum^{n}_{k=q+1}
\Big\langle(\nabla_{\overline{\xi}'}^{B}{\bf J})
\frac{\partial}{\partial \overline{\xi}_{j}}, \frac{\partial}{\partial \overline{\xi}_{k}}\Big\rangle
d\overline{\xi}_{k}\wedge i_{\frac{\partial}{\partial \xi_{j}}}
P^{N}(0, Z')I_{\textup{det} (\overline{W}^{\ast})\otimes E}.\end{split}
\end{align}
and
\begin{align}\begin{split}\label{2.50}
&\Big((\mathcal{L}^{0}_{2})^{-1}\mathcal{P}^{N^{\bot}}\mathcal{O}_{1}\mathcal{P}^{N}\Big)(Z,0)
\\ =&-\frac{2\sqrt{-1}}{3}\sum^{q}_{j=1}\sum^{n}_{k=q+1}
\Big\langle(\nabla_{\overline{\xi}}^{B}{\bf J})
\frac{\partial}{\partial \overline{\xi}_{j}}, \frac{\partial}{\partial \overline{\xi}_{k}}\Big\rangle
d\overline{\xi}_{k}\wedge i_{\frac{\partial}{\partial \xi_{j}}}
P^{N}(Z, 0)I_{\textup{det} (\overline{W}^{\ast})\otimes E}
\\ &-\sqrt{-1}\sum^{q}_{j=1}\sum^{n}_{k=q+1}
\Big\langle(\nabla_{\xi}^{B}{\bf J})
\frac{\partial}{\partial \overline{\xi}_{j}}, \frac{\partial}{\partial \overline{\xi}_{k}}\Big\rangle
d\overline{\xi}_{k}\wedge i_{\frac{\partial}{\partial \xi_{j}}}
P^{N}(Z, 0)I_{\textup{det} (\overline{W}^{\ast})\otimes E}.
\end{split}\end{align}

By taking adjoint of (\ref{2.49}) and (\ref{2.50}), we find that
\begin{align}\begin{split}\label{2.51}
&\Big(\mathcal{P}^{N}\mathcal{O}_{1}(\mathcal{L}^{0}_{2})^{-1}\mathcal{P}^{N^{\bot}}\Big)(Z',0)
\\=& \frac{\sqrt{-1}}{3}\sum^{q}_{j=1}\sum^{n}_{k=q+1}
\Big\langle(\nabla_{\xi'}^{B}{\bf J})
\frac{\partial}{\partial \xi_{j}}, \frac{\partial}{\partial \xi_{k}}\Big\rangle
P^{N}(Z',0) I_{\textup{det} (\overline{W}^{\ast})\otimes E}
d\xi_{j}\wedge i_{\frac{\partial}{\partial \overline{\xi}_{k}}},
\end{split}\end{align}
and
\begin{align}\begin{split}\label{2.52}
& \Big(\mathcal{P}^{N}\mathcal{O}_{1}(\mathcal{L}^{0}_{2})^{-1}\mathcal{P}^{N^{\bot}}\Big)(0,Z)
\\=&\frac{2\sqrt{-1}}{3}\sum^{q}_{j=1}\sum^{n}_{k=q+1}
\Big\langle(\nabla_{\xi}^{B}{\bf J})
\frac{\partial}{\partial \xi_{j}}, \frac{\partial}{\partial \xi_{k}}\Big\rangle
P^{N}(0,Z)I_{\textup{det} (\overline{W}^{\ast})\otimes E}
d\xi_{j}\wedge i_{\frac{\partial}{\partial \overline{\xi}_{k}}}
\\&+\sqrt{-1}\sum^{q}_{j=1}\sum^{n}_{k=q+1}
\Big\langle(\nabla_{\overline{\xi}}^{B}{\bf J})
\frac{\partial}{\partial \xi_{j}}, \frac{\partial}{\partial \xi_{k}}\Big\rangle
P^{N}(0,Z)I_{\textup{det} (\overline{W}^{\ast})\otimes E}
d\xi_{j}\wedge i_{\frac{\partial}{\partial \overline{\xi}_{k}}}.
\end{split}\end{align}

By (\ref{3.8}), (\ref{2.50}),
(\ref{2.52}) and $\int_{\mathbb{C}}|\xi|^{2}e^{-\pi|\xi|^{2}}=\frac{1}{\pi}$,
we obtain  \begin{align}\begin{split}\label{2.53}
&\Big(\mathcal{P}^{N}\mathcal{O}_{1}\mathcal{P}^{N^{\bot}}
(\mathcal{L}^{0}_{2})^{-2}\mathcal{O}_{1}\mathcal{P}^{N}\Big)(0,0)
\\=&
\frac{4}{9\pi}\sum^{q}_{j=1}\sum^{n}_{k=q+1}
\Big|\big\langle(\nabla_{\frac{\partial}{\partial \xi_{m}}}^{B}{\bf J})
\frac{\partial}{\partial \xi_{j}}, \frac{\partial}{\partial \xi_{k}}\big\rangle\Big|^{2}
I_{\textup{det} (\overline{W}^{\ast})\otimes E}
\\&
+\frac{1}{\pi}\sum^{q}_{j=1}\sum^{n}_{k=q+1}
\Big|\big\langle(\nabla_{\frac{\partial}{\partial \overline{\xi}_{m}}}^{B}{\bf J})
\frac{\partial}{\partial \xi_{j}}, \frac{\partial}{\partial \xi_{k}}\big\rangle\Big|^{2}
I_{\textup{det} (\overline{W}^{\ast})\otimes E}
\\=&
\frac{1}{72\pi}\Big(\big|\nabla^{B}{\bf J}\big|^{2}+
10\big|\big\langle S^{B}(\overline{u}_{i})u_{j}, u_{k}\big\rangle \big|^{2}\Big)
I_{\textup{det} (\overline{W}^{\ast})\otimes E}.
\end{split}\end{align}

By (\ref{2.49}) and (\ref{2.51}),
\begin{align}\begin{split}\label{2.54}
& \Big((\mathcal{L}^{0}_{2})^{-1}\mathcal{P}^{N^{\bot}}\mathcal{O}_{1}\mathcal{P}^{N}\mathcal{O}_{1}
(\mathcal{L}^{0}_{2})^{-1}\mathcal{P}^{N^{\bot}}\Big)(0,0).
\\=&\frac{1}{9\pi}\sum^{q}_{i,j=1}\sum^{n}_{k,l=q+1}
\Big\langle(\nabla_{\frac{\partial}{\partial \overline{\xi}_{m}}}^{B}{\bf J})
\frac{\partial}{\partial \overline{\xi}_{i}}, \frac{\partial}{\partial \overline{\xi}_{l}}\Big\rangle
\Big\langle(\nabla_{\frac{\partial}{\partial \xi_{m}}}^{B}{\bf J})
\frac{\partial}{\partial \xi_{j}}, \frac{\partial}{\partial \xi_{k}}\Big\rangle
d\overline{\xi}_{l}\wedge i_{\frac{\partial}{\partial \xi_{i}}}
I_{\textup{det} (\overline{W}^{\ast})\otimes E}d\xi_{j}\wedge i_{\frac{\partial}{\partial \overline{\xi}_{k}}}.
\end{split}\end{align}

Let $h(Z)$ (resp. $F(Z)$) be homogenous polynomials in $Z$ with degree $1$ (resp. $2$), then
by (\ref{2.24}) and Theorem \ref{t2.3},
\begin{align}\begin{split}\label{2.56}
\big(\mathcal{L}^{-1}_{0}P^{N^\bot}h b_{j}P^{N}\big)(0,0)=&
\big(\mathcal{L}^{-1}_{0}P^{N^\bot}b_{j}hP^{N}\big)(0,0)=-\frac{1}{2\pi}\frac{\partial h}{\partial \xi_{j}},\\
\big(\mathcal{L}^{-1}_{0}P^{N^\bot}FP^{N}\big)(0,0)=&-\frac{1}{4\pi^{2}}\frac{\partial^2 F}{\partial \xi_{j}
\partial \overline{\xi}_{j}}.\end{split}
\end{align}

\noindent
From (\ref{2.24}) and Theorem \ref{t2.3},
one verifies directly that for $1\leqslant j\leqslant q, q+1\leqslant k\leqslant n$,
\begin{align}\begin{split}\label{2.59}
\Big( (\mathcal{L}^{0}_{2})^{-1}h b_{m}d\overline{\xi}_{k}\wedge
i_{\frac{\partial}{\partial \xi_{j}}}
\mathcal{P}^{N}\Big)(0,0)&=\frac{1}{12\pi }\frac{\partial h}
{\partial \xi_{m}}d\overline{\xi}_{k}\wedge
i_{\frac{\partial}{\partial \xi_{j}}}I_{\textup{det}({\overline W}^{\ast})\otimes E},
\\ \Big( (\mathcal{L}^{0}_{2})^{-1}F d\overline{\xi}_{k}\wedge
i_{\frac{\partial}{\partial \xi_{j}}}
\mathcal{P}^{N}\Big)(0,0)&=\frac{1}{24\pi^{2}}\frac{\partial^2 F}
{\partial \xi_{i} \partial\overline{\xi}_{i}}d\overline{\xi}_{k}\wedge
i_{\frac{\partial}{\partial \xi_{j}}}I_{\textup{det}({\overline W}^{\ast})\otimes E}
.\end{split}
\end{align}

\noindent
Moreover, we have for $1\leqslant i, j\leqslant q$ and $q+1\leqslant k, l\leqslant n$,
\begin{align}\label{2.59a}
 \Big( (\mathcal{L}^{0}_{2})^{-1}F d\overline{\xi}_{k}\wedge d\overline{\xi}_{l}
i_{\frac{\partial}{\partial \xi_{i}}}i_{\frac{\partial}{\partial \xi_{j}}}
\mathcal{P}^{N}\Big)(0,0)=\frac{1}{80\pi^{2}}\frac{\partial^2 F}
{\partial \xi_{m} \partial\overline{\xi}_{m}}d\overline{\xi}_{k}\wedge d\overline{\xi}_{l}
i_{\frac{\partial}{\partial \xi_{i}}}
i_{\frac{\partial}{\partial \xi_{j}}}I_{\textup{det}({\overline W}^{\ast})\otimes E}.
\end{align}

Using (\ref{2.14}), (\ref{2.24}), (\ref{2.30}) and (\ref{2.50}) we obtain
\begin{align}\label{2.59b}
\Big( (\mathcal{L}^{0}_{2})^{-1}\mathcal{P}^{N^{\bot}}\mathcal{O}'_{1}(\mathcal{L}^{0}_{2})^{-1}
\mathcal{O}_{1}\mathcal{P}^{N}\Big)(0,0)=0.
\end{align}

\noindent
Then by (\ref{3.8}), (\ref{2.26b}), (\ref{2.50}), (\ref{2.59a}) and (\ref{2.59b}) we get
\begin{align}\label{2.60}
&\Big( (\mathcal{L}^{0}_{2})^{-1}\mathcal{P}^{N^{\bot}}\mathcal{O}_{1}(\mathcal{L}^{0}_{2})^{-1}
\mathcal{O}_{1}\mathcal{P}^{N}\Big)(0,0)
\nonumber \\=&
-\frac{16}{3}\pi\bigg\{
(\mathcal{L}^{0}_{2})^{-1}\mathcal{P}^{N^{\bot}}
\Big[\big\langle (\nabla_{\mathcal{R}}^{B}{\bf J})\frac{\partial}{\partial z_{i}},
\frac{\partial}{\partial \overline{z}_{l}}\Big\rangle d\overline{z}_{l}\wedge
i_{\frac{\partial}
{\partial \overline{z}_{i}}}
\nonumber \\&
\times \sum^{q}_{j=1}\sum^{n}_{k=q+1}\Big\langle(\nabla_{\overline{\xi}}^{B}{\bf J})
\frac{\partial}{\partial \overline{\xi}_{j}}, \frac{\partial}{\partial \overline{\xi}_{k}}\Big\rangle
d\overline{\xi}_{k}\wedge i_{\frac{\partial}{\partial \xi_{j}}}\Big]
P^{N}\bigg\}(0,0)I_{\textup{det}(\overline{W}^{\ast})\otimes E}
\nonumber \\&
-8\pi\bigg\{
(\mathcal{L}^{0}_{2})^{-1}\mathcal{P}^{N^{\bot}}
\Big[\big\langle (\nabla_{\mathcal{R}}^{B}{\bf J})\frac{\partial}{\partial z_{i}},
\frac{\partial}{\partial \overline{z}_{l}}\Big\rangle d\overline{z}_{l}\wedge
i_{\frac{\partial}
{\partial \overline{z}_{i}}}
\nonumber \\&
\times \sum^{q}_{j=1}\sum^{n}_{k=q+1}\Big\langle(\nabla_{\xi}^{B}{\bf J})
\frac{\partial}{\partial \overline{\xi}_{j}}, \frac{\partial}{\partial \overline{\xi}_{k}}\Big\rangle
d\overline{\xi}_{k}\wedge i_{\frac{\partial}{\partial \xi_{j}}}\Big]
P^{N}\bigg\}(0,0)I_{\textup{det}(\overline{W}^{\ast})\otimes E}
 \\=&
-\frac{4}{3\pi}\sum_{j=1}^{q}\sum_{k=q+1}^{n}
\Big|\big\langle \big(\nabla_{\frac{\partial}{\partial \xi_{m}}}^{B}{\bf J}\big)
\frac{\partial}{\partial \xi_{j}}, \frac{\partial}{\partial \xi_{k}}\big\rangle\Big|^{2}
I_{\textup{det}({\overline W}^{\ast})\otimes E}+I_{1}
\nonumber \\&
-\frac{2}{\pi}\sum_{j=1}^{q}\sum_{k=q+1}^{n}
\Big|\big\langle \big(\nabla_{\frac{\partial}{\partial \overline{\xi}_{m}}}^{B}{\bf J}\big)
\frac{\partial}{\partial \xi_{j}}, \frac{\partial}{\partial \xi_{k}}\big\rangle\Big|^{2}
I_{\textup{det}({\overline W}^{\ast})\otimes E}+I_{2}
\nonumber \\=&
-\frac{1}{24\pi}\Big(\big|\nabla^{B}{\bf J}\big|^{2}+
4\big|\big\langle S^{B}(\overline{u}_{i})u_{j}, u_{k}\big\rangle \big|^{2}\Big)
I_{\textup{det}({\overline W}^{\ast})\otimes E}+I_{1}+I_{2},
\nonumber
\end{align}
with
\begin{align}\begin{split}
I_{1}=-\frac{1}{15\pi}\sum^{q}_{i,j=1}\sum^{n}_{k,l=q+1}
&\Big\langle(\nabla_{\frac{\partial}{\partial \xi_{m}}}^{B}{\bf J})
\frac{\partial}{\partial \overline{\xi}_{i}}, \frac{\partial}{\partial \overline{\xi}_{l}}\Big\rangle
\Big\langle(\nabla_{\frac{\partial}{\partial \overline{\xi}_{m}}}^{B}{\bf J})
\frac{\partial}{\partial \overline{\xi}_{j}}, \frac{\partial}{\partial \overline{\xi}_{k}}\Big\rangle
\\
\  & \times d\overline{\xi}_{k}\wedge d\overline{\xi}_{l} \wedge i_{\frac{\partial}{\partial \xi_{i}}}
i_{\frac{\partial}{\partial \xi_{j}}}I_{\textup{det} (\overline{W}^{\ast})\otimes E}
\end{split}\end{align}
and
\begin{align}\begin{split}
I_{2}=-\frac{1}{10\pi}\sum^{q}_{i,j=1}\sum^{n}_{k,l=q+1}
&\Big\langle(\nabla_{\frac{\partial}{\partial \overline{\xi}_{m}}}^{B}{\bf J})
\frac{\partial}{\partial \overline{\xi}_{i}}, \frac{\partial}{\partial \overline{\xi}_{l}}\Big\rangle
\Big\langle(\nabla_{\frac{\partial}{\partial \xi_{m}}}^{B}{\bf J})
\frac{\partial}{\partial \overline{\xi}_{j}}, \frac{\partial}{\partial \overline{\xi}_{k}}\Big\rangle
\\
\  & \times d\overline{\xi}_{k}\wedge d\overline{\xi}_{l}\wedge i_{\frac{\partial}{\partial \xi_{i}}}
i_{\frac{\partial}{\partial \xi_{j}}}I_{\textup{det} (\overline{W}^{\ast})\otimes E}.
\end{split}\end{align}

\subsection{The terms in ${\bf b}_{1}$ containing the factor $\mathcal{O}_{2}$}
Before calculating the term \\ $\big((\mathcal{L}^{0}_{2})^{-1}\mathcal{P}^{N^{\bot}}\mathcal{O}_{2}\mathcal{P}^{N}\big)(0,0)$
in (\ref{2.31b}), we first need to derive a formula for
\\ $\big({\mathcal L}_{0}^{-1}{P}^{N^{\bot}}\mathcal{O}'_{2}P^{N}\big)(0,0)$.

\begin{lemma}\label{t2.7}
\begin{align}\label{2.63}
-\Big(\mathcal{L}^{-1}_{0}P^{N^{\bot}}\mathcal{O}'_{2}P^{N}\Big)(0,0)=
\frac{1}{2\pi} \Big[R^{E}(\frac{\partial}{\partial \xi_{i}}, \frac{\partial}{\partial\overline{\xi}_{i}})
+\Big\langle R^{TX}(\frac{\partial}{\partial \xi_{i}}, \frac{\partial}{\partial\overline{\xi}_{j}})
\frac{\partial}{\partial \xi_{j}}, \frac{\partial}{\partial\overline{\xi}_{i}}\Big\rangle\Big].
\end{align}
\end{lemma}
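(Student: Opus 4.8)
The plan is to substitute the explicit expression (\ref{2.4}) for $\mathcal{O}'_{2}$ and to compute, summand by summand, the contribution to the value at $(0,0)$ of the kernel of $\mathcal{L}_{0}^{-1}P^{N^{\bot}}\mathcal{O}'_{2}P^{N}$, working throughout with the creation and annihilation operators $b_{j},b^{+}_{j}$ of (\ref{2.18}). The basic inputs are $b^{+}_{j}P^{N}=0$, the identity $\mathcal{L}_{0}=\sum_{j}b_{j}b^{+}_{j}$, the commutation relations (\ref{2.19}), Theorem \ref{t2.3}, and the evaluation formulas (\ref{2.56}) together with their obvious higher-degree analogues. Since $\mathcal{O}'_{2}$ is literally the operator of \cite[(2.5)]{Ma06} and \cite[(1.30)]{Ma08}, this is the same computation as in \cite{Ma06,Ma08}; the only difference is that here the ground state $P^{N}$ is the Gaussian in the ${\bf J}$-holomorphic coordinates $\xi$, so that the output naturally comes out expressed through $\partial/\partial\xi_{i}$ rather than $\partial/\partial z_{i}$.

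Concretely, one first rewrites each covariant derivative $\nabla_{0,e_{i}}$ occurring in (\ref{2.4}) in the frame adapted to $\xi$: by (\ref{2.18}) one has $\nabla_{0,\partial/\partial\xi_{j}}=-\frac{1}{2}b_{j}$ and $\nabla_{0,\partial/\partial\overline{\xi}_{j}}=\frac{1}{2}b^{+}_{j}$. Applying $\mathcal{O}'_{2}$ to $P^{N}$ and commuting the polynomial coefficients (contractions of $R^{TX}_{x_{0}}$, $R^{E}_{x_{0}}$ and the Taylor coefficients of $R^{L}$ against powers of $Z=\xi+\overline{\xi}$) to the left by (\ref{2.19}), the creation parts $b^{+}_{j}$ either annihilate $P^{N}$ or spawn lower-order terms, and what survives is a sum of expressions of the shape $(\text{polynomial})\cdot b_{i}b_{j}\,P^{N}$, $(\text{polynomial})\cdot b_{i}\,P^{N}$ and $(\text{polynomial})\cdot P^{N}$, to which (\ref{2.56}) and its analogues apply directly after taking $\mathcal{L}_{0}^{-1}P^{N^{\bot}}$. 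The commutator term $-\frac{1}{12}\big[\mathcal{L}_{0},\langle R^{TX}_{x_{0}}(\mathcal{R},e_{i})\mathcal{R},e_{i}\rangle\big]$ is treated separately using $\mathcal{L}_{0}^{-1}P^{N^{\bot}}[\mathcal{L}_{0},g]P^{N}=P^{N^{\bot}}gP^{N}$ (valid because $\mathcal{L}_{0}P^{N}=0$), and then evaluated at $(0,0)$ by the degree-two case of (\ref{2.56}).

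I expect two simplifications and one genuine obstacle. The simplification is that the summands of (\ref{2.4}) built purely from $R^{L}$ and its derivatives --- the two involving $\sum_{|\alpha|=2}(\partial^{\alpha}R^{L})_{x_{0}}\frac{Z^{\alpha}}{\alpha!}$ and the square $-\frac{1}{9}\sum_{i}\big[\sum_{j}(\partial_{j}R^{L})_{x_{0}}(\mathcal{R},e_{i})Z_{j}\big]^{2}$ --- make no net contribution to $\big(\mathcal{L}_{0}^{-1}P^{N^{\bot}}\mathcal{O}'_{2}P^{N}\big)(0,0)$, the same cancellation as in the positive case of \cite[\S\,2]{Ma06}; and the $R^{E}$ term comes off immediately from the single summand $-(R^{E}_{x_{0}})(\mathcal{R},e_{i})\nabla_{0,e_{i}}$, producing via (\ref{2.56}) exactly $\frac{1}{2\pi}R^{E}(\partial/\partial\xi_{i},\partial/\partial\overline{\xi}_{i})$ with no further cancellation. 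The hard part is the bookkeeping for the Levi-Civita curvature: one must combine the genuine second-order term $\frac{1}{3}\langle R^{TX}_{x_{0}}(\mathcal{R},e_{i})\mathcal{R},e_{j}\rangle\nabla_{0,e_{i}}\nabla_{0,e_{j}}$, the first-order term $\frac{2}{3}\langle R^{TX}_{x_{0}}(\mathcal{R},e_{j})e_{j},e_{i}\rangle\nabla_{0,e_{i}}$, and the commutator term above, along with all the lower-order pieces that (\ref{2.19}) spawns from them; re-expressing the frame sums in the basis $\partial/\partial\xi_{i},\partial/\partial\overline{\xi}_{i}$ of $T^{(1,0)}_{\bf J}X\oplus T^{(0,1)}_{\bf J}X$ and invoking the first Bianchi identity, the skew-symmetries of $R^{TX}$, the pair symmetry $\langle R^{TX}(U,V)W,Y\rangle=\langle R^{TX}(W,Y)U,V\rangle$, and the reality of $R^{TX}$ (which lets conjugate terms be paired), one should see everything collapse onto the single contraction $\langle R^{TX}(\partial/\partial\xi_{i},\partial/\partial\overline{\xi}_{j})\partial/\partial\xi_{j},\partial/\partial\overline{\xi}_{i}\rangle$ with overall constant $\frac{1}{2\pi}$, which is (\ref{2.63}).
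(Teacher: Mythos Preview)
Your proposal is correct and follows essentially the same approach as the paper: the paper does not give a detailed proof either, but simply notes that (\ref{2.63}) is the analogue of \cite[(2.39)]{Ma08} with the coordinates $(\xi_{1},\ldots,\xi_{n})$ adapted to ${\bf J}$ replacing the coordinates $(z_{1},\ldots,z_{n})$ adapted to $J$, and refers the reader there for the computation. Your outline reproduces exactly that computation (creation/annihilation operators, (\ref{2.56}), cancellation of the $R^{L}$ terms, Bianchi and symmetry arguments for the $R^{TX}$ contraction), so there is nothing to add.
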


\noindent
This result is the analogue of \cite[(2.39)]{Ma08}). In our case we use the coordinates
$(\xi_{1}, \ldots, \xi_{n})$ adapted to ${\bf J}$ (see (\ref{2.13})), whereas in \cite[(2.39)]{Ma08}) the
coordinates $(z_{1}, \ldots, z_{n})$ adapted to $J$ are used (see \cite[\S 1.4]{Ma08}). The proof of
(\ref{2.63}) is similar to \cite[(2.39)]{Ma08} so we omit it.

Now we are ready to compute the term $\Big((\mathcal{L}^{0}_{2})^{-1}\mathcal{P}^{N^{\bot}}\Psi \mathcal{P}^{N}\Big)(0,0)$.
Note first that $\Psi=\frac{1}{4}(d T_{as})\in \Lambda^{2}(T^{\ast(1,0)}X)\otimes \Lambda^{2}(T^{\ast(0,1)}X)$.
By (\ref{1.16a}) we obtain
\begin{align}
24\Psi=&\frac{1}{4}\big(dT_{as}\big)(e_{i}, e_{j}, e_{k}, e_{k})c(e_{i})c(e_{j})c(e_{k})c(e_{l})
\nonumber \\=& -\frac{1}{2}\big(dT_{as}\big)(e_{i}, e_{j}, v_{k}, \overline{v}_{k})c(e_{i})c(e_{j})
+\big(dT_{as}\big)(e_{i}, e_{j}, v_{k}, \overline{v}_{l})c(e_{i})c(e_{j})\overline{v}^{l}\wedge i_{\overline{v}_{k}}
\nonumber \\& \ \
+\frac{1}{2}\big(dT_{as}\big)(e_{i}, e_{j}, v_{k}, v_{l})c(e_{i})c(e_{j})i_{\overline{v}_{k}}i_{\overline{v}_{l}}
+\frac{1}{2}\big(dT_{as}\big)(e_{i}, e_{j}, \overline{v}_{k}, \overline{v}_{l})c(e_{i})c(e_{j})
\overline{v}^{k}\wedge \overline{v}^{l}
\nonumber \\=&-2\Big[-\frac{1}{2}\big(dT_{as}\big)(v_{j}, \overline{v}_{j}, v_{k}, \overline{v}_{k})
+\big(dT_{as}\big)(v_{i}, \overline{v}_{j}, v_{k}, \overline{v}_{k})\overline{v}^{j}\wedge i_{\overline{v}_{i}}\Big]
\nonumber \\ &
+4\Big[-\frac{1}{2}\big(dT_{as}\big)(v_{j}, \overline{v}_{j}, v_{k}, \overline{v}_{l})
+\big(dT_{as}\big)(v_{i}, \overline{v}_{j}, v_{k}, \overline{v}_{l})\overline{v}^{j}
\wedge i_{\overline{v}_{i}}\Big]\overline{v}^{l}\wedge i_{\overline{v}_{k}}
\nonumber \\ &
+\big(dT_{as}\big)(\overline{v}_{i}, \overline{v}_{j}, v_{k}, v_{l})\overline{v}^{i}\wedge
\overline{v}^{j}\wedge i_{\overline{v}_{k}}i_{\overline{v}_{l}}
+\big(dT_{as}\big)(v_{i}, v_{j}, \overline{v}_{k}, \overline{v}_{l})i_{\overline{v}_{i}}i_{\overline{v}_{j}}
\overline{v}^{k}\wedge\overline{v}^{l}
\nonumber \\=&\big(dT_{as}\big)(v_{j}, \overline{v}_{j}, v_{k}, \overline{v}_{k})
-4\big(dT_{as}\big)(v_{i}, \overline{v}_{j}, v_{k}, \overline{v}_{k})
\overline{v}^{j}\wedge i_{\overline{v}_{i}}
\nonumber \\ &
+4\big(dT_{as}\big)(v_{i}, \overline{v}_{j}, v_{k}, \overline{v}_{l})\overline{v}^{j}
\wedge i_{\overline{v}_{i}}\overline{v}^{l}\wedge i_{\overline{v}_{k}}
\nonumber \\ &
+\big(dT_{as}\big)(\overline{v}_{i}, \overline{v}_{j}, v_{k}, v_{l})\overline{v}^{i}\wedge
\overline{v}^{j}\wedge i_{\overline{v}_{k}}i_{\overline{v}_{l}}
+\big(dT_{as}\big)(v_{i}, v_{j}, \overline{v}_{k}, \overline{v}_{l})i_{\overline{v}_{i}}i_{\overline{v}_{j}}
\overline{v}^{k}\wedge\overline{v}^{l}
\nonumber.
\end{align}
Using the relation
\begin{align}
\overline{v}^{i}\wedge i_{\overline{v}_{j}}+i_{\overline{v}_{j}}\overline{v}^{i}=\delta_{ij},
\end{align}
we get
\begin{align}\begin{split}
24\Psi=&3\big(dT_{as}\big)(v_{i}, \overline{v}_{i}, v_{j}, \overline{v}_{j})
-12\big(dT_{as}\big)(v_{i}, \overline{v}_{j}, v_{k}, \overline{v}_{k})
\overline{v}^{j}\wedge i_{\overline{v}_{i}}
\\&+
6\big(dT_{as}\big)(v_{i}, v_{j}, \overline{v}_{k}, \overline{v}_{l})
\overline{v}^{k}\wedge
\overline{v}^{l}\wedge i_{\overline{v}_{i}}i_{\overline{v}_{j}}.
\end{split}\end{align}
That is
\begin{align}\begin{split}
\Psi=&\frac{1}{2}\big(dT_{as}\big)(\frac{\partial}{\partial z_{i}},
\frac{\partial}{\partial \overline{z}_{i}}, \frac{\partial}{\partial z_{j}},
\frac{\partial}{\partial \overline{z}_{j}})
-2\big(dT_{as}\big)(\frac{\partial}{\partial z_{i}}, \frac{\partial}{\partial \overline{z}_{j}},
\frac{\partial}{\partial z_{k}}, \frac{\partial}{\partial \overline{z}_{k}})
d\overline{z}_{j}\wedge i_{\frac{\partial}{\partial \overline{z}_{i}}}
\\&+
\big(dT_{as}\big)(\frac{\partial}{\partial z_{i}}, \frac{\partial}{\partial z_{j}},
\frac{\partial}{\partial \overline{z}_{k}}, \frac{\partial}{\partial \overline{z}_{l}})
d\overline{z}_{k}\wedge
d\overline{z}_{l}\wedge i_{\frac{\partial}{\partial \overline{z}_{i}}}
i_{\frac{\partial}{\partial \overline{z}_{j}}}.
\end{split}\end{align}

Hence,
\begin{align}
\Psi I_{\textup{det}(\overline{W}^{\ast})\otimes E}
=&\frac{1}{2}\big(dT_{as}\big)(\frac{\partial}{\partial z_{i}},
\frac{\partial}{\partial \overline{z}_{i}}, \frac{\partial}{\partial z_{j}},
\frac{\partial}{\partial \overline{z}_{j}})I_{\textup{det}(\overline{W}^{\ast})\otimes E}
\nonumber \\&
-2\sum_{i=1}^{q}\big(dT_{as}\big)(\frac{\partial}{\partial z_{i}},
\frac{\partial}{\partial \overline{z}_{i}}, \frac{\partial}{\partial z_{k}},
\frac{\partial}{\partial \overline{z}_{k}})I_{\textup{det}(\overline{W}^{\ast})\otimes E}
\nonumber \\&
-2\sum_{i=1}^{q}\sum_{j=q+1}^{n}\big(dT_{as}\big)(\frac{\partial}{\partial z_{i}},
\frac{\partial}{\partial \overline{z}_{j}}, \frac{\partial}{\partial z_{k}},
\frac{\partial}{\partial \overline{z}_{k}})d\overline{z}_{j}
\wedge i_{\frac{\partial}{\partial \overline{z}_{i}}}
I_{\textup{det}(\overline{W}^{\ast})\otimes E}
\nonumber \\&
+2\sum_{i, j=1}^{q}\big(dT_{as}\big)(\frac{\partial}{\partial z_{i}},
\frac{\partial}{\partial z_{j}}, \frac{\partial}{\partial \overline{z}_{j}},
\frac{\partial}{\partial \overline{z}_{i}})
I_{\textup{det}(\overline{W}^{\ast})\otimes E}
\nonumber \\&
+4\sum_{i, j=1}^{q}\sum_{k=q+1}^{n}\big(dT_{as}\big)(\frac{\partial}{\partial z_{i}},
\frac{\partial}{\partial z_{j}}, \frac{\partial}{\partial \overline{z}_{k}},
\frac{\partial}{\partial \overline{z}_{i}})d\overline{z}_{k}
\wedge i_{\frac{\partial}{\partial \overline{\xi}_{j}}}
I_{\textup{det}(\overline{W}^{\ast})\otimes E}
\nonumber \\&
+\sum_{i, j=1}^{q}\sum_{k, l=q+1}^{n}\big(dT_{as}\big)(\frac{\partial}{\partial z_{i}},
\frac{\partial}{\partial z_{j}}, \frac{\partial}{\partial \overline{z}_{k}},
\frac{\partial}{\partial \overline{z}_{l}})d\overline{z}_{k}
\wedge d\overline{z}_{l}\wedge i_{\frac{\partial}{\partial \overline{\xi}_{i}}}
i_{\frac{\partial}{\partial \overline{\xi}_{j}}}
I_{\textup{det}(\overline{W}^{\ast})\otimes E}
\nonumber \\=&
\frac{1}{2}\big(dT_{as}\big)(\frac{\partial}{\partial \xi_{i}},
\frac{\partial}{\partial \overline{\xi}_{i}}, \frac{\partial}{\partial \xi_{j}},
\frac{\partial}{\partial \overline{\xi}_{j}})I_{\textup{det}(\overline{W}^{\ast})\otimes E}
\nonumber \\&
-2\sum_{i=1}^{q}\sum_{j=q+1}^{n}\big(dT_{as}\big)(\frac{\partial}{\partial \overline{\xi}_{i}},
\frac{\partial}{\partial \overline{\xi}_{j}}, \frac{\partial}{\partial \xi_{k}},
\frac{\partial}{\partial \overline{\xi}_{k}})d\overline{\xi}_{j}
\wedge i_{\frac{\partial}{\partial \xi_{i}}}
I_{\textup{det}(\overline{W}^{\ast})\otimes E}
\nonumber \\&
+\sum_{i, j=1}^{q}\sum_{k, l=q+1}^{n}\big(dT_{as}\big)(\frac{\partial}{\partial \overline{\xi}_{i}},
\frac{\partial}{\partial \overline{\xi}_{j}}, \frac{\partial}{\partial \overline{\xi}_{k}},
\frac{\partial}{\partial \overline{\xi}_{l}})d\overline{\xi}_{k}
\wedge d\overline{\xi}_{l}\wedge i_{\frac{\partial}{\partial \xi_{i}}}
i_{\frac{\partial}{\partial \xi_{j}}}
I_{\textup{det}(\overline{W}^{\ast})\otimes E}.
\end{align}

Now it follows immediately that
\begin{align}\begin{split}\label{2.80}
&\Big((\mathcal{L}^{0}_{2})^{-1}\mathcal{P}^{N^{\bot}}\Psi \mathcal{P}^{N}\Big)(0,0)
\\=&
\frac{1}{16\pi}\Big[(dT_{as})(\frac{\partial}{\partial \overline{\xi}_{i}},
\frac{\partial}{\partial \overline{\xi}_{j}}, \frac{\partial}{\partial \overline{\xi}_{k}},
\frac{\partial}{\partial \overline{\xi}_{l}})d\overline{\xi}_{k}\wedge d\overline{\xi}_{l}\wedge i_{\frac{\partial}{\partial\xi_{i}}}i_{\frac{\partial}{\partial\xi_{j}}}
\\&-4\sum_{j=1}^{q}\sum_{k=q+1}^{n}(dT_{as})(\frac{\partial}{\partial \xi_{i}},
\frac{\partial}{\partial \overline{\xi}_{i}}, \frac{\partial}{\partial \overline{\xi}_{j}},
\frac{\partial}{\partial \overline{\xi}_{k}})
d\overline{\xi}_{k}\wedge i_{\frac{\partial}{\partial\xi_{j}}}
\Big] I_{\textup{det}(\overline{W}^{\ast})\otimes E}.
\end{split}\end{align}

By (\ref{2.6}), we get
\begin{align}\label{2.96}
\mathcal{O}_{2}=&\mathcal{O}'_{2}+R_{x_{0}}^{B, \Lambda^{0, \bullet}}(\mathcal{R}, \frac{\partial}{\partial \overline{\xi}_{j}})b_{j}
-R_{x_{0}}^{B, \Lambda^{0, \bullet}}(\mathcal{R}, \frac{\partial}{\partial \xi_{j}})b^{+}_{j}
\nonumber \\&
-\frac{\sqrt{-1}}{2}\pi \Big\langle \big(\nabla^{B}\nabla^{B}{\bf J}\big)_{(\mathcal{R},\mathcal{R})}
e_{l},e_{m}\Big\rangle c(e_{l})c(e_{m})
\\&
+\frac{1}{2}
\Big(R^{E}_{x_{0}}+\frac{1}{2}\textup{Tr}\big[R^{T^{(1,0)}X}\big]\Big)(e_{l},e_{m})
c(e_{l})c(e_{m})+\frac{1}{4}r^{X}_{x_{0}}-\Psi.
\nonumber \end{align}

From (\ref{1.16b}), (\ref{7.3}), (\ref{2.44b}) and Proposition \ref{t3.1}, we obtain
\begin{align}\label{2.97}
&\mathcal{P}^{N^{\bot}}\big(\mathcal{O}_{2}-\mathcal{O}'_{2}+\Psi\big)\mathcal{P}^{N}
\nonumber \\=&
\mathcal{P}^{N^{\bot}}\Bigg\{\frac{1}{2}\textup{Tr}[R^{T^{(1,0)}X}]\big(\mathcal{R},\frac{\partial}
{\partial \overline{\xi}_{i}}\big)b_{i}
\nonumber \\&
-\bigg\langle\Big(R^{B}(\mathcal{R},\frac{\partial}
{\partial \overline{\xi}_{i}})b_{i}-2\pi\sqrt{-1}(\nabla^{B}\nabla^{B}
{\bf J})_{(\mathcal{R}, \mathcal{R})}\Big)\frac{\partial}
{\partial \xi_{j}}, \frac{\partial}
{\partial \overline{\xi}_{j}} \bigg\rangle
 \\&
+2\sum_{j=1}^{q}\sum_{k=q+1}^{n}\bigg[\Big\langle
\Big(2R^{B}(\mathcal{R}, \frac{\partial}
{\partial \overline{\xi}_{i}})b_{i}-2\sqrt{-1}\pi
(\nabla^{B}\nabla^{B}{\bf J})_{(\mathcal{R},\mathcal{R})}\Big)\frac{\partial}
{\partial \overline{\xi}_{j}}, \frac{\partial}
{\partial \overline{\xi}_{k}}\Big\rangle
\nonumber \\  &
+2\Big(R^{E}+\frac{1}{2}\textup{Tr}[R^{T^{(1,0)}X}]\Big)
\big(\frac{\partial}
{\partial \overline{\xi}_{j}}, \frac{\partial}
{\partial \overline{\xi}_{k}}\big)\bigg]d\overline{\xi}_{k}\wedge
i_{\frac{\partial}{\partial \xi_{j}}}\Bigg\}\mathcal{P}^{N}.
\nonumber
\end{align}

Using (\ref{1.15}), (\ref{2.24}), (\ref{2.56})$-$(\ref{2.59}) and Theorem \ref{t2.3}, we have
\begin{align} \label{2.98}
&-\Big((\mathcal{L}^{0}_{2})^{-1}\mathcal{P}^{N^{\bot}}\big(\mathcal{O}_{2}
-\mathcal{O}'_{2}+\Psi\big)\mathcal{P}^{N}\Big)(0,0)
\\=&
\Bigg\{\frac{1}{4\pi}\textup{Tr}[R^{T^{(1,0)}X}]\big(\frac{\partial}
{\partial \xi_{i}},\frac{\partial}
{\partial \overline{\xi}_{i}}\big)-\frac{1}{2\pi}\Big\langle R^{B}
(\frac{\partial} {\partial \xi_{i}},\frac{\partial}
{\partial \overline{\xi}_{i}})\frac{\partial}
{\partial \xi_{j}}, \frac{\partial}{\partial\overline{\xi}_{j}}\Big\rangle
\nonumber \\&
+\frac{\sqrt{-1}}{2\pi}\Big\langle \Big(2(\nabla^{B}\nabla^{B}
{\bf J})_{(\frac{\partial}{\partial \xi_{i}}, \frac{\partial}
{\partial \overline{\xi}_{i}})}-\big[R^{B}(\frac{\partial}{\partial \xi_{i}}, \frac{\partial}
{\partial \overline{\xi}_{i}}), {\bf J}\big]\Big)\frac{\partial}
{\partial \xi_{j}}, \frac{\partial}{\partial \overline{\xi}_{j}} \Big\rangle
\nonumber \\&
-\sum_{j=1}^{q}\sum_{k=q+1}^{n}\bigg[ \frac{1}{6\pi}\Big\langle R^{B}
(\frac{\partial}
{\partial \xi_{i}}, \frac{\partial}
{\partial \overline{\xi}_{i}})\frac{\partial}
{\partial \overline{\xi}_{j}}, \frac{\partial}{\partial\overline{\xi}_{k}}\Big\rangle+\frac{1}{2\pi}
\Big(R^{E}+\frac{1}{2}\textup{Tr}[R^{T^{(1,0)}X}]\Big)
\big(\frac{\partial}{\partial \overline{\xi}_{j}}, \frac{\partial}
{\partial \overline{\xi}_{k}}\big)
\nonumber \\&
-\frac{\sqrt{-1}}{6\pi}
\Big\langle \Big(2\big(\nabla^{B}\nabla^{B}{\bf J}\big)_{(\frac{\partial}
{\partial \xi_{i}}, \frac{\partial}
{\partial \overline{\xi}_{i}})}-\big[R^{B}(\frac{\partial}{\partial \xi_{i}}, \frac{\partial}
{\partial \overline{\xi}_{i}}), {\bf J}\big]\Big)\frac{\partial}
{\partial \overline{\xi}_{j}}, \frac{\partial}
{\partial \overline{\xi}_{k}}\Big\rangle \bigg]d\overline{\xi}_{k}\wedge
i_{\frac{\partial}{\partial \xi_{j}}}
\Bigg\}I_{\textup{det}{\overline W}^{\ast}\otimes E}.\nonumber
\end{align}

From (\ref{2.98}), we get
\begin{align}\label{2.99}
&-\Big((\mathcal{L}^{0}_{2})^{-1}\mathcal{P}^{N^{\bot}}\big(\mathcal{O}_{2}
-\mathcal{O}'_{2}+\Psi\big)\mathcal{P}^{N}\Big)(0,0)
\\=&
\Bigg\{\frac{1}{4\pi}\textup{Tr}[R^{T^{(1,0)}X}]\big(\frac{\partial}
{\partial \xi_{i}},\frac{\partial}
{\partial \overline{\xi}_{i}}\big)
-\frac{1}{2\pi}\sum_{j=1}^{n}\bigg\langle R^{B}
(\frac{\partial} {\partial \xi_{i}},\frac{\partial}
{\partial \overline{\xi}_{i}})\frac{\partial}
{\partial \xi_{j}}, \frac{\partial} {\partial \overline{\xi}_{j}}\Big\rangle
\nonumber \\&
+\frac{\sqrt{-1}}{\pi}\sum_{j=1}^{n}\Big\langle\big(\nabla^{B}\nabla^{B}
{\bf J}\big)_{(\frac{\partial}{\partial \xi_{i}}, \frac{\partial}
{\partial \overline{\xi}_{i}})}\frac{\partial}
{\partial \xi_{j}}, \frac{\partial}{\partial \overline{\xi}_{j}} \bigg\rangle
\nonumber \\&
-\frac{1}{2\pi}\sum_{j=1}^{q}\sum_{k=q+1}^{n}\bigg[\Big\langle R^{B}(\frac{\partial}
{\partial \xi_{i}}, \frac{\partial}
{\partial \overline{\xi}_{i}})\frac{\partial}
{\partial \overline{\xi}_{j}}, \frac{\partial}
{\partial \overline{\xi}_{k}}\Big\rangle
\nonumber \\& \ \ \ \  \ \ \ \ \ \ \ \ \ \ \ \ \ \ \ \ \
-\frac{2\sqrt{-1}}{3}\Big\langle \big(\nabla^{B}\nabla^{B}{\bf J}\big)_{
(\frac{\partial}{\partial \xi_{i}}, \frac{\partial}{\partial \overline{\xi}_{i}})}\frac{\partial}{\partial \overline{\xi}_{j}},
\frac{\partial}{\partial \overline{\xi}_{k}} \Big\rangle
\nonumber \\& \ \ \ \  \ \ \ \ \ \ \ \ \ \ \ \ \ \ \ \ \
+\Big(R^{E}+\frac{1}{2}\textup{Tr}\big[R^{T^{(1,0)}X}\big]\Big)
\big(\frac{\partial}{\partial \overline{\xi}_{j}}, \frac{\partial}
{\partial \overline{\xi}_{k}}\big)\bigg]d\overline{\xi}_{k}\wedge
i_{\frac{\partial}{\partial \xi_{j}}}
\Bigg\}I_{\textup{det}{\overline W}^{\ast}\otimes E}.\nonumber
\end{align}
By (\ref{1.15a}) we obtain
\begin{align}\label{1.15b}
\sqrt{-1}\sum_{j=1}^{n}\Big\langle\big(\nabla^{B}\nabla^{B}
{\bf J}\big)_{(\frac{\partial}{\partial \xi_{i}}, \frac{\partial}
{\partial \overline{\xi}_{i}})}\frac{\partial}
{\partial \xi_{j}}, \frac{\partial}{\partial \overline{\xi}_{j}} \bigg\rangle
=\frac{1}{16}\big|\nabla^{B}{\bf J}\big|^{2}.
\end{align}
Substituting (\ref{1.15b}) into (\ref{2.99}) yields
\begin{align}\label{2.100a}
&-\Big((\mathcal{L}^{0}_{2})^{-1}\mathcal{P}^{N^{\bot}}\big(\mathcal{O}_{2}
-\mathcal{O}'_{2}+\Psi\big)\mathcal{P}^{N}\Big)(0,0)
 \\=&
\bigg[\frac{1}{16\pi}\big|\nabla^{B}{\bf J}\big|^{2}
-\frac{1}{2\pi}\Big\langle
R^{B}(\frac{\partial}
{\partial \xi_{i}}, \frac{\partial}
{\partial  \overline{\xi}_{i}})\frac{\partial}
{\partial \xi_{j}}, \frac{\partial}
{\partial \overline{\xi}_{j}}\Big\rangle
\nonumber \\&
+\frac{1}{4\pi}\textup{Tr}
\big[R^{T^{(1,0)}X}\big](\frac{\partial}{\partial \xi_{i}},
\frac{\partial}{\partial\overline{\xi}_{i}})\bigg]I_{\textup{det}{\overline W}^{\ast}\otimes E}
\nonumber \\&-
\frac{1}{2\pi}\sum_{j=1}^{q}\sum_{k=q+1}^{n}\bigg[\Big\langle R^{B}(\frac{\partial}
{\partial \xi_{i}}, \frac{\partial}
{\partial \overline{\xi}_{i}})\frac{\partial}
{\partial \overline{\xi}_{j}}, \frac{\partial}
{\partial \overline{\xi}_{k}}\Big\rangle
\nonumber \\& \ \ \ \  \ \ \ \ \ \ \ \ \ \
-\frac{2\sqrt{-1}}{3}\Big\langle \big(\nabla^{B}\nabla^{B}{\bf J}\big)_{
(\frac{\partial}{\partial \xi_{i}}, \frac{\partial}{\partial \overline{\xi}_{i}})}\frac{\partial}{\partial \overline{\xi}_{j}},
\frac{\partial}{\partial \overline{\xi}_{k}} \Big\rangle
\nonumber \\& \ \ \ \ \ \ \ \ \ \ \ \  \ \
+\Big(R^{E}+\frac{1}{2}\textup{Tr}\big[R^{T^{(1,0)}X}\big]\Big)
\big(\frac{\partial}{\partial \overline{\xi}_{j}}, \frac{\partial}
{\partial \overline{\xi}_{k}}\big)\bigg]d\overline{\xi}_{k}\wedge
i_{\frac{\partial}{\partial \xi_{j}}}I_{\textup{det}(\overline{W}^{\ast})\otimes E}.
\nonumber \end{align}

\noindent
Similar to \cite[(2.21)]{Ma08} we obtain
\begin{align}\label{2.44}
\Big\langle R^{TX}(\frac{\partial}{\partial\xi_{i}},\frac{\partial}{\partial\xi_{j}})
\frac{\partial}{\partial\overline{\xi}_{i}}, \frac{\partial}{\partial\overline{\xi}_{j}}
\Big\rangle=
\frac{1}{32}\big|\nabla^{X}{\bf J}\big|^{2}.
\end{align}
\noindent The difference from \cite[(2.21)]{Ma08} is again the use of coordinates $(\xi_{1}, \ldots, \xi_{n})$
adapted to ${\bf J}$.  Combining (\ref{2.63}), (\ref{2.80}), (\ref{2.44}) and (\ref{2.100a})
we get
\begin{align}\label{2.100b}
&-\Big((\mathcal{L}^{0}_{2})^{-1}\mathcal{P}^{N^{\bot}}\mathcal{O}_{2}
\mathcal{P}^{N}\Big)(0,0)
 \\=&
\bigg[\frac{1}{16\pi}\big|\nabla^{B}{\bf J}\big|^{2}-\frac{1}{64\pi}\big|\nabla^{X}{\bf J}\big|^{2}
+\frac{1}{2\pi}R^{E}(\frac{\partial}{\partial \xi_{i}},
\frac{\partial}{\partial\overline{\xi}_{i}})
\nonumber \\&
+\frac{1}{4\pi}\textup{Tr}
\big[R^{T^{(1,0)}X}\big](\frac{\partial}{\partial \xi_{i}},
\frac{\partial}{\partial\overline{\xi}_{i}})-\frac{1}{2\pi}\Big\langle\big(R^{B}-R^{TX}\big)
(\frac{\partial}{\partial \xi_{i}}, \frac{\partial}{\partial \overline{\xi}_{i}})\frac{\partial}{\partial \xi_{j}},
\frac{\partial}{\partial \overline{\xi}_{j}}\Big\rangle \bigg]
I_{\textup{det}{\overline W}^{\ast}\otimes E}
\nonumber \\&-
\sum_{j=1}^{q}\sum_{k=q+1}^{n}\bigg[\frac{1}{2\pi}\Big\langle R^{B}
(\frac{\partial}{\partial \xi_{i}}, \frac{\partial}{\partial \overline{\xi}_{i}})\frac{\partial}
{\partial \overline{\xi}_{j}}, \frac{\partial}
{\partial \overline{\xi}_{k}}\Big\rangle
+\frac{1}{4\pi} \big(dT_{as}\big)(\frac{\partial}{\partial \xi_{i}}, \frac{\partial}{\partial \overline{\xi}_{i}},
\frac{\partial}{\partial \overline{\xi}_{j}}, \frac{\partial}
{\partial \overline{\xi}_{k}})
\nonumber \\& \ \ \ \ \ \ \ \ \ \ \ \  \ \ \
-\frac{\sqrt{-1}}{3\pi}\Big\langle \big(\nabla^{B}\nabla^{B}{\bf J}\big)_{
(\frac{\partial}{\partial \xi_{i}}, \frac{\partial}{\partial \overline{\xi}_{i}})}\frac{\partial}{\partial \overline{\xi}_{j}},
\frac{\partial}{\partial \overline{\xi}_{k}} \Big\rangle
\nonumber \\& \ \ \ \ \ \ \ \ \ \ \ \  \ \ \
+\frac{1}{2\pi}
\Big(R^{E}+\frac{1}{2}\textup{Tr}\big[R^{T^{(1,0)}X}\big]\Big)
\big(\frac{\partial}{\partial \overline{\xi}_{j}}, \frac{\partial}
{\partial \overline{\xi}_{k}}\big)\bigg]d\overline{\xi}_{k}\wedge
i_{\frac{\partial}{\partial \xi_{j}}}I_{\textup{det}(\overline{W}^{\ast})\otimes E}
\nonumber \\&
+\frac{1}{16\pi}(dT_{as})(\frac{\partial}{\partial \overline{\xi}_{i}},
\frac{\partial}{\partial \overline{\xi}_{j}}, \frac{\partial}{\partial \overline{\xi}_{k}},
\frac{\partial}{\partial \overline{\xi}_{l}})d\overline{\xi}_{k}\wedge d\overline{\xi}_{l}\wedge i_{\frac{\partial}{\partial\xi_{i}}}i_{\frac{\partial}{\partial\xi_{j}}}I_{\textup{det}(\overline{W}^{\ast})\otimes E}.
\nonumber
\end{align}
By (\ref{3.15}) we obtain
\begin{align}\label{2.100c}
&-\Big((\mathcal{L}^{0}_{2})^{-1}\mathcal{P}^{N^{\bot}}\mathcal{O}_{2}
\mathcal{P}^{N}\Big)(0,0)
\nonumber \\=&
\bigg[\frac{1}{2\pi}R^{E}(\frac{\partial}{\partial \xi_{i}},
\frac{\partial}{\partial\overline{\xi}_{i}})+\frac{1}{4\pi}\textup{Tr}
\big[R^{T^{(1,0)}X}\big](\frac{\partial}{\partial \xi_{i}},
\frac{\partial}{\partial\overline{\xi}_{i}})-\frac{1}{32\pi}\Lambda_{\omega}\big(d(\Lambda_{\omega}T_{as})\big)
\\&
+\frac{3}{64\pi}\big|\nabla^{B}{\bf J}\big|^{2}
+\frac{2}{\pi}\Big|\big\langle S^{B}(\frac{\partial}{\partial \overline{\xi}_{i}})\frac{\partial}{\partial \xi_{j}},
\frac{\partial}{\partial \xi_{k}}\big\rangle \Big|^{2}\bigg]
I_{\textup{det}{\overline W}^{\ast}\otimes E}
\nonumber \\&-
\frac{1}{2\pi}\sum_{j=1}^{q}\sum_{k=q+1}^{n}\Big[P(\frac{\partial}{\partial \overline{\xi}_{j}},
\frac{\partial}{\partial \overline{\xi}_{k}})
+R^{E}(\frac{\partial}{\partial \overline{\xi}_{j}}, \frac{\partial}{\partial \overline{\xi}_{k}})
\nonumber \\& \ \ \ \ \ \ \ \ \ \ \ \ \ \ \ \ \ \
-\frac{2\sqrt{-1}}{3}\big\langle (\nabla^{B}\nabla^{B}{\bf J})_{(\frac{\partial}{\partial \xi_{i}}, \frac{\partial}{\partial \overline{\xi}_{i}})}\frac{\partial}{\partial \overline{\xi}_{j}}, \frac{\partial}{\partial \overline{\xi}_{k}} \big\rangle \Big]d\overline{\xi}_{k}\wedge
i_{\frac{\partial}{\partial \xi_{j}}}I_{\textup{det}(\overline{W}^{\ast})\otimes E}
\nonumber \\&
+\frac{1}{16\pi}(dT_{as})(\frac{\partial}{\partial \overline{\xi}_{i}},
\frac{\partial}{\partial \overline{\xi}_{j}}, \frac{\partial}{\partial \overline{\xi}_{k}},
\frac{\partial}{\partial \overline{\xi}_{l}})d\overline{\xi}_{k}\wedge d\overline{\xi}_{l}\wedge i_{\frac{\partial}{\partial\xi_{i}}}i_{\frac{\partial}{\partial\xi_{j}}}I_{\textup{det}(\overline{W}^{\ast})\otimes E}.
\nonumber
\end{align}
Now our main result (\ref{2.1}) follows immediately from (\ref{2.31b}), (\ref{2.31c}),
(\ref{2.53}), (\ref{2.54}), (\ref{2.60}) and (\ref{2.100c}).
The trace formula (\ref{2.1b}) follows from (\ref{2.1}) and the definition of projection
$I_{\textup{det}(\overline{W}^{\ast})\otimes E}$.
This completes the proof of Theorem \ref{t2.1}.

\begin{proof}[Proof of Corollary \ref{t1.2}]
Since $(X, g^{TX}, J)$ is K\"ahler, then the torsion $T$ vanishes, hence
\begin{align}\label{7.7}
T_{as}=0\ \textup{and}\ R^{B}=R^{TX}.
\end{align}
From (\ref{1.14c}), (\ref{1.16}) and (\ref{2.14}), we obtain
\begin{align}\label{7.5}
\Big\langle\big(\nabla^{X}\nabla^{X}{\bf J}\big)_{(\overline{u}_{i}, u_{i})}\overline{u}_{j}, \overline{u}_{k}\Big\rangle=0.
\end{align}
Combining (\ref{1.15}) and (\ref{7.5}) yields
\begin{align}\label{7.6}
\Big\langle\big(\nabla^{X}\nabla^{X}{\bf J}\big)_{(u_{i}, \overline{u}_{i})}\overline{u}_{j}, \overline{u}_{k}\Big\rangle=
-2\sqrt{-1}\Big\langle R^{TX}(u_{i}, \overline{u}_{i})\overline{u}_{j}, \overline{u}_{k}\Big\rangle.
\end{align}
Formula (\ref{1.12e}) follows from (\ref{2.1}), (\ref{7.7}) and (\ref{7.6}). The proof of Corollary \ref{t1.2} is complete.
\end{proof}

%%%%%%%%%%%%%%%%%%%%%%%%%%%%%%%%%%%%%%%%%%%
\section{Compatibility with Riemann-Roch-Hirzebruch formula} \label{s6}
%%%%%%%%%%%%%%%%%%%%%%%%%%%%%%%%%%%%%%%%%%%
In this section we check the compatibility of our final result (\ref{2.1}) with
Riemann-Roch-Hirzebruch formulas.

Let us start with the Riemann-Roch-Hirzebruch formula which arises from the
Riemann-Roch-Hirzebruch Theorem (cf. e.g. \cite[Th.\,1.4.6]{Ma07}).
Let $h^{0, q}_{p}$ be the dimension of $H^{0, q}(X, L^{p}$ $\otimes E)$, and let
 $\textup{rk}(E)$ be the rank of $E$. Combining (\ref{1.19}) and
 the Riemann-Roch-Hirzebruch Theorem, we find that
 \begin{align}\begin{split}\label{1.19a}
 (-1)^{q}h^{0, q}_{p}=&\int_{X}\textup{Td}\big(T^{(1, 0)}X\big)\textup{ch}(L^{p}\otimes E)
 \\=&\ \textup{rk}(E)\int_{X}\frac{c_{1}(L)^{n}}{n!}p^{n}+\int_{X}
 \Big(c_{1}(E)+\frac{\textup{rk}(E)}{2}c_{1}\big(T^{(1, 0)}X\big)\Big)\frac{c_{1}(L)^{n-1}}{(n-1)!}p^{n-1}
 \\ &+O(p^{n-2}),
 \end{split}\end{align}
 where $\textup{ch}(\cdot), c_{1}(\cdot), \textup{Td}(\cdot)$ are the Chern character, the first
 Chern class and the Todd class of the corresponding complex vector bundles, respectively.

 By integrating over $X$ the expansion (\ref{1.22}) for $k=1$, we have
\begin{align}\begin{split}\label{2.1e}
&\int_{X}\textup{Tr}\big[P_{p}^{0, q}(x, x)\big]dv_{X}(x)
\\=&
p^{n}\int_{X}\textup{Tr}\big[{\bf b}_{0}(x)\big]dv_{X}(x)
+p^{n-1}\int_{X}\textup{Tr}\big[{\bf b}_{1}(x)\big]dv_{X}(x)+O(p^{n-2}),
\end{split}\end{align}
where the trace is taken over $\Lambda^{q}(T^{\ast(0, 1)}X)\otimes E$. By (\ref{7.1}), we obtain
\begin{align}\label{2.1a}
dv_{X}=\Theta^{n}/n!=(-1)^{q} {\omega}^{n}/n!.
\end{align}
It follows from (\ref{3.2}) that the following identity holds for any smooth $2$-form $\alpha$,
\begin{align}\label{2.1c}
\alpha\wedge \omega^{n-1}/(n-1)!=-\sqrt{-1}\alpha(u_{j}, \overline{u}_{j})\cdot {\omega}^{n}/n!.
\end{align}
Applying (\ref{2.1c}) for $\alpha=d(\Lambda_{\omega}T_{as})$ and the Stokes' Theorem, we obtain
\begin{align}\label{2.1d}
\int_{X}\Lambda_{\omega}\big(d(\Lambda_{\omega}T_{as})\big)dv_{X}
=(-1)^{q}/(n-1)! \cdot \int_{X}d(\Lambda_{\omega}T_{as})\wedge {\omega}^{n-1}=0.
\end{align}
Substituting (\ref{2.1b}), (\ref{2.1a}), (\ref{2.1d}) and the equality (\ref{2.1c}) for $\alpha=c_{1}(E)$ and $c_{1}(T^{(1, 0)}X)$, respectively, into (\ref{2.1e}),  we obtain (\ref{1.19a}).
 Therefore, our final formula (\ref{2.1}) is compatible with (\ref{1.19a}).

On the other hand we also explain here the compatibility of our formula (\ref{2.1})
with the local index formula obtained by Bismut \cite[(2.53)]{Bismut89} for non K\"ahler manifolds under the
assumption that the form $T_{as}$ is closed.

Recall that $S^{B}$ is defined in (\ref{1.11a}). Set
\begin{align}\label{1.1.16}
\nabla^{-B}=\nabla^{TX}+S^{-B}\ \  \textup{with}\ S^{-B}=-S^{B}.
\end{align}
We denote by $R^{-B}$ the curvature of the connection $\nabla^{-B}$.
Note that by (\ref{1.11a}) and \cite[(2.36)]{Bismut89} our notations $S^{B}, R^{-B}$
correspond to $S^{-B}$ and $R^{B}$ in \cite[\S II\,b)]{Bismut89} respectively.
Let $\widehat{A}$ be the Hirzebruch polynomial on $(2n, 2n)$ matrices. Then
\begin{align}\label{1.1.12}
\widehat{A}\big(\frac{R^{-B}}{2\pi}\big)\in \oplus_{j}\ \Omega^{4j}(X),
\end{align}
where $\Omega^{j}(X)$ denotes the space of smooth $j$-forms over $X$.

For $t>0$, let $Q_{p, t}(x, y)$ be the smooth kernel on $X$ associated to the operator
$\textup{exp}(-tD^{2}_{p})$. Let $\Omega^{0,\textup{even}}(X, L^{p}\otimes E)$
(resp. $\Omega^{0,\textup{odd}}(X, L^{p}\otimes E)$) be the direct sum of
the space of smooth $(0, 2j)$-forms (resp. $(0, 2j+1)$-forms) over $X$ with values
in $L^{p}\otimes E$ for $j\geqslant 0$. Set
\begin{align}
\textup{Tr}_{s}=\textup{Tr}\big|_{\Omega^{0,\textup{even}}(X, L^{p}\otimes E)}
-\textup{Tr}\big|_{\Omega^{0,\textup{odd}}(X, L^{p}\otimes E)}.
\end{align}

\noindent
Note that the auxiliary vector bundle $\xi$ in \cite[Th.\,2.11]{Bismut89} should read as $L^{p}\otimes E$.
Denote by $R^{L^{p}\otimes E}$ the curvature of the Chern connection $\nabla^{L^{p}\otimes E}$
on $L^{p}\otimes E$. Then we can restate \cite[Th.\,2.11]{Bismut89} as follows.

\begin{thm}
Assume that $dT_{as}=0$, then
\begin{align}\begin{split}\label{1.1.13}
&\lim_{t\rightarrow 0}\textup{Tr}_{s}\big[Q_{p, t}(x, x)\big]dv_{X}(x)
\\=&
\bigg\{\widehat{A}\big(\frac{R^{-B}}{2\pi}\big)
\textup{exp}\Big(\frac{\sqrt{-1}}{4\pi}\textup{Tr}\big[R^{T^{(1,0)}X}\big]\Big)
\textup{Tr}\Big[\textup{exp}\big(\frac{\sqrt{-1}}{2\pi}R^{L^{p}\otimes E}\big)\Big]\bigg\}^{\textup{max}}
\end{split}\end{align}
uniformly on $X$.
\end{thm}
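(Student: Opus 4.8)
The plan is to deduce the statement from Bismut's local index theorem \cite[Th.\,2.11]{Bismut89}; since the latter is phrased in slightly different notation, the substance of the proof is to set up the dictionary between the two, but I will also indicate the structure of the argument that lies behind it. First I would observe that $D_{p}=\sqrt{2}\big(\overline{\partial}^{L^{p}\otimes E}+\overline{\partial}^{L^{p}\otimes E,\ast}\big)$ is a Dirac operator on the Clifford module $\Lambda(T^{\ast(0,1)}X)\otimes L^{p}\otimes E$ whose Clifford connection is precisely $\nabla^{B,\Lambda^{0,\bullet}\otimes L^{p}\otimes E}$, the connection induced by the Bismut connection $\nabla^{B}$ on $TX$ together with $\nabla^{\det(T^{(1,0)}X)}$ and $\nabla^{L^{p}\otimes E}$ (cf.\ the explanation after (\ref{1.8})). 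The Lichnerowicz formula (\ref{1.11}) then writes $D_{p}^{2}$ as the Bochner Laplacian $\Delta^{B,\Lambda^{0,\bullet}\otimes L^{p}\otimes E}$ plus zeroth-order curvature and torsion terms; under the hypothesis $dT_{as}=0$ the summand $-\tfrac14\,^{c}(dT_{as})$ vanishes, which is exactly what puts $D_{p}^{2}$ in the form handled by the local index machinery.

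Next I would invoke the McKean--Singer principle: $\textup{Tr}_{s}\big[Q_{p,t}(x,x)\big]$ has a finite limit as $t\to 0$, equal to the local index density of $D_{p}$, and this density is what must be identified with the right-hand side of (\ref{1.1.13}). To evaluate it I would fix $x_{0}\in X$, use the normal-coordinate trivialisation of $\S$\ref{s4a} together with a Getzler-type rescaling of the Clifford variables, and pass to the limit $t\to 0$, in which the rescaled operator converges to a generalised harmonic oscillator. The crucial point --- and the second place where $dT_{as}=0$ is used --- is that the quadratic part of this limit is governed by the curvature of the connection $\nabla^{-B}=\nabla^{TX}+S^{-B}$ of (\ref{1.1.16}), rather than by $R^{TX}$ or $R^{B}$; this is exactly the sign reversal recorded before (\ref{1.1.12}), under which our $S^{B}$, $R^{-B}$ match $S^{-B}$, $R^{B}$ of \cite[\S II\,b)]{Bismut89}. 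Mehler's formula applied to the limiting oscillator produces the characteristic form $\widehat{A}\big(R^{-B}/2\pi\big)$, while the part of the Clifford module complementary to the spinor bundle, namely $\det(T^{(1,0)}X)^{1/2}\otimes L^{p}\otimes E$ in the identification $\Lambda(T^{\ast(0,1)}X)\simeq S(TX)\otimes\det(T^{(1,0)}X)^{1/2}$, contributes the factors $\exp\big(\tfrac{\sqrt{-1}}{4\pi}\textup{Tr}[R^{T^{(1,0)}X}]\big)$ and $\textup{Tr}\big[\exp\big(\tfrac{\sqrt{-1}}{2\pi}R^{L^{p}\otimes E}\big)\big]$. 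Extracting the top-degree component $\{\cdot\}^{\textup{max}}$ yields the density, and the uniformity in $x$ follows from the uniform control of the rescaling estimates.

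The main obstacle is to justify the Getzler-type rescaling when the Clifford connection is induced from a connection with \emph{non-vanishing} torsion, and in particular to prove that the limiting harmonic oscillator sees $\nabla^{-B}$ and not $\nabla^{B}$; this is the technical heart of \cite[\S II]{Bismut89}. Accordingly, in the write-up I would not reproduce that analysis but simply appeal to \cite[Th.\,2.11]{Bismut89} once the correspondence of notations --- including the replacement of the auxiliary bundle of \cite[Th.\,2.11]{Bismut89} by $L^{p}\otimes E$ --- has been made explicit.
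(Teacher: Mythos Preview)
Your proposal is correct and matches the paper's approach exactly: the paper does not give an independent proof but simply presents this theorem as a restatement of \cite[Th.\,2.11]{Bismut89}, after recording the notational correspondence (the auxiliary bundle $\xi$ there is $L^{p}\otimes E$ here, and the paper's $S^{B},R^{-B}$ are Bismut's $S^{-B},R^{B}$). Your write-up goes beyond the paper by sketching the Getzler-rescaling mechanism and the role of the hypothesis $dT_{as}=0$, but your bottom line --- appeal to \cite[Th.\,2.11]{Bismut89} once the dictionary is in place --- is precisely what the paper does.
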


Now we check the compatibility of our final result (\ref{2.1}) with (\ref{1.1.13}).

Mckean-Singer formula \cite[Th.\,3.50]{Berline04} also holds for the modified Dirac operator $D_{p}$:
\begin{align}\label{7.4}
\sum_{j=0}^{n}(-1)^{j}\ \dim H^{0, j}(X, L^{p}\otimes E)=\int_{X}\textup{Tr}_{s}\big[Q_{p,t}(x, x)\big]dv_{X}(x).
\end{align}

\noindent
Combining (\ref{1.19}), (\ref{1.1.13}) and (\ref{7.4}) yields
\begin{align}\label{1.1.15}
(-1)^{q}h^{0, q}_{p}=\int_{X}\widehat{A}\big(\frac{R^{-B}}{2\pi}\big)
\textup{exp}\big(\frac{1}{2}c_{1}(T^{(1,0)}X)\big)\textup{ch}(L^{p}\otimes E).
\end{align}

\noindent
If we expand the right hand side of the formula (\ref{1.1.15})
in a polynomial of degree $n$ in $p$, then it follows from (\ref{1.1.12}) that the term
$\widehat{A}(\frac{R^{-B}}{2\pi})$ has no contribution to the coefficients of $p^n$ and $p^{n-1}$.
Hence we obtain from (\ref{1.1.15}) that
\begin{align}\begin{split}\label{1.1.14}
(-1)^{q}h^{0, q}_{p}
=&\ \textup{rk}(E)\int_{X}\frac{c_{1}(L)^{n}}{n!}p^{n}+\int_{X}
 \Big(c_{1}(E)+\frac{\textup{rk}(E)}{2}c_{1}\big(T^{(1, 0)}X\big)\Big)\frac{c_{1}(L)^{n-1}}{(n-1)!}p^{n-1}
\\& +O(p^{n-2}),
\end{split}\end{align}
which is exactly the same expression as (\ref{1.19a}).
In this case our formula (\ref{2.1}) is also compatible with (\ref{1.1.13}).
This fits well the compatibility of the asymptotic expansion of Bergman kernel and the
local index theorem along the lines of \cite[Rem.\,4.1.4]{Ma07}, \cite[\S5.1]{Ma12}.

%%%%%%%%%%%%%%%%%%%%%%%%%%%%%%%%%%%%%%%%%%%
\section{A simple example} \label{s5}
%%%%%%%%%%%%%%%%%%%%%%%%%%%%%%%%%%%%%%%%%%%
In this section we provide an example in the K\"ahler case.

Let us first consider the 1-dimensional projective space $(\mathbb{C}\mathbb{P}^{1}, J)$
with the complex structure $J$. Let $(\mathcal{O}(-1), h^{\mathcal{O}(-1)})$ be the
tautological line bundle over $\mathbb{C}\mathbb{P}^{1}$, and
let $(\mathcal{O}(1), h^{\mathcal{O}(1)})$ be the dual of the line bundle $\mathcal{O}(-1)$,
i.e., $\mathcal{O}(1)=\mathcal{O}(-1)^{\ast}$. If we denote by
$R^{\mathcal{O}(1)}$ the curvature of the Chern connection on $(\mathcal{O}(1), h^{\mathcal{O}(1)})$
and by $\omega_{FS}$ the Fubini-Study form on $\mathbb{C}\mathbb{P}^{1}$, then
\begin{align}\label{1.1.1}
\omega_{FS}=\frac{\sqrt{-1}}{2\pi}R^{\mathcal{O}(1)}=c_{1}\big(\mathcal{O}(1)\big).
\end{align}
Let $T^{(1, 0)}\mathbb{C}\mathbb{P}^{1}$ be the holomorphic subbundle of the bundle
$T\mathbb{C}\mathbb{P}^{1}\otimes_{\mathbb{R}}\mathbb{C}$. It is straightforward to
verify the following formula
\begin{align}\label{1.1.2}
\textup{ch}(T^{(1, 0)}\mathbb{C}\mathbb{P}^{1})=\big[1+c_{1}\big(\mathcal{O}(1)\big)\big]^{2},
\end{align}
which implies immediately
\begin{align}\label{1.1.3}
c_{1}\big(T^{(1, 0)}\mathbb{C}\mathbb{P}^{1}\big)=2c_{1}\big(\mathcal{O}(1)\big).
\end{align}

Let $(L, h^{L})$ be a holomorphic line bundle over $\mathbb{C}\mathbb{P}^{1}$, and let
$R^{L}$ be the curvature of the Chern connection on $(L, h^{L})$.
Denote by $\omega:=\frac{\sqrt{-1}}{2\pi}R^{L}$ the symplectic form on $\mathbb{C}\mathbb{P}^{1}$.
Clearly,
\begin{align} \omega= \begin{cases} \label{1.1.4}
 -\omega_{FS}, & \mbox{if}\ (L, h^{L})=(\mathcal{O}(-1), h^{\mathcal{O}(-1)}); \\
 \omega_{FS}, & \mbox{if}\ (L, h^{L})=(\mathcal{O}(1), h^{\mathcal{O}(1)}). \end{cases}\end{align}

\noindent
Now combining (\ref{1.1.1}), (\ref{1.1.2}) and (\ref{1.1.4}) we obtain
\begin{align}\label{1.1.5}
\Lambda_{\omega}c_{1}\big(T^{(1, 0)}\mathbb{C}\mathbb{P}^{1}\big)=
\begin{cases}
-2, & \mbox{if}\ (L, h^{L})=(\mathcal{O}(-1), h^{\mathcal{O}(-1)});
\\
2, & \mbox{if}\ (L, h^{L})=(\mathcal{O}(1), h^{\mathcal{O}(1)}).
\end{cases}\end{align}

Set
$X=\mathbb{C}\mathbb{P}^{1}\times \ldots \times \mathbb{C}\mathbb{P}^{1}$ with $n$ copies
and
$L=L_{1}\boxtimes\ldots\boxtimes L_{q}\boxtimes L_{q+1}\boxtimes\ldots\boxtimes L_{n}$
with $L_{k}=\ \mathcal{O}(-1)$ if $k\leqslant q$ and
$L_{k}=\ \mathcal{O}(1)$ otherwise.
Let $h^{L_{k}}$ be the Hermitian metric on the $k$-component $L_{k}$ of the line bundle $L$, i.e.,
$h^{L_{k}}=h^{\mathcal{O}(-1)}$ if $k\leqslant q$ and $h^{L_{k}}=h^{\mathcal{O}(1)}$ otherwise,
and let $R^{L_{k}}$ be the curvature of the Chern connection on $(L_{k}, h^{L_{k}})$.
Denote by $\omega_{k}:=\frac{\sqrt{-1}}{2\pi}R^{L_{k}}$ the symplectic form on the $k$-th component $L_{k}$. Then
\begin{align}\label{1.1.6} \omega_{k}=\begin{cases}
-\omega_{FS}, & \mbox{if}\ 1\leqslant k\leqslant q;
\\
\omega_{FS}, & \mbox{if}\ k\geqslant q+1.
\end{cases}\end{align}
It is clear that the symplectic form $\omega_{L}:=\frac{\sqrt{-1}}{2\pi}R^{L}$ is now just the direct sum of the $\omega_{k}$'s, i.e.,
$\omega_{L}=\omega_{1}+\ldots+\omega_{n}$. Therefore the metric $g^{TX}$ on $X$ defined by (\ref{1.12a})
is now given by the one induced by the Fubini-Study metric $g^{T\mathbb{C}\mathbb{P}^{1}}$ on each factor $\mathbb{C}\mathbb{P}^{1}$ of $X$.
Moreover the skew-adjoint linear map ${\bf J}$ given by (\ref{1.12}) is preserved by the Levi-Civita connection $\nabla^{TX}$ on $(X, g^{TX})$,
i.e.,
\begin{align}\label{1.1.7}
\nabla^{X}{\bf J}=0.
\end{align}
Clearly,
\begin{align}\label{1.1.8}
\frac{\sqrt{-1}}{4}\textup{Tr}\big[\Lambda_{\omega_{L}}R^{T^{(1, 0)}X}\big]=
\frac{\pi}{2}\sum^{n}_{k=1}\Lambda_{\omega_{k}}c_{1}\big(T^{(1, 0)}\mathbb{C}\mathbb{P}^{1}\big)
\end{align}
Combining (\ref{1.1.4}), (\ref{1.1.5}), (\ref{1.1.6}) and (\ref{1.1.8}), we obtain
\begin{align}\label{1.1.9}
\frac{\sqrt{-1}}{4}\textup{Tr}\big[\Lambda_{\omega_{L}}R^{T^{(1, 0)}X}\big]=
\frac{\pi}{2}\big[-2q+2(n-q)\big]=\pi(n-2q).
\end{align}
Substituting (\ref{1.1.7}) and (\ref{1.1.9}) into our main result (\ref{1.12e}) (for K\"ahler manifolds) yields
\begin{align}\label{1.1.10}
{\bf b}_{1}=(n-2q)\cdot I_{\det(\overline{W}^{\ast})},
\end{align}
where the subbundle $W$ is now by definition, the direct sum $T^{(1, 0)}\mathbb{C}\mathbb{P}^{1}\oplus\ldots \oplus T^{(1, 0)}\mathbb{C}\mathbb{P}^{1}$ with $q$ copies over the first $q$-factors of $X$. In particular, if $q=0$,
then the formula (\ref{1.1.10}) reduces to
\begin{align}\label{1.1.11}
{\bf b}_{1}=n\cdot I_{\mathbb{C}}.
\end{align}
Note that  (\ref{1.12f}) and (\ref{1.1.11}) imply the well-known fact, that
the 1-dimensional projective space $(\mathbb{C}\mathbb{P}^{1}, g^{T\mathbb{C}\mathbb{P}^{1}})$ endowed with
the Fubini-Study metric $g^{T\mathbb{C}\mathbb{P}^{1}}$ has constant scalar curvature $8\pi$.

%%%%%%%%%%%%%%%%%%%%%%%%%%%%%%%%%%%%%%%%%%%
\section{Application to covering manifolds and homogeneous line bundles}
%%%%%%%%%%%%%%%%%%%%%%%%%%%%%%%%%%%%%%%%%%%
Let $(\tilde{X}, \tilde{J})$ be a paracompact complex manifold of dimension $n$, and
let $\Gamma$ be a discrete group acting holomorphically, freely and
properly discontinuously on $\tilde{X}$ such that the quotient $X=\tilde{X}/\Gamma$ is compact. Assume that there
exists a $\Gamma$-invariant holomorphic Hermitian line bundle $(\tilde{L}, h^{\tilde{L}})$ on $\tilde{X}$, such that
$\tilde{\omega}:=\frac{\sqrt{-1}}{2\pi}R^{\tilde{L}}$ is a symplectic form. The signature of the curvature $R^{\tilde{L}}$
(i.e., the pair of the numbers of negative and positive eigenvalues at a point) with respect to
any Riemannian metric on $T\tilde{X}$, compatible with $\tilde{J}$ is locally constant. We assume that the signature
is constant and equals $(q, n-q)$, $0\leqslant q\leqslant n$.

Let $g^{T\tilde{X}}$ be any $\Gamma$-invariant Riemannian metric on $T\tilde{X}$ compatible with $\tilde{J}$. Consider a $\Gamma$-invariant
holomorphic Hermitian vector bundle $(\tilde{E}, h^{\tilde{E}})$ on $\tilde{X}$. Let  $\Omega^{0, j}(\tilde{X}, \tilde{L}^{p}\otimes \tilde{E})$,
$\Omega^{0, \bullet}(\tilde{X}, \tilde{L}^{p}\otimes \tilde{E})$ be
the spaces defined as before, and let $\Omega_{c}^{0, \bullet}(\tilde{X}, \tilde{L}^{p}\otimes \tilde{E})$
be the subspace of $\Omega^{0, \bullet}(\tilde{X}, \tilde{L}^{p}\otimes \tilde{E})$ consisting of elements with compact support.
We introduce an $L^{2}$-scalar product
on $\Omega_{c}^{0, \bullet}(\tilde{X}, \tilde{L}^{p}\otimes \tilde{E})$ associated to $h^{\tilde{L}}, h^{\tilde{E}}, dv_{T\tilde{X}}$
as in (\ref{1.2}). Let
$L_{0, \bullet}^{2}(\tilde{X}, \tilde{L}^{p}\otimes \tilde{E})$ be the corresponding $L^{2}$-space. We consider the maximal extension
of $\overline{\partial}^{\tilde{L}^{p}\otimes \tilde{E}}$ (see \cite[(3.1.1)]{Ma07})
\begin{align}\label{8.1}
\Dom(\overline{\partial}^{\tilde{L}^{p}\otimes \tilde{E}})=\big\{u\in L_{0, \bullet}^{2}(\tilde{X}, \tilde{L}^{p}\otimes \tilde{E}),\
\overline{\partial}^{\tilde{L}^{p}\otimes \tilde{E}}u\in L_{0, \bullet}^{2}(\tilde{X}, \tilde{L}^{p}\otimes \tilde{E})\big\},
\end{align}
where $\overline{\partial}^{\tilde{L}^{p}\otimes \tilde{E}}u$ is calculated in the sense of distribution. By
replacing $\overline{\partial}^{\tilde{L}^{p}\otimes \tilde{E}}$ with $\overline{\partial}^{\tilde{L}^{p}\otimes \tilde{E}, \ast}$
in (\ref{8.1}) we obtain the maximal extension of the formal adjoint $\overline{\partial}^{\tilde{L}^{p}\otimes \tilde{E}, \ast}$
of $\overline{\partial}^{\tilde{L}^{p}\otimes \tilde{E}}$. The Hodge-Dolbeault operator is defined by
\begin{align}\begin{split}\label{8.2}
\Dom(\tilde{D}_{p})=&\Dom(\overline{\partial}^{\tilde{L}^{p}\otimes \tilde{E}})\cap \Dom(\overline{\partial}^{\tilde{L}^{p}\otimes \tilde{E}, \ast}),
\\
\tilde{D}_{p}=&\sqrt{2}(\overline{\partial}^{\tilde{L}^{p}\otimes \tilde{E}}+\overline{\partial}^{\tilde{L}^{p}\otimes \tilde{E}, \ast}).
\end{split}\end{align}
Note that $g^{T\tilde{X}}$ is complete, being the pullback of a Riemannian metric on $X$. Then by
the Andreotti-Vesentini Lemma \cite[Lemma\,3.3.1]{Ma07}, the maximal extension of $\overline{\partial}^{\tilde{L}^{p}\otimes \tilde{E}, \ast}$
coincides with the Hilbert space adjoint of the maximal extension of $\overline{\partial}^{\tilde{L}^{p}\otimes \tilde{E}}$.

The space of harmonic forms is defined by
\begin{align}\label{8.3}
\mathcal{H}^{0, \bullet}(\tilde{X}, \tilde{L}^{p}\otimes \tilde{E})=\Ker \tilde{D}^{2}_{p}=\Ker \overline{\partial}^{\tilde{L}^{p}\otimes \tilde{E}}\cap \Ker
\overline{\partial}^{\tilde{L}^{p}\otimes \tilde{E}, \ast}.
\end{align}

\begin{thm}[Andreotti-Grauert $L^{2}$ vanishing theorem]\label{t8.1}
In the conditions as above we have
\begin{align}\label{8.4}
\mathcal{H}^{0, j}(\tilde{X}, \tilde{L}^{p}\otimes \tilde{E})=0,\ \textup{for}\ j\neq q,\  p\gg 1.
\end{align}
\end{thm}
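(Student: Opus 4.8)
The plan is to deduce Theorem \ref{t8.1} from a uniform (in $p$) Bochner-type lower bound for $\tilde D_p$ on compactly supported forms, transplanted from the compact case, and then to pass to $L^2$-harmonic forms by a cut-off argument exploiting the completeness of $g^{T\tilde X}$. First I would observe that the Lichnerowicz formula (\ref{1.11}) --- more precisely the version valid for an arbitrary $\tilde J$-compatible metric, cf.\ \cite[\S1.4]{Ma07} --- is a \emph{local} identity, hence holds on $\tilde X$ with all data replaced by their tildes: $\tilde D_p^2=\tilde\Delta_p+\tfrac12 p\,R^{\tilde L}(e_i,e_j)c(e_i)c(e_j)+\tilde{\mathcal R}_p$, where $\tilde\Delta_p$ is the Bochner Laplacian of $\nabla^{B,\Lambda^{0,\bullet}\otimes \tilde L^p\otimes \tilde E}$ and $\tilde{\mathcal R}_p$ is a zeroth-order operator assembled from $r^{\tilde X}$, $R^{\tilde E}$, $\textup{Tr}[R^{T^{(1,0)}\tilde X}]$, $dT_{as}$ and $|T_{as}|^2$. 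Since $\tilde L,\tilde E,g^{T\tilde X}$ are $\Gamma$-invariant, all these tensors, together with $R^{\tilde L}$, descend to the \emph{compact} quotient $X$; hence $\|\tilde{\mathcal R}_p\|_{L^\infty(\tilde X)}\le C$ with $C$ independent of $p$, and $\mu_0:=\inf_{\tilde X}\big(\text{smallest modulus of an eigenvalue of }R^{\tilde L}\big)>0$ because $R^{\tilde L}$ is non-degenerate.

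The second step is the estimate: for every $j\neq q$ there are constants $c_0>0$ and $C>0$ (independent of $p$, with $c_0$ a fixed positive multiple of $\mu_0$) such that $\|\tilde D_p s\|^2\ge (c_0 p-C)\|s\|^2$ for all $s\in\Omega_c^{0,j}(\tilde X,\tilde L^p\otimes \tilde E)$. This is precisely the inequality underlying the compact Andreotti--Grauert vanishing (\ref{1.19}); see \cite[(1.29)]{Ma06}, \cite[(8.2.18)]{Ma07} and the Bochner--Kodaira-type analysis of the Lichnerowicz formula there. The feature that makes it work for $j\neq q$ is that, by the spectral description recalled in (\ref{1.17})--(\ref{1.18}), the leading-order ($p$-linear) part of $\tilde D_p^2$ has kernel exactly the line $\det(\overline W^\ast)$, which sits in bidegree $(0,q)$; hence on bidegree $(0,j)$ with $j\neq q$ a gap of size a positive multiple of $\mu_0 p$ opens, the non-degeneracy of $R^{\tilde L}$ (equivalently of its descent to the compact $X$) supplying the uniform constant. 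Because the computation is pointwise and all data are $\Gamma$-invariant, the compact estimate carries over to $\tilde X$ verbatim.

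For the third step, let $s\in\mathcal H^{0,j}(\tilde X,\tilde L^p\otimes \tilde E)$ with $j\neq q$. By (\ref{8.3}) we have $\overline\partial^{\tilde L^p\otimes \tilde E}s=0$ and $\overline\partial^{\tilde L^p\otimes \tilde E,\ast}s=0$, hence $\tilde D_p s=0$; moreover $\tilde D_p^2 s=0$ and ellipticity of $\tilde D_p^2$ force $s$ to be smooth. Since $g^{T\tilde X}$ is complete (being the pullback of a metric on the compact $X$), pick $\varphi_\ell\in C_c^\infty(\tilde X)$ with $0\le\varphi_\ell\le1$, $\varphi_\ell\uparrow 1$ and $\sup_{\tilde X}|d\varphi_\ell|\to 0$. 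Applying the estimate of the second step to $\varphi_\ell s\in\Omega_c^{0,j}$ and using $\tilde D_p(\varphi_\ell s)=c(d\varphi_\ell)s+\varphi_\ell\tilde D_p s=c(d\varphi_\ell)s$, one gets $(c_0p-C)\|\varphi_\ell s\|^2\le\|c(d\varphi_\ell)s\|^2\le C_1(\sup_{\tilde X}|d\varphi_\ell|)^2\|s\|^2$ for a dimensional constant $C_1$; letting $\ell\to\infty$ (monotone convergence on the left) gives $(c_0p-C)\|s\|^2\le 0$, so $s=0$ as soon as $p>C/c_0$, which is (\ref{8.4}).

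\textbf{Main obstacle.} The heart of the matter is the uniform Bochner estimate of the second step: one must identify the positive model operator on $\Lambda^{0,j}$ for $j\neq q$ and control the interaction between the non-negative rough Laplacian $\tilde\Delta_p$ and the \emph{indefinite} zeroth-order term $\tfrac12 p\,R^{\tilde L}(e_i,e_j)c(e_i)c(e_j)$; in the compact setting this is exactly the Bochner--Kodaira estimate behind Andreotti--Grauert, so the only genuinely new point here is to verify that every constant is controlled on the compact quotient $X$. The second, more routine, ingredient --- the cut-off/density passage from $\Omega_c^{0,\bullet}$ to $\mathcal H^{0,\bullet}$ --- is where completeness of $g^{T\tilde X}$ is indispensable (it is already invoked, via \cite[Lemma\,3.3.1]{Ma07}, to control the closed extensions of $\overline\partial^{\tilde L^p\otimes \tilde E}$).
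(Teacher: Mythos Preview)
Your proposal is correct and follows essentially the same approach as the paper: establish the uniform Bochner-type lower bound $\|\tilde D_p s\|^2\ge (c_0 p-C)\|s\|^2$ for $j\neq q$ by transplanting the compact-case estimate via $\Gamma$-invariance, then conclude vanishing of $\mathcal H^{0,j}$ for $p\gg 1$. The paper's proof is terser --- it states the estimate directly for $s\in\Dom(\tilde D_p)\cap\Omega^{0,j}$ (citing \cite[Th.\,1.5]{Ma06}) and concludes immediately --- whereas you make the cut-off/completeness passage explicit; but this is precisely what the paper has in mind, since the Andreotti--Vesentini lemma invoked just before the theorem guarantees that compactly supported forms are dense in $\Dom(\tilde D_p)$ in the graph norm.
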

\begin{proof}
By proceeding as in \cite[Th.\,1.5]{Ma06} we obtain that there exists $C>0$ such that
for $j\neq q$,
\begin{align}\label{8.5}
\big\|\tilde{D}_{p}s\big\|^{2}\geqslant (2p\mu_{0}-C)\big\|s\big\|^{2},\ \ s\in \Dom(\tilde{D}_{p})\cap
\Omega^{0, j}(\tilde{X}, \tilde{L}^{p}\otimes \tilde{E}),
\end{align}
where $\mu_{0}$ denotes the infimum over $X$ of the absolute values of the eigenfunctions of $R^{L}$. This estimate immediately
implies that
\begin{align}
\Ker \tilde{D}_{p}\cap \Omega^{0, j}(\tilde{X}, \tilde{L}^{p}\otimes \tilde{E})=\{0\},\ \textup{for}\ j\neq q,\ p\gg 1.
\end{align}
This completes the proof the Theorem \ref{t8.1}.
\end{proof}

Theorem \ref{t8.1} was first proved by Braverman \cite[Cor.\,3.6]{Braverman99}.

Define the Bergman kernel $\tilde{P}_{p}(\cdot, \cdot)$ (resp. $\tilde{P}^{0, q}_{p}(\cdot, \cdot)$) as the kernel of the orthogonal projection
$\tilde{P}_{p}: L_{0, \bullet}^{2}(\tilde{X}, \tilde{L}^{p}\otimes \tilde{E})\rightarrow \Ker \tilde{D}_{p}$
(resp. $\tilde{P}^{0, q}_{p}: L_{0, q}^{2}(\tilde{X}, \tilde{L}^{p}\otimes \tilde{E})\rightarrow \Ker \tilde{D}_{p}$).
By Theorem \ref{t8.1}, the kernel $\tilde{P}^{0, q}_{p}(\cdot, \cdot)$ coincides with $\tilde{P}_{p}(\cdot, \cdot)$
for $p\gg 1$. Denote by $\tilde{\Theta}$ the K\"ahler form associated to $g^{T\tilde{X}}$.
Then the Bergman kernel $\tilde{P}^{0, q}_{p}(\cdot, \cdot)$ has an asymptotic expansion on compact sets of
$X$ analogue to Theorem \ref{t1.1}.

\begin{thm}\label{t8.2}
There exist smooth coefficients $\tilde{\bf b}_{r}(x)\in \textup{End}\big(\Lambda^{q}(T^{\ast(0,1)}\tilde{X})\otimes \tilde{E}\big)_{x}$,
which are polynomials in $R^{T\tilde{X}}, R^{\tilde{E}} \ (\textup{and}\ d\tilde{\Theta}, R^{\tilde{L}})$ and
their derivatives of order $\leqslant 2r-2\ (\textup{resp}. \leqslant 2r-1, 2r)$ at $x$, such that
for any compact set $K\subset \tilde{X}$ and any $k,l\in \mathbb{N}$, there exists $C_{k,l}>0$ with
\begin{align}\label{8.6a}
\Big|\tilde{P}^{0,q}_{p}(x,x)-\sum^{k}_{r=0}\tilde{\bf b}_{r}(x)p^{n-r} \Big|_{C^{l}(K)}\leqslant C_{k,l}p^{n-k-1}
\end{align}
for any $p\in \mathbb{N}$.
 \end{thm}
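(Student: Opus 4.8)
The plan is to show that the diagonal expansion of $\tilde{P}^{0,q}_p(\cdot,\cdot)$ is a local invariant of the geometry near $x$, and then to read it off from the compact quotient $X=\tilde{X}/\Gamma$, where it is furnished by Theorem \ref{t1.1}. Writing $L=\tilde{L}/\Gamma$, $E=\tilde{E}/\Gamma$ and $g^{TX}$ for the metric descended from the $\Gamma$-invariant metric $g^{T\tilde{X}}$, the covering map $\pi\colon\tilde{X}\to X$ is a local isometry that intertwines $\tilde{D}_p$ with the Hodge-Dolbeault operator $D_p$ on $X$. The analytic inputs I will use are: (i) $\tilde{D}_p$ is essentially self-adjoint, because $g^{T\tilde{X}}$, being the pullback of a metric on the compact $X$, is complete (see the remark after \textup{(\ref{8.2})}); (ii) a uniform spectral gap for $\tilde{D}_p^2$; and (iii) the finite propagation speed of $\cos(s\tilde{D}_p)$, which is a consequence of (i) and completeness.

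First I would establish the spectral gap: for $p$ large, $\textup{Spec}(\tilde{D}_p^2)\subset\{0\}\cup[2p\mu_0-C,+\infty)$ for a constant $C>0$ depending only on the geometry of the compact $X$, where $\mu_0>0$ is the infimum over $X$ of the absolute values of the eigenvalues of $R^{L}$. In degrees $j\neq q$ this is the estimate \textup{(\ref{8.5})} used in the proof of Theorem \ref{t8.1}; in degree $q$ one argues exactly as in the compact case \cite{Ma06}, rescaling near a point and comparing with the model operator, using the Lichnerowicz formula \textup{(\ref{1.11})}, the lower bound \textup{(\ref{2.25})} for $\omega_d$ on $(\det(\overline{W}^{\ast}))^{\bot}$, and the spectrum of $\mathcal{L}_0$ from Theorem \ref{t2.3}, the $\Gamma$-invariance of all data guaranteeing that the constants are those of $X$. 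This is the step I expect to be the main obstacle: on the non-compact $\tilde{X}$ one cannot invoke compactness to make the spectrum near $0$ discrete, so the gap must be extracted directly from the uniform Lichnerowicz-type estimate. Note that $\Ker\tilde{D}_p^2$ need not be finite dimensional, but it is a closed subspace of $L^{2}_{0,\bullet}(\tilde{X},\tilde{L}^p\otimes\tilde{E})$ and, $\tilde{D}_p$ being elliptic, the orthogonal projection $\tilde{P}_p$ onto it is a smoothing operator; by Theorem \ref{t8.1}, $\tilde{P}_p=\tilde{P}^{0,q}_p$ for $p\gg1$, so it suffices to expand $\tilde{P}_p(x,x)$.

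With the gap available I would localize. Using it, one constructs (as in \cite[\S4.1]{Ma07}) even functions $F_p\in C^{\infty}(\mathbb{R})$ with $F_p\equiv1$ near $0$ whose Fourier transforms are supported in $(-\varepsilon,\varepsilon)$, with $\varepsilon$ less than the injectivity radius of $X$, such that the smooth kernels of $F_p(\tilde{D}_p)-\tilde{P}_p$ on $\tilde{X}$ and of $F_p(D_p)-P^{0,q}_p$ on $X$ are $O(p^{-\infty})$ in every $C^m$-norm on compact sets. By finite propagation speed, $F_p(\tilde{D}_p)(x,x')$ depends only on the restriction of $\tilde{D}_p$ to $B^{\tilde{X}}(x,\varepsilon)$ and vanishes for $d(x,x')\geq\varepsilon$; since $\pi$ is a local isometry intertwining $\tilde{D}_p$ and $D_p$ over $\varepsilon$-balls, it follows that $\tilde{P}_p(x,x)=F_p(\tilde{D}_p)(x,x)+O(p^{-\infty})=F_p(D_p)(\pi(x),\pi(x))+O(p^{-\infty})=P^{0,q}_p(\pi(x),\pi(x))+O(p^{-\infty})$. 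Theorem \ref{t1.1} then yields \textup{(\ref{8.6a})} with $\tilde{\bf b}_r(x):={\bf b}_r(\pi(x))$; by pullback this is a polynomial in $R^{T\tilde{X}},R^{\tilde{E}}$ (and $d\tilde{\Theta},R^{\tilde{L}}$) and their derivatives of the orders stated in Theorem \ref{t1.1}, and uniformity over a compact set $K\subset\tilde{X}$ is inherited from that of \textup{(\ref{1.22})} over the compact set $\pi(K)\subset X$.
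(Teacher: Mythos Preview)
Your proposal is correct and follows essentially the same route as the paper: the paper simply cites \cite[Th.\,6.1.4]{Ma07} and \cite[(6.1.16)]{Ma07} to obtain $\tilde{P}_{p}(x,x)-P_{p}(\pi_{\Gamma}(x),\pi_{\Gamma}(x))=O(p^{-\infty})$ and then invokes Theorem~\ref{t1.1}, whereas you have unpacked the content of those cited results (spectral gap plus finite propagation speed localization via $F_p(\tilde{D}_p)$). The identification $\tilde{\bf b}_r(x)={\bf b}_r(\pi_{\Gamma}(x))$ you arrive at is exactly what the paper records in \textup{(\ref{8.7})}.
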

 \begin{proof}
 This follows from \cite[Th.\,6.1.4]{Ma07} and Theorem \ref{t1.1}.
 In fact, let $\pi_{\Gamma}$ be the projection $\tilde{X}\rightarrow X$, and let $P_{p}$ be the
 Bergman kernel on $X=\tilde{X}/\Gamma$. By \cite[(6.1.16)]{Ma07} we obtain
 \begin{align}\label{8.6}
 \tilde{P}_{p}(x, x)-P_{p}(\pi_{\Gamma}(x), \pi_{\Gamma}(x))=O(p^{-\infty}).
 \end{align}
 Then the assertion follows immediately from  (\ref{8.6}) and Theorem \ref{t1.1}.
 \end{proof}

 Choose now the metric $g^{TX}$ as in (\ref{1.12a}).

 \begin{thm}\label{t8.3}
 The coefficient $\tilde{\bf b}_{1}$ of the expansion in Theorem \ref{t8.2} is given by \textup{(\ref{2.1})}.
 \end{thm}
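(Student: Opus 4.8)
The plan is to reduce Theorem \ref{t8.3} to the explicit computation already carried out in the compact case, exploiting the off-diagonal comparison between the Bergman kernels on $\tilde{X}$ and on the quotient $X=\tilde{X}/\Gamma$. First I would invoke the estimate (\ref{8.6}) established in the proof of Theorem \ref{t8.2}: via \cite[(6.1.16)]{Ma07} one has $\tilde{P}_{p}(x,x)-P_{p}(\pi_{\Gamma}(x),\pi_{\Gamma}(x))=O(p^{-\infty})$ uniformly on compact subsets of $\tilde{X}$, together with all derivatives in $x$. Since the coefficients in an asymptotic expansion of the form (\ref{1.22}) (resp. (\ref{8.6a})) are uniquely determined, this forces $\tilde{\bf b}_{r}(x)={\bf b}_{r}(\pi_{\Gamma}(x))$ for every $r$, under the identification of $\Lambda^{q}(T^{\ast(0,1)}\tilde{X})\otimes\tilde{E}$ near $x$ with $\Lambda^{q}(T^{\ast(0,1)}X)\otimes E$ near $\pi_{\Gamma}(x)$ induced by the local diffeomorphism $\pi_{\Gamma}$ (which is a local biholomorphic isometry intertwining all the relevant connections, since the data on $X$ are by construction the descents of the $\Gamma$-invariant data on $\tilde{X}$). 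In particular $\tilde{\bf b}_{1}(x)$ is the pullback of ${\bf b}_{1}$.

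Next I would apply Theorem \ref{t2.1}, which expresses ${\bf b}_{1}$ on $X$ through the right-hand side of (\ref{2.1}). Every term there is assembled from the pointwise tensors $R^{E}$, $R^{T^{(1,0)}X}$, $R^{B}$, $T_{as}$, $dT_{as}$, $\nabla^{B}{\bf J}$, $\nabla^{B}\nabla^{B}{\bf J}$ and from the local orthonormal frames $u_{j}$ adapted to ${\bf J}$, all of which are built fibrewise from $(g^{TX},h^{L},\nabla^{L},h^{E},\nabla^{E})$. Pulling back by $\pi_{\Gamma}$ turns each of these into the analogous tensor on $\tilde{X}$, so the formula (\ref{2.1}) transports verbatim and yields the asserted expression for $\tilde{\bf b}_{1}$.

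Alternatively, and more self-containedly, one can rerun the whole computation directly on $\tilde{X}$: the Lichnerowicz formula (\ref{1.11}), the rescaling (\ref{2.3}), the model operator $\mathcal{L}_{0}$ and the Taylor expansion (\ref{2.7}) are entirely local constructions on a geodesic ball, and the completeness of the pullback metric $g^{T\tilde{X}}$ supplies the spectral gap needed for the expansion (\ref{8.6a}); the only non-local ingredient used in the compact proof, namely the degree-$q$ concentration coming from Andreotti--Grauert (\ref{1.19}), is here replaced by its $L^{2}$ analogue, Theorem \ref{t8.1}. The computations producing (\ref{2.100c}) and hence (\ref{2.1}) then go through word for word at any point of $\tilde{X}$. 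I do not anticipate a genuine obstacle here; the only points deserving care are the uniqueness of the asymptotic coefficients in (\ref{8.6a}) --- which is standard --- and the routine verification that the bundle identification furnished by $\pi_{\Gamma}$ is compatible with every structure entering (\ref{2.1}).
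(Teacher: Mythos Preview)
Your proposal is correct and follows essentially the same route as the paper: invoke the comparison (\ref{8.6}) to deduce $\tilde{\bf b}_{1}(x)={\bf b}_{1}(\pi_{\Gamma}(x))$, and then observe that all the geometric data entering (\ref{2.1}) are pullbacks under $\pi_{\Gamma}$ so the formula transports verbatim. Your write-up is more detailed (spelling out the uniqueness of asymptotic coefficients and the compatibility of the bundle identifications) and you add a useful alternative local argument, but neither departs from the paper's strategy.
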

 \begin{proof}
 It follows from (\ref{8.6}) that the Bergman kernel $\tilde{P}_{p}(\cdot, \cdot)$ has the same asymptotic expansion
 as $P_{p}(\pi_{\Gamma}(\cdot), \pi_{\Gamma}(\cdot))$. Hence,
 \begin{align}\label{8.7}
 \tilde{\bf b}_{1}(x)={\bf b}_{1}(\pi_{\Gamma}(x)).
 \end{align}
 Since the metrics on $\tilde{X}, \tilde{L}, \tilde{E}$ are pullbacks of metrics on
 $X, L, E$, then the assertion follows from (\ref{8.7}) and (\ref{2.1}).
 \end{proof}

\begin{rem}
 (i). Let $\textup{Ind}_{\Gamma}(\tilde{D}_{p}), \textup{Ind}(D_{p})$ denote
 the $L^{2}$-index of $\tilde{D}_{p}$ and the index of $D_{p}$, respectively.
 It follows from the $L^{2}$-index Theorem \cite[Th.\,(3.8)]{Atiyah76} that
 \begin{align}\label{8.8}
 \textup{Ind}_{\Gamma}(\tilde{D}_{p})=\textup{Ind}(D_{p}).
 \end{align}
 That is,
 \begin{align}\label{8.10}
 \sum_{j=0}^{n}(-1)^{j}\dim_{\Gamma}\mathcal{H}^{0, j}(\tilde{X}, \tilde{L}^{p}\otimes \tilde{E})
 =\sum_{j=0}^{n}(-1)^{j}\dim H^{0, j}(X, L^{p}\otimes E),
 \end{align}
 where $\dim_{\Gamma}$ denotes the von Neumann dimension.
 Let $h^{0, q}_{p}$ be the dimension of the space $H^{0, q}(X, L^{p}\otimes E)$.
 Combining (\ref{1.19}), (\ref{8.4}) and (\ref{8.10}) yields
 \begin{align}\label{8.8a}
 \dim_{\Gamma}\mathcal{H}^{0, q}(\tilde{X}, \tilde{L}^{p}\otimes \tilde{E})=h^{0, q}_{p},\ \textup{for}\
 p\gg 1.
 \end{align}
 (ii). We denote by $U$ a fundamental domain of $\tilde{X}$ and by $dv_{\tilde{X}}$ the volume form of $(\tilde{X}, g^{T\tilde{X}})$.
 By \cite[(3.6.12)]{Ma07}) and (\ref{8.6a}) we obtain
 \begin{align*}\begin{split}
 \dim_{\Gamma}\mathcal{H}^{0, q}(\tilde{X}, \tilde{L}^{p}\otimes \tilde{E})
 =&\int_{U}\textup{Tr}\big[\tilde{P}^{0, q}_{p}(x, x)\big]dv_{\tilde{X}}(x)
 \\ =&p^n\int_{U} \textup{Tr}\big[\tilde{b}_0(x)\big]dv_{\tilde{X}}(x) +p^{n-1}\int_{U} \textup{Tr}\big[\tilde{b}_1(x)\big]dv_{\tilde{X}}(x)+O(p^{n-2}).
 \end{split}\end{align*}
 \end{rem}

 Let us now consider the case of homogeneous line bundles.
 Let $G$ be a connected noncompact real semi-simple Lie group having a Cartan subgroup $H\subset G$. Let
 $K\supset H$ be a maximal compact subgroup of $G$. Let $\mathfrak{g}, \mathfrak{k}, \mathfrak{h}$ be the
 Lie algebras of $G, K, H$ and $\mathfrak{g}_{\mathbb{C}}, \mathfrak{k}_{\mathbb{C}}, \mathfrak{h}_{\mathbb{C}}$
 their complexifications. Denote by $\Delta$ the set of roots of
 $(\mathfrak{g}_{\mathbb{C}}, \mathfrak{h}_{\mathbb{C}})$ and let $P\subset \Delta$ be a system of positive roots.
 Let $\tilde{X}=G/H$ and let $\tilde{L}_{\lambda}\rightarrow \tilde{X}$ be the line bundle induced on $\tilde{X}$
 by the character $\lambda$ of $H$. By
 \cite[\S 1]{Griffiths69} the choice of the positive root system $P$ defines a complex structure on $\tilde{X}$
 and $\tilde{L}_{\lambda}$,  such that $\tilde{L}_{\lambda}\rightarrow \tilde{X}$ is a holomorphic line bundle.

 By a theorem of Borel \cite{Borel63}, there exists a discrete group $\Gamma\subset G$ which acts freely on $\tilde{X}$
 such that $X=\tilde{X}/\Gamma$ is compact. Moreover, the action of $\Gamma$ lifts to an action on $\tilde{L}_{\lambda}$.
 The following result is due to Griffiths and Schmidt, see \cite[Th.\,4.17D]{Griffiths69}.
 \begin{thm}\label{t8.4}
 Let $\big\langle\cdot, \cdot\big\rangle$ be the scalar product on $\mathfrak{h}^{\ast}$ induced by
 the Cartan-Killing form on $\mathfrak{h}$. Assume that $\lambda$ is a character on $H$ such that
 $\big\langle\lambda, \alpha\big\rangle\neq 0$ for any $\alpha\in \Delta$. Let $\Delta=\Delta_{c}\cup \Delta_{n}$
  be the decomposition of $\Delta$ in compact and noncompact roots with respect to $K$. Set
  \begin{align}
  \iota(\lambda)=\sharp\big\{\alpha\in P\cap \Delta_{c}: \big\langle\lambda, \alpha\big\rangle<0\big\}
  +\sharp \big\{\alpha\in P\cap \Delta_{n}, \big\langle\lambda, \alpha\big\rangle>0\big\}.
  \end{align}
  Let $\rho$ denote the half-sum of the positive roots of $(\mathfrak{g}, \mathfrak{h})$ and
  $q=\iota(\lambda+\rho)$. Then $\tilde{L}_{\lambda}$ admits a $\Gamma$-invariant Hermitian metric $h^{\tilde{L}_{\lambda}}$ whose
  curvature $R^{\tilde{L}_{\lambda}}$ is nondegenerate and has signature $(q, n-q)$.
 \end{thm}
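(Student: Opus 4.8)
The plan is to follow the argument of Griffiths and Schmid \cite{Griffiths69}: construct the metric from homogeneity, compute its Chern curvature by a Lie-algebra calculation at the base point $o=eH$, and read the signature off the root data. First, since $H\subset K$ and $K$ is compact, $H$ is compact, so the character $\lambda\colon H\to\mathbb C^{\times}$ is unitary; hence $\mathbb C_{\lambda}$ carries an $H$-invariant Hermitian inner product, unique up to a positive constant, and transporting it along the fibres of $\tilde L_{\lambda}=G\times_{H}\mathbb C_{\lambda}\to\tilde X=G/H$ produces a $G$-invariant Hermitian metric $h^{\tilde L_{\lambda}}$, which is a fortiori $\Gamma$-invariant because $\Gamma\subset G$. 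Its Chern curvature $R^{\tilde L_{\lambda}}$ is then a $G$-invariant real $(1,1)$-form, so it is determined by the Hermitian form it induces on $T^{(1,0)}_{o}\tilde X$; in particular, $G$ acting transitively, the signature of $R^{\tilde L_{\lambda}}$ is the same at every point of $\tilde X$, which is the constancy hypothesis recorded before the statement.

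The technical core is the curvature computation at $o$. Decompose $\mathfrak g_{\mathbb C}=\mathfrak h_{\mathbb C}\oplus\bigoplus_{\alpha\in\Delta}\mathfrak g_{\alpha}$; the positive system $P$ identifies $T^{(1,0)}_{o}\tilde X$ with $\bigoplus_{\alpha\in P}\mathfrak g_{\alpha}$ and $T^{(0,1)}_{o}\tilde X$ with $\bigoplus_{\alpha\in P}\mathfrak g_{-\alpha}$ (the opposite choice merely interchanges $q$ and $n-q$ below). Let $\tau$ be the conjugation of $\mathfrak g_{\mathbb C}$ with respect to the real form $\mathfrak g$; for a compact Cartan one has $\tau(\mathfrak g_{\beta})=\mathfrak g_{-\beta}$, so we may choose root vectors $X_{\pm\alpha}\in\mathfrak g_{\pm\alpha}$ ($\alpha\in P$) normalized by $\langle X_{\alpha},X_{-\alpha}\rangle=1$ in the Killing form and write $\tau X_{\alpha}=c_{\alpha}X_{-\alpha}$ with $c_{\alpha}\in\mathbb R$. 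The key structural point is that the real vector $X_{\alpha}+\tau X_{\alpha}$ lies in $\mathfrak k$ when $\alpha\in\Delta_{c}$ and in $\mathfrak p$ when $\alpha\in\Delta_{n}$; since the Killing form is negative definite on $\mathfrak k$ and positive definite on $\mathfrak p$, the constant $c_{\alpha}$ is $<0$ for compact roots and $>0$ for noncompact roots---this is precisely the asymmetry between $\Delta_{c}$ and $\Delta_{n}$ that appears in the definition of $\iota$. Taking the holomorphic frame $e(Z)=[\exp Z,1]$, with $Z=\sum_{\alpha\in P}z_{\alpha}X_{\alpha}$ holomorphic coordinates coming from the big cell of the flag manifold into which $\tilde X$ sits as an open $G$-orbit, one writes $\exp Z=g\cdot\exp(\mathcal H(Z,\bar Z))\cdot n$ with $g\in G$, $\mathcal H\in\mathfrak h_{\mathbb C}$, $n$ unipotent, uses $\tau g=g$ to match $\mathfrak h_{\mathbb C}$-components through second order, and finds that $\log h^{\tilde L_{\lambda}}(e(Z))$ is, up to cubic terms, a fixed nonzero multiple of $\sum_{\alpha\in P}c_{\alpha}\langle\lambda,\alpha\rangle\,|z_{\alpha}|^{2}$. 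Hence $R^{\tilde L_{\lambda}}$ at $o$ is diagonal in the $\mathfrak g_{\alpha}$-directions, the $\alpha$-entry being a fixed positive multiple of $c_{\alpha}\langle\lambda,\alpha\rangle$; it is nondegenerate exactly when no relevant pairing vanishes, and the number of negative directions is $\#\{\alpha\in P\cap\Delta_{c}:\langle\lambda,\alpha\rangle<0\}+\#\{\alpha\in P\cap\Delta_{n}:\langle\lambda,\alpha\rangle>0\}$.

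It remains to identify this count with $q=\iota(\lambda+\rho)$, i.e. to account for the shift by $\rho$. This is a matter of normalization: in the Hodge--Dolbeault framework the line bundle whose curvature signature governs the problem carries the weight of $\tilde L_{\lambda}$ twisted by a square root of $\det T^{(1,0)}\tilde X$, and the $H$-weight of $\det T^{(1,0)}\tilde X$ is $\sum_{\alpha\in P}\alpha=2\rho$, so the relevant weight is $\lambda+\rho$; equivalently this is the normalization of $\tilde L_{\lambda}$ adopted by Griffiths and Schmid, and substituting $\lambda+\rho$ for $\lambda$ in the count of the previous paragraph yields $\iota(\lambda+\rho)=q$, the regularity hypothesis being exactly what makes $R^{\tilde L_{\lambda}}$ nondegenerate. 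I expect the genuine obstacles to be precisely these last two points: carrying the Gauss-decomposition expansion through with the correct constants and the correct compact/noncompact sign in the middle step, and pinning down the normalization of $\tilde L_{\lambda}$ so that the shift comes out as $+\rho$ (rather than $-\rho$, or none); for this bookkeeping one should in the end compare directly with \cite[Th.\,4.17D]{Griffiths69}.
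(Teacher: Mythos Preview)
The paper does not give a proof of this theorem: it is stated with attribution (``The following result is due to Griffiths and Schmidt, see \cite[Th.\,4.17D]{Griffiths69}'') and then used as a black box to feed line bundles of mixed signature into Theorem~\ref{t8.3}. So there is nothing in the paper to compare your argument against beyond the bare citation.

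Your sketch is in the spirit of the cited Griffiths--Schmid computation: build the $G$-invariant metric from the unitary $H$-character, compute the Chern curvature at $o=eH$ via the root-space decomposition, and read off the signature from the signs $c_{\alpha}\langle\lambda,\alpha\rangle$, with the compact/noncompact dichotomy entering through the sign of $c_{\alpha}$. That part is correct in outline. The place where your argument is not yet a proof is exactly the one you flag yourself: the $\rho$-shift. Your explanation---that the Hodge--Dolbeault operator secretly sees $\tilde L_{\lambda}\otimes(\det T^{(1,0)}\tilde X)^{1/2}$, whose weight is $\lambda+\rho$---is not the mechanism in Griffiths--Schmid; in their formula the shift by $\rho$ arises directly in the curvature computation for $\tilde L_{\lambda}$ itself (through the second-order term of the Gauss/Iwasawa decomposition, where the adjoint action on the nilpotent part contributes the $\langle\rho,\alpha\rangle$ piece), not from any auxiliary twist. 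As written, your argument would give signature $\iota(\lambda)$ rather than $\iota(\lambda+\rho)$ and then repair the discrepancy by an ad hoc change of line bundle, which is not what the statement asserts. If you want a self-contained proof you must carry the expansion of $\log h^{\tilde L_{\lambda}}(e(Z))$ one step further to see the $\rho$-term appear; otherwise, since the paper itself is content to cite \cite[Th.\,4.17D]{Griffiths69}, simply citing it is consistent with the paper's own treatment.
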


 \noindent Therefore, Theorem \ref{8.3} applies to the calculation of the coefficient $\tilde{\bf b}_{1}$ of the
 Bergman kernel expansion for the homogeneous line bundle $\tilde{L}_{\lambda}$ from Theorem \ref{t8.4}.

\noindent
\textbf{\emph{Acknowledgements.}}
The author would like to thank Professors Xiaonan Ma and George Marinescu for their kind advices.


\begin{thebibliography}{99}
\bibitem{Atiyah76} M. F. Atiyah, Elliptic operators, discrete groups and
von Neumann algebras, Ast\'erique, 32-3 (1976) 43-72.

\bibitem{Berline04} N. Berline, E. Getzler and M. Vergne, Heat kernels and Dirac
operators, Grundlehren Text Editions, Springer-Verlag, Berline, 2004.

\bibitem{Berman} R. Berman and J. Sj\"ostrand, Asymptotics for Bergman-Hodge
kernels of high powers of complex line bundles,
Ann. Fac. Sci. Toulouse Math. (6) 16 (2007) 719-771.

\bibitem{Bismut89} J.-M. Bismut, A local index theorem for non K\"ahler manifolds,
Math. Ann. 284 (1989) 681-699.

\bibitem{Borel63} A. Borel, Compact Clifford-Klein forms of symmetric spaces,
Topology 2 (1963) 111-122.

\bibitem{Braverman99} M. Braverman, Vanishing theorems on covering manifolds,
Tel Aviv Topology Conference: Rothenberg Festschrift(1998), 1-23, Contemp. Math., 213,
Amer. Math. Soc., Providence, RI, 1999.

\bibitem{Catlin} D. Catlin, The Bergman kernel and a theorem of Tian,  in: Analysis and
 Geometry in Several Complex Variables (Katata, 1997), Trends Math., Birkh\"auser
 Boston, Boston, MA, 1999, pp. 1-23.

\bibitem{Dai04} X. Dai, K. Liu and X. Ma, On the asymptotic expansion of Bergman kernel,
J. Differential Geom. 72(2006) 1-41;
announced in C. R. Math. Acad. Sci. Paris 339 (2004) 193-198.

\bibitem{Griffiths69} P. Griffiths and W. Schmidt,
Locally homogeneous complex manifolds, Acta Math. 123 (1969), 253-301.

\bibitem{Hsiao12} Chin-Yu Hsiao,
The second coefficient of the asymptotic expansion of the weighted Bergman kernel for $(0,q)$ forms on $\mathbb{C}^n$,
arXiv:1208.3818v2.

 \bibitem{Lu00} Z. Lu, On the lower order terms of the asymptotic expansion
 of Tian-Yau-Zelditch, Amer. J. Math. 122 (2000) 235-273.

\bibitem{Ma10} X. Ma, Geometric quantization on K\"ahler and symplectic manifolds,
Proceedings of the International Congress of Mathematicians,
Volume II, 785--810, Hindustan Book Agency, New Delhi, 2010.

\bibitem{Ma06} X. Ma and G. Marinescu, The first coefficients of the
asymptotic expansion of the Bergman kernel of the Spin$^c$ Dirac
operator, Internat. J. Math. 17 (2006) 737-759.

\bibitem{Ma07} X. Ma and G. Marinescu, Holomorphic Morse
inequalities and Bergman kernels, Progr. Math. 254
(Birkh\"auser Verlag, Basel, 2007).

\bibitem{Ma08} X. Ma and G. Marinescu,  Generalized Bergman kernels
on symplectic manifolds, Adv. Math. 217 (2008) 1756-1815.

\bibitem{Ma12} X. Ma and G. Marinescu,  Berezin-Toeplitz quantization on
K\"ahler manifolds, J. Reine Angew. Math. 662 (2012) 1-56.

\bibitem{Mi} H. B. Lawson and M.-L. Michelson,  Spin geometry, Princeton Mathematical
Seris, vol. 38, Princeton Univ. Press, Princeton, NJ, 1989.

\bibitem{Ruan98} W.-D. Ruan, Canonical coordinates and Bergman metrics,
 Comm. Anal. Geom. 6 (1998) 589-631.

\bibitem{Tian90} G. Tian, On a set of polarized K\"ahler metrics on algebraic manifolds,
J. Differential Geom. 32 (1990) 99-130.

\bibitem{Wang05} X. Wang, Canonical metrics on stable vector bundles,
 Comm. Anal. Geom. 13 (2005) 253-285.

\bibitem{Zelditch98} S. Zelditch, Szeg\"o kernels and a theorem of Tian,
Int. Math. Res. Not. (6)(1998) 317-331.
\end{thebibliography}
\end{document}